\documentclass[11pt,leqno,oneside,letterpaper]{amsart}

\usepackage{amssymb,amsmath,amsfonts,latexsym, amscd, amsthm, url} 
\usepackage[letterpaper,left=1.75truein, top=1truein]{geometry}
\usepackage{color}
\usepackage[colorlinks,linkcolor=blue,citecolor=blue]{hyperref}

\usepackage[all]{xy}
\usepackage{caption,subcaption,graphicx,epsfig}
\usepackage{tikz-cd,calc}
\usepackage{epsfig}
\usepackage{graphicx}
\usepackage{tikz-cd}
\usepackage{array}
\usepackage{float}
\usepackage{mathtools}
\bibliographystyle{alpha}
\usetikzlibrary{knots}
\usetikzlibrary{decorations.markings}
\definecolor{light-gray}{gray}{0.92}
\definecolor{ultra-light-gray}{gray}{0.97}

\def\s{\sigma}
\def\r{\rho}

\def\a{\alpha}

\def\Z{\mathbb{Z}}

\def\S^1{\mathbb{S}^1}
\def\N{\mathbb{N}} 
\def\PSL{\mathrm{PSL}}
\def\S^3{\mathbb{S}^3}
\def\S^2{\mathbb{S}^2}
\def\D^2{\mathbb{D}^2}
\def\P^2{\mathbb{P}^2}

\graphicspath{{images/}}

%%% math environments %%% 
\newtheorem{theorem}{Theorem}[section]
\newtheorem{lemma}[theorem]{Lemma}
\newtheorem{proposition}[theorem]{Proposition}

\newtheorem{corollary}[theorem]{Corollary} 

\theoremstyle{definition}
\newtheorem{remark}[theorem]{Remark}

\newtheorem{question}[theorem]{Question} 
 
\newtheorem{construction}[theorem]{Construction} 
\newtheorem{conjecture}[theorem]{Conjecture}

\newtheoremstyle{cases}
  {12pt plus 6 pt}%       Space above
  {2pt}%       Space below
  {\bfseries}   %       Body font
  {}%          Indent amount (empty = no indent, \parindent = para indent)
  {\bfseries}% Thm head font
  {.}%         Punctuation after thm head
  {.5em}%      Space after thm head: " " = normal interword space;
        %      \newline = linebreak
  {}%          Thm head spec (can be left empty, meaning `normal')
\theoremstyle{cases}

\numberwithin{subcase}{case} \numberwithin{subsubcase}{subcase}
\numberwithin{equation}{subsection}

%%% Normal size fonts and pages %%%
\textwidth=6.25in \textheight=9in
\addtolength{\evensidemargin}{-.75in}
\addtolength{\oddsidemargin}{-.75in}

\parindent=0pt
\parskip=8pt

%%% New commands %%%

\def\s{\sigma}

\renewcommand{\mod}{\operatorname{mod}}

\begin{document}

\title{Circular orderability of 3-manifold groups}

\author[Idrissa Ba]{Idrissa Ba}
\address{Department of Mathematics\\
University of Manitoba \\
Winnipeg \\
MB Canada R3T 2N2} \email{ba162006@yahoo.fr}
\urladdr{ https://server.math.umanitoba.ca/~idrissa/}

\author[Adam Clay]{Adam Clay}
\address{Department of Mathematics\\
University of Manitoba \\
Winnipeg \\
MB Canada R3T 2N2} \email{Adam.Clay@umanitoba.ca}
\urladdr{http://server.math.umanitoba.ca/~claya/}

 \subjclass[2010]{Primary: 57M60, 57M50, 03C15, 06F15, 20F60.}
  \keywords{$3$-manifolds, graph manifolds, circularly orderable group, left-orderable group}
  \thanks{Idrissa Ba was supported by a University of Manitoba postdoctoral fellowship.}
  \thanks{Adam Clay was supported by NSERC grant RGPIN-2020-05343.}

\begin{abstract}
This paper initiates the study of circular orderability of $3$-manifold groups, motivated by the L-space conjecture.  We show that a compact, connected, $\P^2$-irreducible $3$-manifold has a circularly orderable fundamental group if and only if there exists a finite cyclic cover with left-orderable fundamental group, which naturally leads to a ``circular orderability version" of the L-space conjecture.  We also show that the fundamental groups of almost all graph manifolds are circularly orderable, and contrast the behaviour of circularly orderability and left-orderability with respect to the operations of Dehn surgery and taking cyclic branched covers.  
\end{abstract}

\maketitle

\section{introduction}

For an irreducible, rational homology $3$-sphere $M$, the L-space conjecture posits a relationship between the properties of $M$ admitting a coorientable taut foliation, $M$ being \textit{not} an L-space, and $M$ having a left-orderable fundamental group (see Conjecture \ref{lspace}).   While this conjecture is known to hold for some classes of manifolds, for example graph manifolds, new techniques are needed to tackle more general classes of manifolds, or indeed, to tackle the conjecture in full generality.

With this conjecture in mind, several of the most successful techniques developed in recent years to tackle left-orderability of $\pi_1(M)$ have all shared a common theme---they all begin with an action on the circle, and use cohomological techniques to pass to an action on the real line.   For instance, in studying manifolds arising from Dehn surgery on a knot $K$ in $\mathbb{S}^3$, a common technique is to study one-parameter families of representations $\rho_t : \pi_1(\mathbb{S}^3 \setminus K) \rightarrow \mathrm{PSL}(2, \mathbb{R})$ that are built so as to provide representations that factor through the quotient groups $\pi_1(\mathbb{S}^3_{p/q}(K))$ for certain values of $p/q \in \mathbb{Q} \cup \{ \infty\}$.  Controlling the Euler classes of these representations allows one to construct lifts $\widetilde{\rho_t} : \pi_1(\mathbb{S}^3_{p/q}(K)) \rightarrow \widetilde{\mathrm{PSL}}(2, \mathbb{R})$, and these lifts show that $\pi_1(\mathbb{S}^3_{p/q}(K))$ is left-orderable since they have left-orderable image \cite{BGW,HT1,HT2,MT,CuD,Ga}.   This technique has also been used to study left-orderability of cyclic branched covers of knots \cite{Hu, Tr, Tu, GL}.

In a similar vein, if one starts with an irreducible rational homology $3$-sphere $M$ admitting a coorientable taut foliation $\mathcal{F}$, Thurston's universal circle construction yields a representation $\rho_{univ} : \pi_1(M) \rightarrow \mathrm{Homeo}_+(\mathbb{S}^1)$.  With appropriate restrictions on $\mathcal{F}$, one can control the Euler class of $\rho_{univ}$ and guarantee the existence of a lift $\widetilde{\rho_{univ}} : \pi_1(M) \rightarrow \widetilde{\mathrm{Homeo}_+}(\mathbb{S}^1)$, again yielding left-orderability of $\pi_1(M)$ for similar reasons \cite{CaD,BH}.

Motivated by the utility of actions on the circle by homeomorphisms in addressing the L-space conjecture, this work is a first step in directly addressing the question of when the fundamental group of a $3$-manifold acts on the circle by homeomorphisms---though we take an algebraic approach to the problem.  Just as the existence of left-ordering of $\pi_1(M)$ captures whether or not there is an embedding $\rho : \pi_1(M) \rightarrow \mathrm{Homeo}_+(\mathbb{R})$, we approach the problem by studying the existence of an orientation cocycle $c: \pi_1(M)^3 \rightarrow \{ \pm 1, 0\}$ that is compatible with the group operation, called a circular ordering of $\pi_1(M)$.  The existence of such a map determines whether or not there exists an embedding $\rho : \pi_1(M) \rightarrow \mathrm{Homeo}_+(\mathbb{S}^1)$, analogous to the case of left-orderings.  It should be noted that throughout the manuscript, we adopt the convention that the trivial group is left-orderable.   We show:

\begin{theorem}
\label{LO_covers}
Suppose that $M$ is a compact, connected, $\P^2$-irreducible $3$-manifold.  Then $\pi_1(M)$ is circularly orderable if and only if $M$ admits a finite cyclic cover with left-orderable fundamental group.
\end{theorem}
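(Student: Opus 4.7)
I would prove the two directions separately. The $(\Leftarrow)$ direction reduces to a general construction in orderable group theory, while the $(\Rightarrow)$ direction requires cohomological input that uses the three-manifold hypothesis.

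\textbf{Direction $(\Leftarrow)$.} Given a finite cyclic cover $p \colon \widetilde{M} \to M$ with $N := p_*(\pi_1(\widetilde{M})) \triangleleft G := \pi_1(M)$ left-orderable and $Q := G/N$ cyclic of order $n$, I would construct a faithful homomorphism $G \hookrightarrow \mathrm{Homeo}_+(\mathbb{S}^1)$. Start with the faithful order-preserving action $N \hookrightarrow \mathrm{Homeo}_+(\mathbb{R})$ furnished by the left-ordering; using a choice of coset representatives, form the induced $G$-action on $Q \times \mathbb{R}$; realize this space as $\mathbb{S}^1$ minus $n$ points, embedding each copy of $\mathbb{R}$ as an open arc of length $1/n$; and extend continuously to the $n$ boundary points, on which $Q$ acts by cyclic rotation. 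Faithfulness of the action on arcs (from faithfulness of the $N$-action on $\mathbb{R}$ together with the fact that $Q$ acts nontrivially on the set of arcs) gives a faithful action of $G$ on $\mathbb{S}^1$ and hence a circular ordering.

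\textbf{Direction $(\Rightarrow)$.} Here I aim to produce a normal subgroup $N \triangleleft G := \pi_1(M)$ of finite cyclic index that is left-orderable; the cover of $M$ associated to $N$ will then be the desired cyclic cover. The plan is cohomological: a circular ordering on $G$ is equivalent to a bounded inhomogeneous $2$-cocycle $c \colon G \times G \to \{0, 1\}$, and $G$ is left-orderable precisely when the Euler class $[c] \in H^2(G;\mathbb{Z})$ vanishes. If one can arrange $[c]$ to have finite order $n$, then $nc = \delta f$ for some $f \colon G \to \mathbb{Z}$, and the reduction $\overline{f} \colon G \to \mathbb{Z}/n\mathbb{Z}$ is a homomorphism whose kernel $N$ satisfies $c|_N = \delta(f|_N/n)$. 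Thus the restricted circular ordering on $N$ has vanishing Euler class, so $N$ is left-orderable, while $G/N$ is cyclic of order dividing $n$, giving the cyclic cover.

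\textbf{Main obstacle.} The crux is therefore to exhibit some circular ordering on $\pi_1(M)$ whose Euler class is torsion in $H^2(G;\mathbb{Z})$. If $\pi_1(M)$ is finite, circular orderability forces cyclicity, and the universal cover suffices. When $\pi_1(M)$ is infinite, $\P^2$-irreducibility makes $M$ aspherical, so $H^2(G;\mathbb{Z}) \cong H^2(M;\mathbb{Z})$, and when $H_1(M;\mathbb{Z})$ is torsion (for instance when $M$ is a rational homology sphere) the Euler class is automatically torsion and the preceding argument goes through. The genuinely delicate case is when $H_1(M;\mathbb{Z})$ has a nontrivial free summand, since by Poincar\'e--Lefschetz duality a non-torsion Euler class survives under restriction to any finite-index subgroup. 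I expect this to be resolved by a case analysis along the JSJ or geometric decomposition of $M$, constructing a circular ordering by assembling left-orderings on the pieces and confining any rotation to finite-order quotients along the decomposition tori, so that the resulting Euler class lies in the torsion part of $H^2(G;\mathbb{Z})$. This construction is the step I anticipate being the most technically delicate.
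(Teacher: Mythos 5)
Your overall architecture matches the paper's: the backward direction is the lexicographic/induced-action construction (the paper's Proposition \ref{sesprop}(2), applied to $1 \to p_*(\pi_1(\widetilde{M})) \to \pi_1(M) \to \mathbb{Z}/k\mathbb{Z} \to 1$), and the forward direction is the paper's Theorem \ref{eulerclassorder} applied to a circular ordering whose Euler class is torsion, with asphericity and Poincar\'e duality supplying the torsion hypothesis when $H_1(M;\mathbb{Z})$ is finite. Those parts are correct as sketched.

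The genuine gap is your treatment of the case $b_1(M)>0$. You flag it as ``the most technically delicate'' step and propose to resolve it by a JSJ-type case analysis designed to force the Euler class of some carefully assembled circular ordering into the torsion of $H^2(\pi_1(M);\mathbb{Z})$. No such construction is needed, and, as you yourself observe, it could not succeed within your framework anyway, since a non-torsion Euler class restricts nontrivially to every finite-index subgroup. The point you are missing is that a finite cyclic cover includes the trivial (degree-one) cover, so in this case it suffices to know that $\pi_1(M)$ itself is left-orderable; and that is exactly the theorem of Boyer--Rolfsen--Wiest: a compact, connected, $\mathbb{P}^2$-irreducible $3$-manifold whose fundamental group surjects onto $\mathbb{Z}$ (equivalently, with $H_1(M;\mathbb{Z})$ infinite) has left-orderable fundamental group. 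With that citation both implications of the theorem become immediate when $b_1(M)>0$, and your remaining argument (finite $\pi_1$ handled by cyclicity; infinite $\pi_1$ with finite $H_1$ handled by asphericity, duality, and the torsion-Euler-class argument) completes the proof exactly as in the paper. One further small point to make explicit: when $H_1(M;\mathbb{Z})$ is finite you should record why $M$ is closed and orientable (an Euler characteristic argument plus $\mathbb{P}^2$-irreducibility rules out boundary components), since that is what licenses the identification $H^2(M;\mathbb{Z})\cong H_1(M;\mathbb{Z})$ you need.
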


Our contribution here is not the existence of a finite-index left-orderable subgroup, as this fact already appears implicitly in the literature in \cite{CaD}, but that there is a normal, left-orderable subgroup that yields a finite cyclic group upon passing to the quotient.  This motivates an obvious ``circular orderability" version of the L-space conjecture (see Conjecture \ref{circular_lspace}), which mirrors the usual L-space conjecture up to finite cyclic covers.

This theorem is in fact a special case of a new algebraic result. Associated to every circular ordering $c$ of $G$ is a cohomology class $[f_c] \in H^2(G; \mathbb{Z})$, called the Euler class of the circular ordering.  It happens that when a group $G$ admits a circular ordering whose Euler class has order $k$ in $H^2(G; \mathbb{Z})$, then $G$ admits a left-orderable normal subgroup $N$ such that $G/N \cong \mathbb{Z}/k\mathbb{Z}$, see Theorem \ref{eulerclassorder}.

From here we begin an exploration of exactly which fundamental groups admit circular orderings.  We first tackle the case of Seifert fibred manifolds, providing the details of a claim appearing in \cite{Cal}.  Note that circularly orderability of finite groups is well understood (a finite group is circularly orderable if and only if it is cyclic, see Proposition \ref{finite case}), and so we focus on infinite fundamental groups. If $G$ is a group with circular ordering $c$, we use $\mathrm{rot}_c(g)$ to denote the rotation number of $g \in G$, see Section \ref{intro_sect}.  

\begin{theorem}
\label{SFCO}
Let $M$ be a compact, connected Seifert fibred space and let $h$ denote the class of a regular fibre in $\pi_1(M)$.  
\begin{enumerate}
\item If $\pi_1(M)$ is infinite, then there exists a circular ordering $c$ of $\pi_1(M)$ such that $\mathrm{rot}_c(h) = 0$, in particular, $\pi_1(M)$ is circularly orderable whenever it is infinite. 
\item If $\pi_1(M)$ is infinite, $M$ is orientable and has nonorientable base orbifold,
% with at least one exceptional fibre, 
then every circular ordering $c$ of $\pi_1(M)$ satisfies $\mathrm{rot}_c(h) \in \{ 0, 1/2 \}$.
\item If $\pi_1(M)$ is left-orderable and $M$ is orientable and has base orbifold $\mathbb{S}^2(\alpha_1, \dots, \alpha_n)$ where $n\geq 3$,
% with at least one exceptional fibre, 
then for every $p \in \mathbb{N}_{>0}$ there exists a circular ordering $c$ of $\pi_1(M)$ such that $\mathrm{rot}_c(h) = 1/p$. 
\item If $M$ is orientable and has no exceptional fibres, then for every $r \in \mathbb{R}/\mathbb{Z}$ there exists a circular ordering $c$ of $\pi_1(M)$ such that $\mathrm{rot}_c(h) = r$.
\end{enumerate}
\end{theorem}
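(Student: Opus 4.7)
The plan is to address the four items in the order (2), (1), then (3) and (4), using progressively more elaborate constructions.

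For (2), the argument is essentially algebraic. The hypothesis (either $M$ nonorientable, or the base orbifold nonorientable with an exceptional fibre) produces, by inspection of the standard Seifert fibred presentations of $\pi_1(M)$, an element $v \in \pi_1(M)$ satisfying $vhv^{-1} = h^{-1}$. Rotation number in any circular ordering $c$ is a conjugation invariant and satisfies $\mathrm{rot}_c(g^{-1}) = -\mathrm{rot}_c(g)$ in $\mathbb{R}/\mathbb{Z}$, so this forces $\mathrm{rot}_c(h) = -\mathrm{rot}_c(h)$, hence $\mathrm{rot}_c(h) \in \{ 0, 1/2 \}$. The only real work is verifying the existence of $v$ for each relevant Seifert type.

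For (1), I would exploit the short exact sequence
\[
1 \to \langle h \rangle \to \pi_1(M) \to \pi_1(M)/\langle h \rangle \to 1.
\]
When $M$ is orientable with orientable base orbifold $B$, the quotient is the $2$-orbifold group $\pi_1^{\mathrm{orb}}(B)$, which is either finite (so $\pi_1(M)$ is virtually cyclic and directly circularly ordered) or an infinite Fuchsian-type group acting faithfully on $\mathbb{S}^1$ and hence circularly orderable. After reducing (under the infiniteness hypothesis) to the case where $\langle h \rangle$ is infinite cyclic, an extension theorem combining the circular ordering of $\pi_1^{\mathrm{orb}}(B)$ with the standard left ordering of $\langle h \rangle \cong \mathbb{Z}$ yields a circular ordering on $\pi_1(M)$ in which the central elements $h^n$ inherit rotation number $0$. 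For the remaining orientability cases, I would pass to a finite cover of $M$ that is orientable with orientable base, produce the circular ordering on its fundamental group as above, and then extend it along the finite-index inclusion into $\pi_1(M)$; the conclusion $\mathrm{rot}_c(h) = 0$ is compatible with the relation $vhv^{-1} = h^{-1}$ (since $0 = -0$ in $\mathbb{R}/\mathbb{Z}$) and can be arranged to persist. The main technical difficulty will be this extension across a finite-index subgroup with control on the rotation number of $h$.

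For (3) and (4), the strategy is to directly realize a prescribed rotation number for $h$, making use of Theorem \ref{eulerclassorder} which matches circular orderings with Euler class of finite order $k$ to normal left-orderable subgroups having cyclic quotient $\mathbb{Z}/k\mathbb{Z}$. In case (3), the hypothesis of left-orderability together with an exceptional fibre relation $q_j^{\alpha_j} = h^{\beta_j}$ is used to exhibit, for each $p > 0$, a surjection $\psi : \pi_1(M) \to \mathbb{Z}/p\mathbb{Z}$ with $\psi(h)$ a generator; its kernel is left-orderable as a subgroup of left-orderable $\pi_1(M)$, so Theorem \ref{eulerclassorder} applied in reverse produces a circular ordering with $\mathrm{rot}_c(h) = 1/p$. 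In case (4), $M$ is an $\mathbb{S}^1$-bundle over a surface $\Sigma$ so $\pi_1(M)$ is an extension of the circularly orderable group $\pi_1(\Sigma)$ by $\langle h \rangle$; for rational $r$ the construction mirrors (3), while for irrational $r$ I would build a faithful representation $\pi_1(M) \to \mathrm{Homeo}_+(\mathbb{S}^1)$ in which $h$ is semiconjugate to the irrational rotation by $r$, using a Denjoy-style blow-up of a dense orbit to fit in the $\pi_1(\Sigma)$-action on the complementary intervals while keeping the action faithful. I expect the irrational case of (4) to be the main obstacle, since it requires balancing the semi-conjugacy that forces $\mathrm{rot}_c(h) = r$ against the faithfulness of the resulting representation.
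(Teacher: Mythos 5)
Your part (2) matches the paper's argument exactly (conjugation-invariance of rotation number applied to the relation $vhv^{-1}=h^{-1}$) and is fine. The rest of the proposal has genuine gaps.

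The most serious one is in (3). Your plan requires, for every $p>0$, a surjection $\psi:\pi_1(M)\rightarrow \mathbb{Z}/p\mathbb{Z}$ with $\psi(h)$ a generator. No such surjection exists in general: for a Brieskorn homology sphere such as $\Sigma(2,3,7)$, which is orientable with orientable base orbifold $\mathbb{S}^2(2,3,7)$, has three exceptional fibres and left-orderable fundamental group (so is squarely covered by (3)), we have $H_1(M;\mathbb{Z})=0$, hence the only homomorphism to $\mathbb{Z}/p\mathbb{Z}$ is trivial for every $p\geq 2$. The paper's mechanism is different and does not pass through a finite cyclic quotient of $\pi_1(M)$: it first produces a left-ordering of $\pi_1(M)$ in which the central element $h$ is \emph{cofinal} (Proposition \ref{cofinal_ordering}, resting on \cite[Proposition 4.7]{BC}), and then applies Proposition \ref{cofinal_central_gives_every_r}, which circularly orders the quotient $G/\langle h^p\rangle$ (not $\mathbb{Z}/p\mathbb{Z}$) via minimal coset representatives and recovers $G$ lexicographically, yielding $\mathrm{rot}_c(h)=1/p$. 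The same objection infects the rational case of your (4): for a circle bundle of Euler number $e$ over a closed surface, $h$ is torsion of order $|e|$ in $H_1$, so your surjection exists only when $p$ divides $e$.

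In (1), the step ``produce the circular ordering on a finite orientation cover and then extend it along the finite-index inclusion into $\pi_1(M)$'' is not a valid operation: circular orderability does not ascend from finite-index subgroups (every finite group has the circularly orderable trivial subgroup of finite index), and there is no general extension theorem with control on rotation numbers. The paper avoids this entirely: for $b_1(M)>0$ it quotes left-orderability from \cite{BRW} (treating $\mathbb{P}^2\times\mathbb{S}^1$ separately), and for $b_1(M)=0$ the base orbifold is $\mathbb{S}^2(\alpha_1,\dots,\alpha_n)$ or $\mathbb{P}^2(\alpha_1,\dots,\alpha_n)$; the nonorientable-base case is handled by writing $\pi_1^{orb}(\mathbb{P}^2(\alpha_1,\dots,\alpha_n))$ as an amalgam $(\mathbb{Z}/\alpha_1\ast\cdots\ast\mathbb{Z}/\alpha_n)\ast_{\langle x\rangle=2\mathbb{Z}}\mathbb{Z}$ and invoking Propositions \ref{free product} and \ref{amalgam_prop}, not by passing to a cover. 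Also, your description of the infinite orbifold groups as ``Fuchsian-type acting faithfully on $\mathbb{S}^1$'' does not cover the Euclidean orbifolds ($\sum 1/\alpha_i=1$); the paper disposes of these by noting $\pi_1(M)$ then has infinite abelianization and is left-orderable. Finally, your Denjoy-blow-up construction for irrational $r$ in (4) is far heavier than what the paper does (a one-line lexicographic construction over a $\mathbb{Z}$ quotient equipped with the rotation-by-$r$ circular ordering) and is left unverified at exactly the delicate point you identify, so it cannot be accepted as written.
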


This leads naturally to the study of graph manifolds, where we show that an analogous fact holds.  

\begin{theorem}
\label{special graph case}
Suppose that $W$ is a graph manifold whose JSJ decomposition has Seifert fibred pieces $M_1, \dots, M_n$.  Further suppose that for each $1 \leq i \leq n$, if $\partial M_i$ is a single torus boundary component then there is no slope $\alpha \in H_1(\partial M_i ; \mathbb{Z})/\{ \pm 1\}$ such that $\pi_1(M_i(\alpha))$ is finite.  Then if $\pi_1(W)$ is infinite, it is circularly orderable.
\end{theorem}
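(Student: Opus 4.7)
The plan is to construct a circular ordering on $\pi_1(W)$ by selecting compatible circular orderings on each Seifert piece $\pi_1(M_i)$ whose restrictions to the JSJ-torus subgroups agree, using the flexibility of Theorem \ref{SFCO} to adjust fibre rotation numbers. If the direct amalgamation runs into Euler-class obstructions, I would instead pass to a finite cyclic cover in which everything trivialises and apply Theorem \ref{LO_covers}.

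First I would verify that each $\pi_1(M_i)$ is infinite, so that Theorem \ref{SFCO} applies. For pieces with two or more torus boundary components, or closed pieces, this is automatic given $\pi_1(W)$ is infinite. For a piece $M_i$ with a single torus boundary, if $\pi_1(M_i)$ were finite then every Dehn filling $M_i(\alpha)$ would also have finite fundamental group (as a quotient of $\pi_1(M_i)$), contradicting the hypothesis.

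Next I would choose for each $i$ a circular ordering $c_i$ of $\pi_1(M_i)$ using Theorem \ref{SFCO}(1), arranging $\mathrm{rot}_{c_i}(h_i) = 0$. Each JSJ torus $T_{ij} = M_i \cap M_j$ has $\pi_1(T_{ij}) \cong \mathbb{Z}^2$, and the restriction of any circular ordering on this abelian subgroup is controlled by a rotation-number homomorphism $\pi_1(T_{ij}) \to \mathbb{R}/\mathbb{Z}$. Since $\mathrm{rot}_{c_i}(h_i) = \mathrm{rot}_{c_j}(h_j) = 0$ and $h_i, h_j$ are linearly independent in $\pi_1(T_{ij})$ (by essentiality of the JSJ torus), both restricted characters vanish on the finite-index subgroup $\langle h_i \rangle + \langle h_j \rangle$, so after further fine-tuning using parts (2)--(4) of Theorem \ref{SFCO} (or by descending to a finite cyclic cover to kill any residual $\mathbb{R}/\mathbb{Z}$-discrepancy) the restrictions to each $\pi_1(T_{ij})$ can be made to coincide.

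The main obstacle is the gluing step: converting the locally compatible circular orderings into a single global one on the graph-of-groups $\pi_1(W)$. Unlike left-orderings, which amalgamate cleanly over convex abelian subgroups, circular orderings carry Euler-cocycle obstructions that must be coherent across the whole dual graph. I expect the cleanest route is to pass to a finite cyclic cover $\widetilde{W} \to W$ in which the pulled-back orderings become genuine left-orderings on each piece with the fibre as a positive cofinal element, then apply the standard amalgamation theorem for left-orderings to obtain a left-ordering of $\pi_1(\widetilde{W})$, and finally invoke Theorem \ref{LO_covers} to conclude circular orderability of $\pi_1(W)$. The hypothesis that no single-boundary piece $M_i$ admits a finite-filling slope is precisely what ensures $\pi_1(M_i)$ is left-orderable (avoiding Seifert pieces covered by $\mathbb{S}^3$ or lens spaces in the appropriate cyclic cover), and hence what makes the cover-and-amalgamate strategy close up.
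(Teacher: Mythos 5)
There is a genuine gap: your proposal never actually carries out the amalgamation, and both of the routes you sketch for it fail as stated. For the direct route, matching $\mathrm{rot}_{c_i}(h_i)=\mathrm{rot}_{c_j}(h_j)=0$ does not force the restrictions of $c_i$ and $c_j$ to a JSJ torus subgroup $\pi_1(T_{ij})\cong\mathbb{Z}\oplus\mathbb{Z}$ to coincide --- a circular ordering on $\mathbb{Z}\oplus\mathbb{Z}$ is determined by its rotation-number homomorphism only when that homomorphism is injective, and here you have arranged precisely that it is far from injective. More importantly, even if the restrictions did agree, the paper has no amalgamation theorem for circularly ordered groups over a rank-two abelian subgroup: Proposition \ref{amalgam_prop} requires the amalgamating subgroup to be cyclic, and the general criterion of \cite{CG} imposes conditions you have not verified. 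For the fallback route, ``pass to a finite cyclic cover in which everything becomes a left-ordering'' is, by Theorem \ref{LO_covers}, logically equivalent to the conclusion you are trying to prove; you give no construction of the required epimorphism $\pi_1(W)\to\mathbb{Z}/k\mathbb{Z}$ with left-orderable kernel, and producing one is exactly the hard part (left-orderability of graph manifold groups requires gluing conditions to be detected, cf.\ \cite{BC}, and is false in general for rational homology sphere graph manifolds). You have also slightly misread the role of the hypothesis: each boundary piece $M_i$ has $b_1(M_i)\geq 1$, so $\pi_1(M_i)$ is automatically infinite and left-orderable; the no-finite-filling condition is not about the pieces but about their Dehn fillings.

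The paper's proof avoids the $\mathbb{Z}\oplus\mathbb{Z}$ amalgamation problem entirely by Dehn filling. It cuts $W$ along a JSJ torus into $W_1\cup_\phi W_2$ and fills each side along (the image of) the rational longitude of the other, so that in the quotients $\pi_1(W_i(\cdot))$ the peripheral subgroup becomes cyclic; Proposition \ref{rat_rot_num} lets one prescribe an arbitrary rotation number for a dual class, so the two quotient circular orderings can be matched on that cyclic subgroup (Corollary \ref{match rot numbers}), amalgamated by Proposition \ref{amalgam_prop}, and then circular orderability of $\pi_1(W)$ is pulled back along the surjection onto this infinite amalgam via Proposition \ref{COS}(2). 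The filled manifolds are handled by induction on the number of JSJ tori (this is the content of the class $\mathcal{C}$ and Theorem \ref{graphCO}), and the hypothesis that no single-boundary piece admits a finite filling is used precisely to guarantee that the filled manifolds have infinite fundamental group, which is what Proposition \ref{COS}(2) needs. None of this machinery appears in your proposal, so the argument does not close.
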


If $W$  is not a rational homology sphere then the first Betti number $b_1(W)$ is positive, so $\pi_1(W)$ is left-orderable by \cite[Theorem 3.2]{BRW}. On the other hand if $W$ is a rational homology sphere, then Theorem \ref{special graph case} is in fact a special case of a stronger, more technical result, see Theorem \ref{graphCO} and Corollary \ref{2piecescase}.  We conjecture that with appropriate generalizations of the techniques developed here, one can prove that the fundamental group of a graph manifold is circularly orderable whenever it is infinite.  See Conjecture \ref{graph conjecture} and preceding discussion for details.

Our approach to this proof is to mirror the technique of ``slope detection'' developed in \cite{BC} for the case of left-orderings of fundamental groups of graph manifolds; we develop a result in the case of circular orderings that is analogous to the main tool of \cite{CLW}, see Theorem \ref{CO slope}. This tool provides sufficient conditions that a manifold $W = M_1 \cup_{\phi} M_2$ have circularly orderable fundamental group, by requiring that the gluing map $\phi$ identify slopes on $M_1$ and $M_2$ whose fillings yield fundamental groups admitting compatible circular orderings.  Using this technique, it turns out that in many cases it is sufficient to study fillings along rational longitudes to conclude that $W = M_1 \cup_{\phi} M_2$ has circularly orderable fundamental group, see Proposition \ref{rat_long_CO}.  

We also deal with several notable cases not covered by Theorem \ref{graphCO} or Theorem \ref{special graph case}, for instance we also show:

\begin{theorem}\label{thmSol}
The fundamental group of a compact, connected Sol manifold is circularly orderable. 
\end{theorem}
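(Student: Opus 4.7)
My plan is to invoke Theorem \ref{LO_covers}: for each compact connected Sol manifold $M$, I will exhibit a finite cyclic regular cover whose fundamental group is left-orderable. Compact Sol manifolds are aspherical with universal cover $\R^3$, so they are $\P^2$-irreducible, and Theorem \ref{LO_covers} therefore applies once such a cover is produced.

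To produce the cover, I exploit the structure of the Lie group $\mathrm{Sol} \cong \R^2 \rtimes \R$ and its isometry group. The natural projection $\mathrm{Sol} \to \R$ onto the second factor is preserved by $\mathrm{Isom}(\mathrm{Sol})$, giving a short exact sequence $1 \to N \to \mathrm{Isom}(\mathrm{Sol}) \to \mathrm{Isom}(\R) \to 1$ in which $N \cong \R^2 \rtimes (\Z/2\Z)^2$ consists of the isometries acting trivially on the base. Intersecting with $\Gamma = \pi_1(M)$ yields a short exact sequence $1 \to K \to \Gamma \to Q \to 1$ where $K = \Gamma \cap N$ and $Q$ is the image of $\Gamma$ in $\mathrm{Isom}(\R)$. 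Since $\Gamma$ is cocompact in $\mathrm{Isom}(\mathrm{Sol})$, a standard argument shows both $K$ and $Q$ are discrete and cocompact in their respective groups. Hence $K$ is a torsion-free two-dimensional crystallographic group, forcing $K \cong \Z^2$ or $K \cong \pi_1(\text{Klein bottle})$, both of which are left-orderable; similarly, $Q$ is a cocompact discrete subgroup of $\mathrm{Isom}(\R) \cong \R \rtimes \{\pm 1\}$, hence $Q \cong \Z$ or $Q \cong D_\infty$.

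If $Q \cong \Z$ then $\Gamma$ is an extension of one left-orderable group by another, hence itself left-orderable (via the lexicographic positive cone), and in particular circularly orderable. If $Q \cong D_\infty$, let $\Gamma' \triangleleft \Gamma$ be the index-two subgroup mapping onto the maximal infinite cyclic subgroup of $D_\infty$; the same extension argument shows $\Gamma'$ is left-orderable, while $\Gamma/\Gamma' \cong \Z/2\Z$ is cyclic, so Theorem \ref{LO_covers} delivers the circular orderability of $\Gamma$. The main obstacle is the identification of $N$ and the classification of its torsion-free discrete cocompact subgroups as $\Z^2$ or the Klein bottle group; this reduces to computing $\pi_0(\mathrm{Isom}(\mathrm{Sol})) \cong (\Z/2\Z)^3$ and using the fact that the only closed flat 2-manifolds are the torus and the Klein bottle.
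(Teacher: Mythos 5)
Your argument is correct in substance, but it takes a genuinely different route from the paper's. The paper leans entirely on the classification in \cite{BRW}: if $\partial M \neq \emptyset$, or $M$ is non-orientable, or $M$ is a torus bundle over $\mathbb{S}^1$, then $\pi_1(M)$ is already left-orderable by \cite[Theorem 1.7 (1)]{BRW}, and the single remaining case --- $M$ orientable and the union of two twisted $I$-bundles over the Klein bottle --- is handled purely algebraically by Proposition \ref{ibundle_prop}, which equips the two Klein bottle groups with compatible circular orderings having convex peripheral subgroups and then circularly orders the amalgam via \cite[Proposition 1.1]{CG}. You instead use the lattice structure of $\pi_1(M)$ in $\mathrm{Isom}(\mathrm{Sol})$ to produce a left-orderable normal subgroup of index at most $2$; in the paper's hard case this amounts to the observation that such a manifold is doubly covered by a torus bundle, after which Theorem \ref{LO_covers} (or directly Proposition \ref{sesprop}(2)) applies. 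Your route is more geometric and makes the finite cyclic cover of Theorem \ref{LO_covers} explicit; the paper's route avoids any structure theory of lattices and instead produces the circular ordering by hand on the amalgamated product.

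Three caveats. First, your parenthetical computation $\pi_0(\mathrm{Isom}(\mathrm{Sol})) \cong (\Z/2\Z)^3$ is wrong: the point stabilizer is dihedral of order $8$, since $(x,y,z)\mapsto(y,x,-z)$ conjugates $(x,y,z)\mapsto(-x,y,z)$ to $(x,y,z)\mapsto(x,-y,z)$, so the component group is $D_4$ and is non-abelian. This does not damage the proof, because all you actually use is the kernel $N \cong \R^2 \rtimes (\Z/2\Z)^2$ of the action on the base, and that identification is correct. Second, the theorem allows $\partial M \neq \emptyset$ (and the paper's proof explicitly treats this case), whereas your argument assumes $\Gamma$ is cocompact and so only covers closed $M$; the boundary case is easy (positive first Betti number, or \cite[Theorem 1.7 (1)]{BRW}), but a sentence is needed. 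Third, the ``standard argument'' that $Q$ is discrete and $K$ is cocompact in $N$ is really the statement that a lattice in $\mathrm{Sol}$ meets the nilradical $\R^2$ in a lattice of $\R^2$; this is true but is the one non-formal input of your proof and should be attributed (Mostow, or Raghunathan's book on discrete subgroups of Lie groups).
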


We close with a discussion of circularly orderability of fundamental groups of hyperbolic 3-manifolds. There is a well known example of a hyperbolic 3-manifold whose fundamental group is not circularly orderable, which is the Weeks manifold \cite[Theorem 9]{CaD}. Therefore, we cannot expect the fundamental groups of hyperbolic 3-manifolds to be circularly orderable whenever they are infinite, as in the case of Seifert fibred manifolds.  

Two approaches to the question of left-orderability of fundamental groups of hyperbolic $3$-manifolds that have enjoyed success are via cyclic branched covers, and the other via Dehn surgery.  In both of these cases, advancements in Heegaard-Floer techniques have provided guidance as to the expected behaviour of left-orderability with respect to these constructions.  Over the course of several examples, including several infinite families of hyperbolic $3$-manifolds having circularly orderable but non-left-orderable fundmental groups, we find that none of the behaviour exhibited by left-orderability with respect to these familiar topological constructions is shared with circular orderability.   

For example, it is suspected that if the $n$-fold cyclic branched cover of a knot in $\mathbb{S}^3$ has left-orderable fundamental group, then so does the $m$-fold cyclic branched cover for all $m \geq n$.  This does not hold for circular orderability, see Propositions \ref{trefoil prop} and \ref{52 prop}.  Similarly it is conjectured that the double branched cover of a quasi-alternating knot always has non-left-orderable fundamental group, but our examples show that there is no apparent relationship when left-orderability is replaced with circular orderability:  There exist alternating links (more generally quasi-alternating links) whose double branched covers have non-circularly orderable fundamental groups (see Sections \ref{sec4}), while other alternating (or quasi-alternating) links yield double branched covers with circularly orderable fundamental groups.  Similar observations hold for the behaviour of circular orderability with respect to Dehn surgery on a knot in $\mathbb{S}^3$.

We organize this paper as follows: Section \ref{intro_sect} contains background and results relating to circular orderability and left-orderability of groups in general. In Section \ref{3-manifold facts} we relate these facts to $3$-manifold fundamental groups, discuss the L-space conjecture and prove Theorem \ref{LO_covers}.  In Section \ref{sec1} we introduce our tools that are analogous to slope detection by left-orderings, and in Section \ref{rat_long} we show how these results can be applied to fillings along rational longitudes.  In Section \ref{seifert and graph} we study circularly orderability of the fundamental groups of  Seifert fibred manifolds and graph manifolds.  Finally, in Section \ref{sec3} we discuss circularly orderability of the fundamental groups of manifolds arising as the cyclic branched covers of links, and manifolds arising from Dehn surgery.

\section{Left and circular orderability}
\label{intro_sect}

A strict total order $<$ on a group $G$ is said to be a {\it left-ordering} if for every $f, g, h \in G$, if $g<h$ then 
$fg<fh$. A group $G$ is called {\it left-orderable} if it admits a 
left-ordering.  Every left-ordering of $G$ determines a subset $P = \{ g \in G \mid g > id \}$ called the \textit{positive cone} of the ordering, it satisfies (i) $P \cdot P \subset P$, and (ii) $P \sqcup P^{-1} = G \setminus \{id \}$.  Conversely any subset $P \subset G$ satisfying (i) and (ii) determines a left-ordering of $G$ via the prescription 
\[ g<h \Longleftrightarrow g^{-1}h \in P.
\]

A {\it left-circular ordering} of a group $G$ is a map $c: G^3 \rightarrow \{ \pm 1, 0\}$ satisfying:
	\begin{enumerate}
		\item If $(g_1, g_2, g_3) \in G^3$ then $c(g_1, g_2, g_3) = 0$ if and only if $\{g_1, g_2, g_3\}$ are not all distinct;
		\item For all $g_1, g_2, g_3, g_4 \in G$ we have
		\[ c(g_1, g_2, g_3) - c(g_1, g_2, g_4) + c(g_1, g_3, g_4)-c(g_2, g_3, g_4) = 0;
		\]
		and
		\item For all $g, g_1, g_2, g_3 \in G$ we have 
		\[ c(g_1, g_2, g_3) = c(gg_1, gg_2, gg_3).
		\]
	\end{enumerate}
	If $G$ admits such a map, then $G$ is called {\it left-circularly orderable}.  When no confusion will arise from doing so, we will write \emph{circular ordering} for short and \emph{circularly orderable}.

Every left-orderable group is circularly orderable, for if $<$ is a left-ordering of $G$ then we may define $c:G^3 \rightarrow \{ \pm 1, 0\}$ by $c(g_1, g_2, g_3) = \mathrm{sign}(\sigma)$ when $\{g_1, g_2, g_3\}$ are distinct and $c(g_1, g_2, g_3) =0$ otherwise; here $\sigma$ is the unique permutation such that $g_{\sigma(1)} <g_{\sigma(2)} <g_{\sigma(3)}$.  When a circular ordering $c$ of a left-orderable group $G$ arises in this way, we will say that $c$ is a \emph{secret left-ordering}.

Every circular ordering $c$ of $G$ is evidently a homogeneous cocycle.  However, from each circular ordering $c$ we can define an associated inhomogeneous cocycle $f_c :G^2 \rightarrow \{0, 1\}$ by
\[
f_c(g,h) = \begin{cases}
0 &\text{if } g=id \text{ or }h = id,\\
1 &\text{if } gh = id \text{ and } g \neq id, \\
\frac 12(1 - c(id,g,gh)) &\text{otherwise.}
\end{cases}
\]
we call $[f_c] \in H^2(G; \mathbb{Z})$ the \emph{Euler class} of the circular ordering $c$.

\begin{construction}
\label{lift_construction} \cite{Zel}
Associated to $[f_c]$ is a central extension $\widetilde{G}_c$ of $G$, which is constructed by equipping the set $\mathbb{Z} \times G$ with the operation $(a, g)(b, h) = (a+b+f_c(g,h), gh)$.\footnote{This is just an application of the standard construction associating elements of $H^2(G;\mathbb{Z})$, represented by inhomogeneous $2$-cocycles, to equivalence classes of central extensions $1 \rightarrow \mathbb{Z} \rightarrow \widetilde{G} \rightarrow G \rightarrow 1$.}  The central extension $\widetilde{G}_c$ is easily seen to be left-orderable, as one can check that the set $P = \{ (a, g) \mid a \geq 0\} \setminus \{ (0, id )\}$ defines the positive cone of a left-ordering that we denote by $<_c$.  We call $\widetilde{G}_c$ the \textit{left-ordered central extension associated to the circularly ordered group $G$ with ordering $c$}.

Recall that a subset $S$ of a left-ordered group $(G,<)$ is \textit{$<$-cofinal} if, for every $g \in G$, there exist elements $s, t \in S$ such that $s<g<t$.  An element $g \in G$ is called $<$-cofinal (or simply cofinal if the ordering is understood) whenever the cyclic subgroup $\langle g \rangle$ is $<$-cofinal as a set.  The central element $(1, id) \in \widetilde{G}_c$ is positive and cofinal in the left-ordering $<_c$ of $ \widetilde{G}_c$ and is called the \textit{canonical positive, cofinal, central element of $\widetilde{G}_c$}.
\end{construction}

\begin{construction} \cite{Zel}
\label{quotient_construction}
The above construction is reversible, in a categorical sense made precise in \cite{CG}, the basic construction is as follows.  Suppose that $G$ is a left-ordered group with ordering $<$, and there is a central element $z \in G$ which is positive and $<$-cofinal.  Then the quotient $G/\langle z \rangle$ inherits a circular ordering defined as follows.  For each $g \langle z \rangle \in G/\langle z \rangle$, define the \textit{minimal representative} $\bar{g}$ to be the unique coset representative of $g \langle z \rangle$ satisfying $id \leq \bar{g} < z$.  Then define a circular ordering $c_<$ on $G/\langle z \rangle$ by
\[ c_{<}(g_1 \langle z \rangle, g_2 \langle z \rangle, g_3 \langle z \rangle) = \mathrm{sign}(\sigma),
\]
where $\sigma$ is the unique permutation satisfying $\overline{g_{\sigma(1)}} < \overline{g_{\sigma(2)}} < \overline{g_{\sigma(3)}}$.
\end{construction}

When $G$ admits a circular ordering $c$ with $[f_c] = id \in H^2(G; \mathbb{Z})$, then $G$ is left-orderable, because the left-orderable central extension $\widetilde{G}_c$ is isomorphic to $G \times \mathbb{Z}$ (though the ordering $c$ need not be a secret left ordering for this to happen).  It happens that 
$c$ is a secret left-ordering if and only if $[f_c] = id \in H^2_b(G ; \mathbb{Z})$, where $H^2_b(G ; \mathbb{Z})$ is the second bounded cohomology group (see \cite{BCG} for details).

We also recall the notion of  \textit{rotation number} of an orientation-preserving homeomorphism $f: \mathbb{S}^1 \rightarrow \mathbb{S}^1$, which is connected to a circular ordering and the lift $\widetilde{G}_c$ as follows. For an orientation-preserving homeomorphism $f : \mathbb{S}^1 \rightarrow \mathbb{S}^1$, one may choose a preimage $\widetilde{f} \in \widetilde{\mathrm{Homeo}}_+(\mathbb{S}^1)$ of $f \in \mathrm{Homeo}_+(\mathbb{S}^1)$ and define the rotation number of $f$ to be
\[ \lim_{n \to \infty} \cfrac{\widetilde{f}^n(0)}{n} \, \mod \mathbb{Z}. 
\]

We can define rotation number for an element $g$ of a circularly ordered group $(G, c)$, following \cite{BaC}, in a similar way.  Let $z=(1, id) \in \widetilde{G}_c$ denote the cofinal, central element of $ \widetilde{G}_c$ relative to the ordering $<_c$.  Choose an element $\widetilde{g} \in \widetilde{G}_c$ such that $q(\widetilde{g}) = g$, where $q:  \widetilde{G}_c \rightarrow G$ is the quotient map.  For each $n \in \mathbb{Z}$, let $a_n$ denote the unique integer such that 
\[ z^{a_n} \leq \widetilde{g}^n < z^{a_n+1},
\] 
and define
\[ \mathrm{rot}_c(g) = \lim_{n \to \infty} \cfrac{a_n}{n} \, \mod \mathbb{Z}. \]
Note that this limit always exists by Fekete's lemma, as one can check that the sequence $\{a_n \}_{n=1}^{\infty}$ is superadditive.   Is it not difficult, though rather tedious, to show that this notion of rotation number agrees with the ``traditional definition'' if one uses the circular ordering $c$ of $G$ to create a dynamical realization $\rho_c : G \rightarrow \mathrm{Homeo}_+(\mathbb{S}^1)$ such that the circular ordering $c$ of $G$ agrees with the circular ordering of the orbit of $0$ inherited from the natural circular ordering of $\mathbb{S}^1$ (see \cite[Sections 2.2 and 2.4]{CG3}).  In particular, this implies that rotation number is invariant under conjugation, and is a homomorphism from $A \rightarrow S^1$ when restricted to any abelian subgroup $ A \subset G$.  Moreover, the induced homomorphism $A/\ker(\mathrm{rot}_c) \rightarrow \mathbb{S}^1$ is order-preserving with left-ordered kernel, see \cite[Propositions 5.3 and 6.17]{Gh} and \cite[Section 2]{CMR}.

A fundamental tool in constructing circular orderings on a given group $G$ is the lexicographic construction, which we use often throughout this work.

\begin{proposition} 
\label{sesprop}
Let \[ 1 \rightarrow K \rightarrow G \stackrel{q}{\rightarrow} H \rightarrow 1
\] be a short exact sequence of groups.
\begin{enumerate}
\item If $K$ and $H$ are left orderable then $G$ is left orderable.
\item If $K$ is left-orderable and $H$ admits a circular ordering $d$, then $G$ admits a circular ordering $c$ satisfying $\mathrm{rot}_d(q(g)) = \mathrm{rot}_c(g)$ for all $g \in G$, and whose restriction to $K$ is secretly a left-ordering.
\end{enumerate}
\end{proposition}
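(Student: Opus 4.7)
For part $(1)$ I would take the familiar lexicographic construction: with $P_K$ and $P_H$ the positive cones of the given left-orderings of $K$ and $H$, the set $P = q^{-1}(P_H) \cup P_K$ is readily checked to be a positive cone for a left-ordering of $G$, using only that $K = \ker q$ and that $P_K, P_H$ are positive cones.

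For part $(2)$, the plan is to realize $G$ as the quotient of a left-ordered group by a cofinal positive central element and then invoke Construction \ref{quotient_construction}. I form the pullback
\[ \widetilde G = \{ (g, \tilde h) \in G \times \widetilde H_d \mid q(g) = p(\tilde h) \}, \]
where $p : \widetilde H_d \to H$ is the quotient from Construction \ref{lift_construction}. The second projection gives a short exact sequence $1 \to K \to \widetilde G \to \widetilde H_d \to 1$, with $K$ embedded as $k \mapsto (k, id)$. By part $(1)$ applied to this sequence, $\widetilde G$ admits a lex left-ordering $<$ whose positive cone is $\{(g, \tilde h) : \tilde h >_{\widetilde H_d} id\} \cup \{(k, id) : k >_K id\}$. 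The first projection gives a second short exact sequence $1 \to \mathbb Z \to \widetilde G \to G \to 1$, with central $\mathbb Z$ generated by $\tilde z := (id, z)$, where $z$ is the canonical positive cofinal central element of $\widetilde H_d$. The element $\tilde z$ is positive in $<$ and cofinal: for any $(g, \tilde h) \in \widetilde G$, cofinality of $z$ in $\widetilde H_d$ supplies $n$ with $z^n > \tilde h$, hence $\tilde z^n > (g, \tilde h)$ lexicographically. Construction \ref{quotient_construction} now produces a circular ordering $c$ on $\widetilde G/\langle \tilde z\rangle$, which is canonically identified with $G$.

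For the rotation number identity, note that the central extension $\widetilde G_c$ produced from $(G, c)$ via Construction \ref{lift_construction} is canonically isomorphic to $(\widetilde G, <, \tilde z)$, since the two constructions are mutually inverse. For $g \in G$ with lift $\widetilde g = (g, \tilde h) \in \widetilde G$, the integer $a_n$ defined by $\tilde z^{a_n} \leq \widetilde g^n < \tilde z^{a_n+1}$ in the lex order differs by at most $1$ from the integer $b_n$ defined by $z^{b_n} \leq \tilde h^n < z^{b_n+1}$ in $\widetilde H_d$: the only possible discrepancy is in the degenerate case where $\tilde h^n$ equals $z^{b_n}$ or $z^{b_n+1}$ exactly, when the sign of $g^n \in K$ tips the comparison one way or the other. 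Dividing by $n$ and passing to the limit mod $\mathbb Z$ yields $\mathrm{rot}_c(g) = \mathrm{rot}_d(q(g))$.

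For the claim that $c$ restricts to a secret left-ordering on $K$, observe that the section $k \mapsto (k, id)$ realizes $K$ as a subgroup of $\widetilde G$ on which $<$ restricts to $<_K$. Given distinct $k_1, k_2, k_3 \in K$, let $k_\star$ be the $<_K$-smallest among them. Each $(k_\star^{-1} k_j, id)$ lies in $[id, \tilde z)$, being positive because $k_\star^{-1} k_j \geq_K id$ and strictly below $\tilde z$ because the second coordinate $id$ is strictly below $z$; hence $(k_\star^{-1} k_j, id)$ is the minimal representative of $k_\star^{-1} k_j \in G$ used in Construction \ref{quotient_construction}. By left-invariance of the circular ordering,
\[ c(k_1, k_2, k_3) = c(k_\star^{-1} k_1, k_\star^{-1} k_2, k_\star^{-1} k_3), \]
and the right-hand side is computed as the sign of the permutation sorting the first coordinates under $<_K$, which is exactly $c_{<_K}(k_1, k_2, k_3)$. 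The pullback $\widetilde G$ is the conceptual core of the argument, and I expect the subtlest bookkeeping to arise in this last step: one must be careful that minimal representatives of elements of $K$ sit correctly in $\widetilde G$ relative to $\tilde z$, but the explicit lex description given above makes the reduction to the fundamental interval $[id, \tilde z)$ transparent after left-translation by $k_\star^{-1}$.
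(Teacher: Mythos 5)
Your argument is correct, but it takes a genuinely different route from the paper. The paper constructs $c$ by an explicit case-by-case cocycle formula following Calegari (comparing $d(q(g_1),q(g_2),q(g_3))$ when the images are distinct, and falling back on the left-ordering of $K$ when two or three images coincide), reads off the secret-left-ordering property from the third case of that formula, and outsources the rotation-number identity entirely to the proof of Proposition 4.10 in \cite{BaC}. You instead form the fibre product of $G$ and $\widetilde{H}_d$ over $H$, lex-order it via part (1), exhibit $(id,z)$ as a positive cofinal central element, and obtain $c$ from Construction \ref{quotient_construction}; this makes the rotation-number identity an elementary estimate $|a_n-b_n|\leq 1$ and gives a clean verification of the secret left-ordering on $K$ via minimal representatives. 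The trade-off is that your route leans on the fact that Constructions \ref{lift_construction} and \ref{quotient_construction} are mutually inverse (so that $\mathrm{rot}_c$ may be computed in your $\widetilde{G}$ relative to $\tilde z$ rather than in $\widetilde{G}_c$); this is exactly the reversibility the paper asserts with a pointer to \cite{CG}, and is standard, but if you wanted a fully self-contained write-up you should record the isomorphism $(a,g\langle\tilde z\rangle)\mapsto \tilde z^{a}\bar g$ explicitly. Your approach arguably buys more: it localizes all three claimed properties of $c$ in one transparent construction, whereas the paper's proof leaves the rotation-number claim as a citation.
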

\begin{proof}
Claim (1) is a straightforward exercise and is common in the literature. Claim (2) is less common in the literature, so we outline a lexicographic construction following (\cite[Lemma 2.2.12]{Cal}) and verify that the claimed properties of the resulting circular ordering.  Suppose $K$ is equipped with the left-ordering $<$.  Define a circular ordering $c : G \rightarrow \{ 0 , \pm 1 \}$ as follows.  Given three distinct elements $g_1, g_2, g_3 \in G$:

\noindent \textbf{Case 1.} If $q(g_i)$ are all distinct, set $c(g_1, g_2, g_3) = d(q(g_1), q(g_2), q(g_3))$.

\noindent \textbf{Case 2.} If exactly two of $\{ q(g_1), q(g_2), q(g_3) \}$ are equal, we may (by cyclically permuting the arguments and relabeling if necessary) assume that $q(g_1) = q(g_2)$, in which case we declare $c(g_1, g_2, g_3) = 1$ if $g_1^{-1} g_2>id$ and $c(g_1, g_2, g_3) = -1$ otherwise.

\noindent \textbf{Case 3.} If all of $\{ q(g_1), q(g_2), q(g_3) \}$ are equal, then declare $c(g_1, g_2, g_3) = 1$ if and only if $id < g^{-1}_1g_2<g_1^{-1}g_3$, up to cyclic permutation.

Note that if $g_1, g_2, g_3 \in K$ then we define $c(g_1, g_2, g_3)$ by appealing to Case 3, in which case we see that $c(g_1, g_2, g_3) = 1$ if and only if $g_1 <g_2 <g_3$ up to cyclic permutation.  Thus the circular ordering $c$ is a secret left-ordering upon restriction to $K$.

That $\mathrm{rot}_d(q(g)) = \mathrm{rot}_c(g)$ for all $g \in G$ is proved in \cite[Proof of Proposition 4.10]{BaC}. 
\end{proof}

 Left-orderability and circularly orderability are also well-behaved with respect to free products.
 
\begin{proposition}
\label{free product}
Let $\{G_i \}_{i \in I}$ be a family of groups.  Then 
\begin{enumerate}
\item  \cite{V}
 The free product $\ast_{i \in I} G_i$ is left-orderable if and only if each group $G_i$ is left-orderable.  Moreover, if $<_i$ is a left-ordering of $G_i$ for each $i \in I$, then there exists a left-ordering of $\ast_{i \in I} G_i$ extending the orderings $<_i$.
\item \cite[Theorem 4.2]{BS}  The free product $\ast_{i \in I} G_i$ is circularly orderable if and only if each group $G_i$ is circularly orderable. Moreover, if $c_i$ is a circular ordering of $G_i$ for each $i \in I$, then there exists a circular ordering of $\ast_{i \in I} G_i$ extending the orderings $c_i$.
\end{enumerate}
\end{proposition}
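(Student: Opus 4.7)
The plan is to handle both claims in parallel, since the forward implications are immediate: each $G_i$ embeds as a subgroup of $\ast_{i \in I} G_i$, and both left-orderability and circular orderability pass to subgroups. So the real content in each part is to construct the extending ordering on the free product out of the given orderings on the factors.

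For part (1), I would give a dynamical proof. Each $(G_i, <_i)$ admits a dynamical realization as a faithful action $\rho_i : G_i \hookrightarrow \mathrm{Homeo}_+(\R)$ with no global fixed points. After rescaling, conjugate each $\rho_i$ so that its support lies in a pairwise disjoint collection of open intervals $U_i \subset \R$ chosen so that in between any two of them there is a point not moved by any $\rho_i$. The universal property of the free product yields a homomorphism $\rho : \ast_{i \in I} G_i \to \mathrm{Homeo}_+(\R)$ extending all of the $\rho_i$. A Klein ping-pong argument applied to the reduced form $x_1 x_2 \cdots x_n$ of a non-identity element of the free product shows that $\rho$ is faithful, since consecutive letters move points between distinct intervals $U_i$ and $U_j$. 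Pulling back the usual order of $\R$ along the $\rho$-orbit of a suitable basepoint produces a left-ordering of $\ast_{i\in I} G_i$ whose restriction to each $G_i$ agrees with $<_i$.

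For part (2), I would bootstrap from (1) using Constructions \ref{lift_construction} and \ref{quotient_construction}. For each $i$, let $\widetilde{G_i}_{c_i}$ be the left-ordered central extension of $(G_i, c_i)$ with canonical positive cofinal central element $z_i$. Form the central amalgamated product $\widetilde{H}$ of the $\widetilde{G_i}_{c_i}$ identifying all the $z_i$ to a single central element $z$; this fits into a central extension $1 \to \langle z \rangle \to \widetilde{H} \to \ast_{i \in I} G_i \to 1$. It suffices to produce a left-ordering of $\widetilde{H}$ in which $z$ is positive and cofinal and whose restriction to each $\widetilde{G_i}_{c_i}$ is $<_{c_i}$, because Construction \ref{quotient_construction} then delivers a circular ordering on the quotient $\ast_{i \in I} G_i$ that restricts to $c_i$ on each factor. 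To left-order $\widetilde H$ I would again pass to dynamical realizations: each $\widetilde{G_i}_{c_i}$ realizes as a group of homeomorphisms of $\R$ commuting with the integer translation $T:x\mapsto x+1$, with $z_i$ acting as $T$. Gluing these $T$-equivariant actions on a common real line, using $T$-equivariant pairwise disjoint open sets inside a single fundamental domain $[0,1)$, produces an action of $\widetilde{H}$ on $\R$ in which $z$ acts as $T$, and the induced left-ordering is as required.

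The main obstacle is the amalgamation step in (2): free products with amalgamation of left-orderable groups are not generally left-orderable, so one cannot just quote part (1) applied to $\widetilde{G_i}_{c_i}$. This is precisely why the dynamical realizations for the $\widetilde{G_i}_{c_i}$ must be arranged to be $T$-equivariant before gluing, so that the relations $z_i = z_j$ are respected by the action. Once that $T$-equivariant gluing is in hand, verifying that the induced circular ordering $c$ on $\ast_{i\in I} G_i$ satisfies $c|_{G_i}=c_i$ is routine: minimal coset representatives of $G_i \subset \widetilde{G_i}_{c_i}/\langle z_i\rangle$ agree with those of $G_i \subset \widetilde H/\langle z\rangle$, so the prescription of Construction \ref{quotient_construction} assigns the same sign to any triple from $G_i$ in both groups.
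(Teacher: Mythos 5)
There is a fatal problem with the central mechanism of your construction, and it infects both parts. (For context: the paper does not prove this proposition at all --- it simply cites Vinogradov for (1) and Baik--Samperton for (2) --- so the only question is whether your argument stands on its own.) In part (1) you place the dynamical realizations $\rho_i$ so that their supports lie in pairwise disjoint open sets $U_i$. But two homeomorphisms of $\R$ with disjoint supports commute, so the images $\rho_i(G_i)$ pairwise commute and the induced map $\rho : \ast_{i\in I} G_i \to \mathrm{Homeo}_+(\R)$ factors through the direct sum $\bigoplus_i G_i$; it is never faithful once two factors are nontrivial. The ping-pong lemma cannot rescue this: its hypothesis is that every $g \in G_i \setminus \{id\}$ maps $X_j$ \emph{into} $X_i$ for $j \neq i$, which is the opposite of $g$ fixing the complement of $U_i$ pointwise. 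Indeed, in your setup a reduced word applied to a basepoint outside all the $U_i$ fixes that basepoint, so nothing "moves between distinct intervals." The same defect appears in part (2): if the (circle) actions of the $G_i$ are arranged to be nontrivial only on disjoint $T$-invariant sets, elements from distinct factors commute modulo the center, the action of $\widetilde H$ is not faithful, and Construction \ref{quotient_construction} does not produce a circular ordering of $\ast_{i\in I} G_i$.

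The skeleton of part (2) is otherwise sound and is essentially the route taken in the literature: $\widetilde H = \ast_{\langle z_i = z_j\rangle} \widetilde{(G_i)}_{c_i}$ is a central extension of $\ast_{i\in I} G_i$ by $\langle z\rangle$, and a left-ordering of $\widetilde H$ in which $z$ is positive and cofinal and which restricts to $<_{c_i}$ on each factor would finish the proof via Construction \ref{quotient_construction}. But producing that left-ordering is exactly the hard step (it is \cite[Proposition 5.6]{CG}, invoked in the paper's Proposition \ref{amalgam_prop}), and it cannot be done by disjoint-support gluing. The standard ways to repair both parts are: (a) the Baire-category/genericity argument --- take dynamical realizations with no global fixed point, conjugate each $\rho_i$ by a homeomorphism $h_i$, and show that for a generic choice of the $h_i$ the resulting action of the free product is faithful, since for each fixed nontrivial reduced word the set of bad conjugators is closed with empty interior; or (b) the purely algebraic arguments of Vinogradov and Baik--Samperton. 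As written, your proof does not establish either part.
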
 

Free products with amalgamation are much more finicky, with necessary and sufficient conditions that a free product with amalgamation be left-orderable (resp. circularly orderable) appearing in \cite{BG} (resp. \cite{CG}).  
 
Tools to obstruct circular-orderability are somewhat rarer than the tools commonly used to obstruct left-orderability.  One of the basic tools in this regard is the following fact:

\begin{proposition}
\label{finite case}
A finite group is circularly orderable if and only if it is cyclic.
\end{proposition}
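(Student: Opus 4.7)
The plan is to handle the two directions separately.  For the ``if'' direction, observe that $\mathbb{Z}/n\mathbb{Z}$ arises as the quotient of $(\mathbb{Z}, <)$ by the positive cofinal central element $n$, so Construction \ref{quotient_construction} immediately produces an explicit circular ordering on $\mathbb{Z}/n\mathbb{Z}$---the obvious one coming from the embedding of $\mathbb{Z}/n\mathbb{Z}$ into $\mathrm{SO}(2)$ as the group of $n$-th roots of unity.

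For the ``only if'' direction, suppose $(G, c)$ is a finite circularly ordered group of order $n$.  I would argue directly from the axioms by studying the regular action of $G$ on itself.  First, by choosing any basepoint $g_0 \in G$ and invoking the cocycle condition, one checks that the relation $g < h \Longleftrightarrow c(g_0, g, h) = 1$ defines a strict total order on $G \setminus \{g_0\}$ (trichotomy follows from the alternating property of $c$, and transitivity is a direct application of the four-term cocycle identity).  This produces an enumeration $g_0, g_1, \dots, g_{n-1}$ of $G$, unique up to cyclic shift, with respect to which $c$ agrees with the natural cyclic order on $\mathbb{Z}/n\mathbb{Z}$.  Left-invariance of $c$ then says that for each $g \in G$, the left-multiplication map $\sigma_g : h \mapsto gh$ is a cyclic-order-preserving bijection of this $n$-element set.

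The key combinatorial step is that any cyclic-order-preserving bijection $\pi$ of $\mathbb{Z}/n\mathbb{Z}$ is a rotation: if $\pi(0) = k$ but $\pi(1) = k+m$ with $m \geq 2$, then setting $\ell = \pi^{-1}(k+1)$ one obtains the contradiction $c(\pi(0), \pi(1), \pi(\ell)) = -1$ while $c(0, 1, \ell) = 1$; iterating forces $\pi(i) = k+i$ for all $i$.  Granted this, $g \mapsto \sigma_g$ is a homomorphism $G \to \mathbb{Z}/n\mathbb{Z}$ which is injective (since the left regular action is faithful) and hence bijective by counting, so $G$ is cyclic.  The main obstacle is essentially just setting up the enumeration and verifying the combinatorial lemma, both of which are elementary.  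An alternative approach that bypasses the combinatorics uses Construction \ref{lift_construction}: the left-orderable central extension $\widetilde{G}_c$ is torsion-free and contains $\langle z \rangle \cong \mathbb{Z}$ as a finite-index subgroup, so $\widetilde{G}_c$ is infinite virtually cyclic and torsion-free; the classification of such groups forces $\widetilde{G}_c \cong \mathbb{Z}$, and therefore $G \cong \widetilde{G}_c / \langle z \rangle$ is cyclic.
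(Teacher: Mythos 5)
Your proof is correct, but note that the paper does not actually prove Proposition \ref{finite case} in the text: it simply cites \cite[Proposition 2.8]{CG2} for ``a proof from an algebraic point of view,'' so there is no in-paper argument to compare against. Both of your routes are sound. The first (cutting the circular order at a basepoint $g_0$, checking via the cocycle identity that $c(g_0,\cdot,\cdot)$ linearly orders $G\setminus\{g_0\}$, and then showing every cyclic-order-preserving bijection of a finite cyclically ordered set is a rotation, so that the faithful left regular representation lands injectively in $\mathbb{Z}/n\mathbb{Z}$) is elementary and self-contained; the transitivity and alternation checks you allude to do follow directly from axiom (2), and your rotation lemma is verified correctly. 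The second route is arguably more in the spirit of the paper, since it reuses exactly the machinery of Construction \ref{lift_construction}: $\widetilde{G}_c$ is left-orderable hence torsion-free, and $\langle z\rangle\cong\mathbb{Z}$ has index $|G|$ in it, so $\widetilde{G}_c$ is an infinite torsion-free virtually cyclic group and must be $\mathbb{Z}$, whence $G\cong\mathbb{Z}/|G|\mathbb{Z}$. (If you want to avoid quoting the classification of virtually cyclic groups, you can instead note that $\widetilde{G}_c$ is a central extension of a finite group by $\mathbb{Z}$, so its center has finite index; Schur's theorem then makes the commutator subgroup finite, hence trivial by torsion-freeness, and a torsion-free abelian group containing $\mathbb{Z}$ with finite index is $\mathbb{Z}$.) The only cosmetic gap is the degenerate cases $|G|\leq 2$, where there are no triples of distinct elements to order, but those groups are cyclic anyway.
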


For a proof from an algebraic point of view, see \cite[Proposition 2.8]{CG2}.  A circular ordering $c$ of $G$ may also give rise to a left-orderable subgroup of $G$ depending on whether or not the corresponding Euler class $[f_c] \in H^2(G ; \mathbb{Z})$ has finite order.  This allows one to obstruct circular-orderability of a group $G$ by reducing the problem to obstructing left-orderability of certain finite-index subgroups.  Calegari-Dunfield use a variant of this theorem, for instance, to show that the fundamental group of the Weeks manifold is not circularly orderable \cite{CaD}.  

\begin{theorem}
\label{eulerclassorder}
Suppose that $c$ is a circular ordering of $G$ whose Euler class $[f_c] \in H^2(G; \mathbb{Z})$ has order $k$.  Then $G$ contains a left-orderable normal subgroup $H$ such that $G/H \cong \mathbb{Z} / k \mathbb{Z}$.
\end{theorem}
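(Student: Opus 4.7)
The plan is to exploit the left-ordered central extension $\widetilde{G}_c$ from Construction \ref{lift_construction} together with the hypothesis that $k[f_c] = 0$. Since $[f_c]$ has order $k$ in $H^2(G;\mathbb{Z})$, there exists a $1$-cochain $\varphi : G \to \mathbb{Z}$ with $\delta \varphi = k f_c$, that is,
\[
k f_c(g,h) = \varphi(g) + \varphi(h) - \varphi(gh) \qquad \text{for all } g,h \in G,
\]
and setting $g = h = id$ together with the normalization $f_c(id,id) = 0$ forces $\varphi(id) = 0$. I would then define $\psi : \widetilde{G}_c \to \mathbb{Z}$ by $\psi(a, g) = ka + \varphi(g)$, and a direct calculation using the product $(a, g)(b, h) = (a + b + f_c(g,h), gh)$ together with the coboundary equation above shows that $\psi$ is a group homomorphism with $\psi(z) = k$, where $z = (1, id)$.

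Next, let $K := \ker \psi$. As a subgroup of the left-orderable group $\widetilde{G}_c$, $K$ is left-orderable, and it is normal in $\widetilde{G}_c$ as the kernel of a homomorphism. The intersection $K \cap \langle z \rangle$ consists of those $(a, id)$ with $ka = 0$, hence is trivial, so the restriction of the quotient map $q : \widetilde{G}_c \to G$ to $K$ is injective. Setting $H := q(K)$ then gives a subgroup of $G$ isomorphic to $K$, and therefore left-orderable; normality of $H$ in $G$ follows from normality of $K$ in $\widetilde{G}_c$ by lifting arbitrary elements of $G$ through $q$ and applying $q$ to the resulting conjugation.

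It remains to identify $G/H$ with $\mathbb{Z}/k\mathbb{Z}$. Since $\ker q = \langle z \rangle$ one has $q^{-1}(H) = K\langle z \rangle$, so $G/H \cong \widetilde{G}_c / K\langle z \rangle$, and via the isomorphism $\widetilde{G}_c/K \xrightarrow{\sim} \mathrm{Im}(\psi)$ induced by $\psi$ this quotient is identified with $\mathrm{Im}(\psi) / \psi(\langle z\rangle) = \mathrm{Im}(\psi) / k\mathbb{Z}$. The main obstacle, and the only subtle step, is verifying that $\mathrm{Im}(\psi) = \mathbb{Z}$ rather than some proper subgroup $d\mathbb{Z}$. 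I would handle this by observing that if $d$ divides every value of $\psi$, then in particular $d \mid k$ and $d \mid \varphi(g)$ for every $g \in G$, so $\varphi/d$ is an integer-valued cochain satisfying $\delta(\varphi/d) = (k/d) f_c$; this means $(k/d)[f_c] = 0$, and minimality of $k$ as the order of $[f_c]$ forces $d = 1$. Hence $\mathrm{Im}(\psi) = \mathbb{Z}$, and $G/H \cong \mathbb{Z}/k\mathbb{Z}$, completing the proof.
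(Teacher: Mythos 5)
Your proof is correct and follows essentially the same route as the paper: your map $\psi(a,g) = ka + \varphi(g)$ is precisely the composite of the paper's maps $\widetilde{G}_c \to \widetilde{G}_{kf_c} \to \mathbb{Z}\times G$ followed by projection to the $\mathbb{Z}$ factor, your $H$ coincides with the paper's subgroup $\{g \mid \varphi(g)\in k\mathbb{Z}\}$, and your surjectivity argument via minimality of the order $k$ is the same as theirs. The only difference is that you package the construction as a single homomorphism to $\mathbb{Z}$ rather than passing through the auxiliary extension, which is a modest streamlining.
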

\begin{proof}
 Consider the cocycle $kf_c : G^2 \rightarrow \{0, k\}$ defined by taking $k$ times the inhomogeneous cocycle $f_c$, and the corresponding central extension $\widetilde{G}_{kf_c}$ constructed as $\mathbb{Z} \times G$ with multiplication $(a,g)(b,h) = (a+b+kf_c(g,h),gh)$.    Define a map $\phi:\widetilde{G}_c \rightarrow \widetilde{G}_{kf_c}$ by $\phi(a,g) = (ka, g)$, one can check that this is an injective homomorphism.
 Since we assume $[f_c]$ has order $k$, we know that $[kf_c] = id \in H^2(G;\mathbb{Z})$, and thus there exists a map $\eta:G \rightarrow \mathbb{Z}$ satisfying $\eta(id) = 0$ and $kf_c(g,h) = \eta(g)-\eta(gh)+\eta(h)$ for all $g, h \in G$.  This map $\eta$ allows us to define an isomorphism $\psi :  \widetilde{G}_{kf_c} \rightarrow \mathbb{Z} \times G$ by $\psi(a,g) = (a +\eta(g), g)$.  
 
 Let $H$ denote the subgroup $(\psi \circ \phi )(\widetilde{G}_c) \cap (\{0\} \times G)$ of $\{0 \} \times G$, meaning that
 \[ H = \{ (0, g) \mid \exists a \in \mathbb{Z} \mbox{ such that } ka + \eta(g) =0 \}.
 \]
Then $H$ is clearly left-orderable since it is image under an injective map of a left-orderable group.   

Let $q_k : \mathbb{Z} \rightarrow \mathbb{Z}/k \mathbb{Z}$ denote the quotient map.  To the equation $\eta(gh) =  \eta(g)+\eta(h) -  kf_c(g,h)$ we apply the homomorphism $q_k$ to arrive at $(q_k \circ \eta)(gh) = (q_k \circ \eta)(g) + (q_k \circ \eta)(h)$ and conclude that $q_k \circ \eta : G \rightarrow \mathbb{Z}/k\mathbb{Z}$ is a homomorphism. Moreover, $g \in \ker(q_k \circ \eta)$ if and only if $\eta(g) \equiv 0 \mod k$, meaning that $(0,g) \in H$.  Thus the obvious isomorphism $\{0\} \times G \cong G$ carries $H$ to $\ker(q_k \circ \eta)$.  

As $q_k \circ \eta$ has image in $\mathbb{Z}/k\mathbb{Z}$, it remains to argue that $q_k \circ \eta$ is surjective.  If not, say the order of $im(q_k \circ \eta)$ is $m$, then $m$ divides $k$ and $\frac{k}{m}$ is the largest divisor of $k$ such that $\eta(g) \in \frac{k}{m}\mathbb{Z}$ for all $g \in G$.  This means that the function $\zeta(g) = \frac{m}{k}\eta(g)$ satisfies $\zeta(g) -\zeta(gh) + \zeta(h) = mf_c(g,h)$, meaning that $[f_c] \in H^2(G; \mathbb{Z})$ has order dividing $m$.   But $k \leq m$ and the order of $[f_c]$ is $k$, so this forces $m=k$ implying that $q_k \circ \eta$ is surjective.
\end{proof}

\section{Fundamental groups of $3$-manifolds and orderability}
\label{3-manifold facts}

First, we note that every compact $3$-manifold other than $\mathbb{S}^3$ admits a decomposition
\[ M \cong M_1 \# M_2 \# \dots \# M_n
\]
into prime $3$-manifolds, and as such, the fundamental group of a non-prime $3$-manifold $M$ can be expressed as a free product
\[ \pi_1(M) \cong \pi_1(M_1) * \pi_1(M_2) * \dots * \pi_1(M_n).
\]
In light of Proposition \ref{free product}(2), the question of circular orderability of fundamental groups of $3$-manifolds reduces to considering the fundamental groups of prime $3$-manifolds. In fact, since the only reducible orientable prime $3$-manifold is $\mathbb{S}^1 \times \mathbb{S}^2$, whose fundamental group is clearly circularly orderable, in the case of orientable $3$-manifolds it suffices to consider only the fundamental groups of irreducible orientable $3$-manifolds.  In the case of a nonorientable $3$-manifold $M$, the first Betti number is positive whenever $M$ is $\P^2$-irreducible, so $M$ has circularly orderable fundamental group \cite[Theorem 1.1]{BRW}.  
\begin{remark}
\label{P2 troubles}
On the other hand, if $M$ is $\P^2$-reducible, then we do not know whether or not $\pi_1(M)$ is circularly orderable in general.  Our techniques for producing circular orderings of $3$-manifold groups make frequent use of   algebraic properties that depend heavily on the situation at hand.  For example, our main tools use left-orderability of infinite index subgroups of $\pi_1(M)$ as in Proposition \ref{COS}, or a decomposition of $\pi_1(M)$ as a free product with amalgamation of groups whose circular orderings are well understood as in Theorem \ref{CO slope} (whose proof also uses Proposition \ref{COS}).  Neither of these properties hold for the fundamental groups of $\P^2$-reducible manifolds, in particular their fundamental groups always contain a subgroup isomorphic to $\mathbb{Z}/2 \mathbb{Z}$ \cite[Theorem 8.2]{Ep}, and so any argument that relies on left-orderability of infinite index subgroups will fail.
\end{remark}

It is well-known that left-orderability behaves in a very special way with respect to fundamental groups of irreducible $3$-manifolds.  One of the key results in the area is the following, which we present alongside a generalization to the case of circular orderability. 

\begin{proposition}\label{COS}
Let $M$ be a compact, connected, $\P^2$-irreducible $3$-manifold, let $G$ be a nontrivial group and suppose there exists an epimorphism $\r:\pi_1(M)\longrightarrow G$.
\begin{enumerate}
\item If $G$ is left-orderable then $\pi_1(M)$ is left-orderable.
 \item If $G$ is infinite and circularly orderable, then $\pi_1(M)$ is circularly orderable.
 \end{enumerate}
\end{proposition}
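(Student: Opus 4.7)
The plan for part (1) is to invoke the theorem of Boyer--Rolfsen--Wiest directly, since this is precisely their main theorem for $\mathbb{P}^2$-irreducible $3$-manifold groups.

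For part (2), the strategy is to feed the short exact sequence
\[ 1 \longrightarrow K \longrightarrow \pi_1(M) \xrightarrow{\;\rho\;} G \longrightarrow 1 \]
with $K = \ker \rho$ into Proposition \ref{sesprop}(2). Since $G$ is circularly orderable by hypothesis, once we know that $K$ is left-orderable, the desired circular ordering on $\pi_1(M)$ is produced by the lexicographic construction of that proposition. Thus the entire task reduces to exhibiting a left-ordering on the (possibly infinitely generated) subgroup $K$.

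To left-order $K$, I would exploit the fact that left-orderability is a local property of groups and reduce to showing that every nontrivial finitely generated subgroup $H \leq K$ is left-orderable. Such an $H$ corresponds to a cover $\widetilde{M}_H \to M$ with $\pi_1(\widetilde{M}_H) \cong H$; since $G$ is infinite, $H$ has infinite index in $\pi_1(M)$, so $\widetilde{M}_H$ is non-compact, and since $\mathbb{P}^2$-irreducibility passes to covers, $\widetilde{M}_H$ is $\mathbb{P}^2$-irreducible. Scott's compact core theorem then produces a compact $\mathbb{P}^2$-irreducible submanifold $C \subset \widetilde{M}_H$ whose inclusion induces an isomorphism $\pi_1(C) \xrightarrow{\cong} H$; by capping off any sphere boundary components (using irreducibility) we may assume each component of $\partial C$ has positive genus. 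Because $\widetilde{M}_H$ is non-compact and $H$ is nontrivial, $\partial C$ is non-empty, and the half-lives-half-dies theorem forces $b_1(C) > 0$. Hence there is an epimorphism $H = \pi_1(C) \twoheadrightarrow \mathbb{Z}$, and part (1) applied to the compact $\mathbb{P}^2$-irreducible manifold $C$ shows that $H$ is left-orderable.

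I expect the main obstacle to lie in this intermediate step---establishing the left-orderability of $K$---since this is where the $3$-manifold topology enters the argument (passage to the covering $\widetilde{M}_H$, Scott's compact core theorem, irreducibility used to remove sphere boundary components, and half-lives-half-dies to force positive first Betti number). With $K$ left-orderable in hand, the conclusion is immediate from Proposition \ref{sesprop}(2).
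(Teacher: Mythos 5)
Your argument is, in outline, exactly the paper's proof: part (1) is the citation of Boyer--Rolfsen--Wiest, and for part (2) the paper likewise feeds $1 \to K \to \pi_1(M) \to G \to 1$ into Proposition \ref{sesprop}(2), quoting the proof of \cite[Theorem 3.2]{BRW} for the fact that the infinite-index normal subgroup $K$ is locally indicable and hence left-orderable --- and the compact-core argument you spell out is essentially that cited proof. The one soft spot is your final step: Scott's theorem does not guarantee that the compact core $C$ is $\mathbb{P}^2$-irreducible (an essential sphere in the interior of $C$ bounds a ball in $\widetilde{M}_H$ that need not lie in $C$), so applying part (1) to $C$ is not justified as written. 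This is easily repaired and costs you nothing: what your argument actually establishes is that every nontrivial finitely generated subgroup of $K$ surjects onto $\mathbb{Z}$ (only $\partial C \neq \emptyset$ with no sphere or projective plane components is needed for the Euler characteristic, or half-lives-half-dies, count), i.e.\ $K$ is locally indicable, and the Burns--Hale criterion then yields left-orderability of $K$ directly, without having to left-order each subgroup $H$ separately via part (1).
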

\begin{proof} The first claim is \cite[Theorem 3.2]{BRW}.  For the second, since $G=\r(\pi_1(M))$ is infinite then $K={\rm ker}(\r)$ is an infinite index normal subgroup of $\pi_1(M)$. From \cite[Proof of Theorem 3.2]{BRW}, the subgroup $K$ is therefore locally indicable and thus left-orderable. This means we can use the short exact sequence $1\longrightarrow K\longrightarrow \pi_1(M)\longrightarrow G\longrightarrow 1$ to construct a lexicographic circular ordering of $\pi_1(M)$ as in Proposition \ref{sesprop}(2).
\end{proof}

This implies, for instance, that $\pi_1(M)$ is left-orderable whenever $M$ satisfies the hypotheses of Theorem \ref{COS} and $H_1(M; \mathbb{Z})$ is infinite.  The case of interest is therefore when $H_1(M; \mathbb{Z})$ is finite, where the L-space conjecture posits a connection between left-orderability of $\pi_1(M)$, the existence of co-orientable taut foliations in $M$, and whether or not $M$ is a Heegaard-Floer homology L-space (that is, a manifold whose Heegaard-Floer homology is of minimal rank).

\begin{conjecture}[The L-space conjecture, \cite{BGW, Ju}]  If $M$ is an irreducible, rational homology $3$-sphere other than $\mathbb{S}^3$, then the following are equivalent:\footnote{We require the caveat that $M \neq \mathbb{S}^3$ because by our definition, the trivial group is left-orderable.}
\label{lspace}
\begin{enumerate}
\item The fundamental group of $M$ is left-orderable.
\item The manifold $M$ supports a coorientable taut foliation.
\item The manifold $M$ is not an L-space.
\end{enumerate}
\end{conjecture}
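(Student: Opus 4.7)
The L-space conjecture is a famous open problem, so I can only sketch the approach one would take to establish individual implications or special cases, not a complete proof. By transitivity it is enough to close a cycle of three implications, and the natural choice is $(2)\Rightarrow(3)\Rightarrow(1)\Rightarrow(2)$.

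The implication $(2)\Rightarrow(3)$ is the only direction that is currently understood in full generality, so I would treat it as input rather than something to reprove. The template, due to Ozsv\'ath--Szab\'o and extended by Bowden and by Kazez--Roberts, is to perturb a coorientable taut foliation (via Eliashberg--Thurston) into a pair of weakly symplectically fillable contact structures whose Heegaard Floer contact invariants are nonzero; combined with Ozsv\'ath--Szab\'o's characterization of L-spaces as having $\widehat{HF}$ of rank exactly $|H_1(M;\mathbb{Z})|$, this forces $M$ to not be an L-space.

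For the remaining directions my plan would be to argue class by class rather than monolithically. For graph manifolds I would mirror the slope-detection machinery of \cite{BC}: each JSJ piece carries three families of ``detected'' slopes, one coming from left-orderability, one from coorientable taut foliations, and one from the non-L-space condition, and one shows that the three families coincide and that detection is preserved by the JSJ gluing maps. The circular-orderability analogue of this program is exactly Theorem \ref{special graph case} of the present paper, and I would take it as the model. For Dehn surgery on a hyperbolic knot $K\subset\S^3$ the plan is to deform one-parameter families $\r_t:\pi_1(\S^3\setminus K)\rightarrow\PSL(2,\R)$, control their Euler classes so as to lift them to $\widetilde{\PSL}(2,\R)$ on the appropriate filled quotients, and independently construct coorientable taut foliations by branched-surface techniques, as in \cite{HT1,HT2,CaD,BH}.

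The main obstacle, and the reason the conjecture is genuinely hard, is structural: condition $(3)$ is defined through Heegaard Floer homology and has no known intrinsic topological or group-theoretic reformulation, whereas $(1)$ and $(2)$ are algebraic and topological in nature. A proof in full generality would seem to require either a new construction taking $\widehat{HF}(M)$ as input and producing a coorientable taut foliation of $M$, or a direct extraction of left-orderings of $\pi_1(M)$ from Heegaard Floer data. Neither mechanism is currently available, and this is the bridging step where I would expect any serious attempt to break down.
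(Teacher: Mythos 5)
This statement is a conjecture, not a theorem: the paper states it (attributing it to \cite{BGW, Ju}) without any proof, and it remains a well-known open problem. You correctly recognize this, and your summary of the one known implication (taut foliation implies non-L-space) and of the partial results via slope detection and representation-theoretic lifts is consistent with the literature the paper cites, so there is nothing to compare against and nothing further to add.
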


Surrounding this conjecture, there are many tools and techniques to obstruct left-orderability of fundamental groups, see \cite{CaD, DPT, CW, Ba} to name a few, and many more to prove that fundamental groups are left-orderable.  Our main contribution in this section is to connect circular orderability to this conjecture as in Theorem \ref{LO_covers} of the introduction, by applying Theorem \ref{eulerclassorder} and Proposition \ref{COS}.  Recall that a \textit{finite cyclic cover} is, by definition, a regular covering space for which the group of deck transformations is a finite cyclic group.

\begin{proof}[Proof of Theorem \ref{LO_covers}] If $H_1(M; \mathbb{Z})$ is infinite, then there exists a surjection $\pi_1(M) \rightarrow \mathbb{Z}$.  Therefore $\pi_1(M)$ is left-orderable by \cite[Theorem 1.1]{BRW}.  It follows that $\pi_1(M)$ is circularly orderable, and all finite cyclic covers (including the trivial cover) have left-orderable fundamental group, so there is nothing to prove in this case.

On the other hand, suppose $H_1(M; \mathbb{Z})$ is finite and $\pi_1(M)$ is circularly orderable with circular ordering $c$.  Note that by a standard Euler characteristic argument (see for example \cite[Lemma 3.3]{BRW}) $M$ is either closed and orientable, or $M$ has nonempty boundary containing only $\mathbb{S}^2$ and $\mathbb{P}^2$ components.  Since $M$ is $\P^2$-irreducible the latter case does not occur.

We conclude $H_1(M;\mathbb{Z}) \cong H^2(M; \mathbb{Z})$ by Poincar\'{e} duality, and then $H^2(M; \mathbb{Z}) \cong H^2(\pi_1(M); \mathbb{Z})$ since $M$ is irreducible.  Thus $[f_c]$ has finite order, say it has order $k$.  Then by Theorem \ref{eulerclassorder}, $\pi_1(M)$ admits a normal, left-orderable subgroup $H$ such that $\pi_1(M)/H$ is cyclic.  In this case, the cover $\widetilde{M}$ of $M$ with $\pi_1(\widetilde{M}) =H$ has the desired properties.

On the other hand, suppose that $p: \widetilde{M} \rightarrow M$ is a finite cyclic cover with left-orderable fundamental group.  Then there is a short exact sequence
\[ 1 \rightarrow p_*(\pi_1(\widetilde{M})) \rightarrow \pi_1(M) \rightarrow \mathbb{Z}/k\mathbb{Z} \rightarrow 1
\] 
for some $k \geq 1$, where the kernel is left-orderable and the quotient (if nontrivial) is circularly orderable.  If the quotient is trivial this means that $\pi_1(M)$ itself is left-orderable, and the conclusion follows.  If the quotient is nontrivial, then Proposition \ref{sesprop}(2) finishes the proof.
\end{proof}

Circular orderability is therefore one possible approach to the L-space conjecture, by first tackling the conjecture up to finite cyclic covers.  That is, we have the following ``circular orderability'' version of Conjecture \ref{lspace}.

\begin{conjecture}[The L-space conjecture, circular orderability version]
\label{circular_lspace}
If $M$ is an irreducible, rational homology $3$-sphere that is not a lens space, then the following are equivalent:\footnote{If we allow $M$ to be a lens space, then the conjecture as stated would not be true, again because the trivial group is left-orderable by our conventions.}
\begin{enumerate}
\item The fundamental group of $M$ is circularly orderable.
\item There exists a finite cyclic cover $\widetilde{M}$ of $M$ that supports a coorientable taut foliation.
\item There exists a finite cyclic cover $\widetilde{M}$ of $M$ that is not an L-space.
\end{enumerate}
\end{conjecture}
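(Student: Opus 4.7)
The approach is to reduce Conjecture \ref{circular_lspace} to the ordinary L-space conjecture (Conjecture \ref{lspace}) applied to finite cyclic covers of $M$, using Theorem \ref{LO_covers} as the bridge. First, by Theorem \ref{LO_covers}, condition (1) is equivalent to the statement that $M$ admits a finite cyclic cover $\widetilde{M} \to M$ with $\pi_1(\widetilde{M})$ left-orderable. So the plan is to argue that for any finite cyclic cover $\widetilde{M}$ of $M$, the three properties ``not an L-space'', ``supports a coorientable taut foliation'', and ``has left-orderable fundamental group'' are mutually equivalent; once this is in hand, the three existence statements (1), (2), (3) become trivially equivalent.

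To establish this equivalence for $\widetilde{M}$, I would first observe that $\widetilde{M}$ is irreducible (by the classical theorem of Meeks--Simon--Yau that finite covers of irreducible $3$-manifolds are irreducible) and is not $\mathbb{S}^3$, since a cyclic quotient of $\mathbb{S}^3$ is a lens space, which is excluded by hypothesis on $M$. If $b_1(\widetilde{M})>0$, then $\widetilde{M}$ cannot be an L-space, its fundamental group surjects onto $\mathbb{Z}$ and so is left-orderable by \cite[Theorem 1.1]{BRW}, and $\widetilde{M}$ supports a coorientable taut foliation by Gabai's theorem; in that case all three properties hold automatically. If instead $b_1(\widetilde{M})=0$, then $\widetilde{M}$ is an irreducible rational homology $3$-sphere other than $\mathbb{S}^3$, and the desired equivalences are precisely the content of Conjecture \ref{lspace} applied to $\widetilde{M}$.

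Assembling the implications, one runs Theorem \ref{LO_covers} once in each direction: if (1) holds, pick the cover $\widetilde{M}$ with left-orderable $\pi_1$ supplied by Theorem \ref{LO_covers} and apply the equivalence for $\widetilde{M}$ to conclude (2) and (3); conversely, starting from (2) or (3), the equivalence for the given $\widetilde{M}$ produces left-orderability of $\pi_1(\widetilde{M})$, after which Theorem \ref{LO_covers} yields (1). (It is worth noting that the cyclic covers witnessing (2) and (3) need not be the same, but each can be routed through left-orderability and back via Theorem \ref{LO_covers}.)

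The main obstacle is of course that Conjecture \ref{lspace} itself is open, so this plan only yields Conjecture \ref{circular_lspace} conditionally on the ordinary L-space conjecture. The genuine content of the proposal is therefore the reduction itself: Conjecture \ref{circular_lspace} is no stronger than the union of Conjecture \ref{lspace} over the finite cyclic covers of $M$. Unconditional progress would seem to require either proving Conjecture \ref{lspace} for the particular cover $\widetilde{M}$ arising from a given circular ordering (for which the finite-order Euler class viewpoint of Theorem \ref{eulerclassorder} may give useful structural information on $\widetilde{M}$), or else finding direct criteria---perhaps involving the rotation number $\mathrm{rot}_c$ of distinguished elements of $\pi_1(M)$---for the existence of a cyclic cover exhibiting one of the three properties, bypassing the L-space conjecture entirely.
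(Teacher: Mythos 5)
This statement is a \emph{conjecture}; the paper offers no proof of it, only the motivation that, via Theorem \ref{LO_covers}, condition (1) is equivalent to the existence of a finite cyclic cover with left-orderable fundamental group, so that the statement ``mirrors the usual L-space conjecture up to finite cyclic covers.'' Your proposal is exactly that reduction, carried out carefully and honestly: you correctly invoke Theorem \ref{LO_covers} for the (1)-direction, correctly dispose of the case $b_1(\widetilde{M})>0$ (where all three properties hold unconditionally, by Gabai, by \cite{BRW}, and by the definition of an L-space), correctly note that $\widetilde{M}$ is irreducible and not $\mathbb{S}^3$ (the lens-space exclusion on $M$ is doing precisely this work, matching the footnote in the statement), and correctly observe that the covers witnessing (2) and (3) need not coincide but can each be routed through left-orderability. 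This is, as far as one can tell, the same logical skeleton the authors have in mind.

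The one thing to be clear about is that this is not a proof of the statement and cannot be turned into one by these means: the entire equivalence in the rational homology sphere case rests on Conjecture \ref{lspace}, which is open, so what you have established is only that Conjecture \ref{circular_lspace} follows from Conjecture \ref{lspace} applied to all finite cyclic covers of $M$ (and, in particular, is no harder than it). You acknowledge this explicitly, which is the right posture. If you wanted to extract an unconditional statement from your argument, it would be the partial implications that do not pass through the open conjecture --- for instance, that (2) implies (1) whenever the taut foliation on $\widetilde{M}$ can be upgraded to left-orderability of $\pi_1(\widetilde{M})$ by the universal-circle methods of \cite{CaD,BH} cited in the introduction --- but none of the three equivalences of Conjecture \ref{circular_lspace} is claimed or provable in the paper as it stands.
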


Note that this has conjectural implications beyond what would follow from the ``left-orderability'' version of the conjecture.  For instance, the following theorem connects certain topological properties directly to circular orderability (without passing via left-orderability).

\begin{theorem}\cite[Theorem 3.1, Corollary 3.9]{CaD}
If $M$ is an orientable, atoriodal $3$-manifold containing a very full tight essential lamination or supporting a pseudo-Anosov flow, then $\pi_1(M)$ is circularly orderable.  
\end{theorem}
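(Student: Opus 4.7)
The plan is to deduce the theorem by combining two ingredients: the universal circle constructions of Calegari and Dunfield \cite{CaD}, which under either hypothesis produce a faithful representation $\r : \pi_1(M) \to \mathrm{Homeo}_+(\S^1)$, and the standard algebraic fact that any countable group admitting such a faithful action is circularly orderable.

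First I would separate the two hypotheses and invoke the relevant input from \cite{CaD}. If $M$ contains a very full tight essential lamination $\Lambda$, one applies \cite[Theorem 3.1]{CaD} to obtain a universal circle $\S^1_{univ}$ together with an orientation-preserving action of $\pi_1(M)$; the hypotheses that $\Lambda$ is very full and tight, together with atoroidality, guarantee that this action is faithful. In the pseudo-Anosov flow case, \cite[Corollary 3.9]{CaD} produces a faithful action of $\pi_1(M)$ on the flow ideal circle. The atoroidality assumption is essential in both cases: an essential torus in $M$ would typically produce a nontrivial kernel generated by a fibre-like class acting trivially on the universal circle.

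Second I would convert this faithful action into a circular ordering. After passing to a semi-conjugate minimal action, faithfulness implies that for every nontrivial $g \in \pi_1(M)$ the set $\mathrm{Fix}(\r(g))$ is a proper closed nowhere-dense subset of $\S^1$. Since $\pi_1(M)$ is countable, the Baire category theorem yields a point $x \in \S^1$ whose $\pi_1(M)$-stabilizer is trivial. Pulling back the natural cyclic order of the free orbit $\pi_1(M) \cdot x \subset \S^1$ via the bijection $g \mapsto \r(g) \cdot x$ defines a left-invariant circular ordering $c$ on $\pi_1(M)$.

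The main obstacle is concealed in the first step: the existence and faithfulness of the universal circle are the deep content of \cite{CaD}, resting on substantial 3-manifold topology (leaf spaces of laminations, ideal circles for pseudo-Anosov flows, control over branching loci). Once those inputs are taken for granted, the algebraic translation in the second step is standard, so the proof amounts to assembling the cited Calegari--Dunfield machinery with the dictionary between faithful orientation-preserving $\S^1$-actions of countable groups and circular orderings.
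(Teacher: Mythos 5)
First, a point of comparison: the paper offers no proof of this statement at all --- it is quoted from Calegari--Dunfield and used as a black box, so there is no internal argument to measure yours against. Your first step accordingly just restates the cited content of \cite{CaD} (the construction of a faithful, orientation-preserving action of $\pi_1(M)$ on a universal circle, resp.\ on the ideal circle of the flow), which is legitimate but is not something you are proving.

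The genuine gap is in your second step, the passage from a faithful action on $\mathbb{S}^1$ to a circular ordering. It is false in general that faithfulness forces $\mathrm{Fix}(\rho(g))$ to be nowhere dense: a nontrivial homeomorphism of the circle can fix an entire arc pointwise, and passing to the minimal semi-conjugate action neither repairs this nor is it harmless (semi-conjugating to the minimal action can kill faithfulness, e.g.\ when there is a finite orbit). Consequently the Baire category argument for a point with trivial stabilizer fails. A concrete counterexample to your claim is Thompson's group $T$: it acts faithfully and minimally on $\mathbb{S}^1$, yet every point is fixed by some nontrivial element (take an element supported on a dyadic arc missing that point), so there is no free orbit. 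The fact you need is still true --- a countable group is circularly orderable if and only if it embeds in $\mathrm{Homeo}_+(\mathbb{S}^1)$ --- but its proof runs differently: by countability choose points $x_1, x_2, \dots$ in $\mathbb{S}^1$ so that only the identity fixes all of them (for each nontrivial $g$ pick a point it moves), and define the circular ordering of $g_1, g_2, g_3$ by evaluating at the first $x_n$ whose orbit separates them, with a lexicographic tie-breaking rule; equivalently, blow up a single orbit to make it free. This is standard and is in \cite{Cal} (the source the present paper leans on elsewhere for exactly such constructions). Replace your Baire argument with that construction, or simply cite it, and the proposal becomes a correct assembly of the Calegari--Dunfield machinery with the standard dictionary.
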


Conjecture \ref{circular_lspace} therefore predicts that one can obstruct the existence of a pseudo-Anosov flow on a rational homology $3$-sphere $M$, or the existence of a very full tight essential lamination in $M$ by showing, for example, that all finite cyclic covers are L-spaces or that no finite cyclic cover supports a coorientable taut foliation.  

\section{Slope detection and $3$-manifolds having nontrivial JSJ decomposition}\label{sec1}

One of the key techniques in the analysis of manifolds admitting a nontrivial JSJ decomposition is that of \textit{slope detection} by left-orderings.  This idea first appeared in a basic form in \cite[Definition 2.6]{CLW}, was further developed in \cite{BC} as a key technique in proving Conjecture \ref{lspace} for graph manifolds \cite{BC, HRRW}, and appears also in \cite{BC2, BGH}.  We recall the central technique from \cite{CLW} in the following theorem.

\begin{theorem} \cite[Theorem 2.7]{CLW} 
\label{lo_slopes}
Suppose that $M_1, M_2$ are $3$-manifolds with incompressible torus boundaries $\partial M_i$, and that $\phi : \partial M_1 \rightarrow \partial M_2$ is a homeomorphism such that $W = M_1 \cup_{\phi} M_2$ is irreducible.  If there exists a slope $\alpha$ such that both $\pi_1(M_1(\alpha))$ and $\pi_1(M_2(\phi_*(\alpha)))$ are left-orderable, then $\pi_1(W)$ is left-orderable.
\end{theorem}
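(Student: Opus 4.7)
The plan is to construct left-orderings on $\pi_1(M_1)$ and $\pi_1(M_2)$ whose restrictions to the amalgamating peripheral subgroup $\pi_1(\partial M_1) \cong \pi_1(\partial M_2) \cong \mathbb{Z}^2$ agree under the identification induced by $\phi_{\ast}$, and then to invoke a gluing theorem for amalgamated free products to promote these into a left-ordering of $\pi_1(W) = \pi_1(M_1) \ast_{\mathbb{Z}^2} \pi_1(M_2)$. The amalgamated product structure comes from Seifert--van Kampen applied to $W = M_1 \cup_\phi M_2$, together with incompressibility of $\partial M_i$ (which makes the peripheral tori inject into the factors).

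On each side I would apply the lexicographic construction of Proposition \ref{sesprop}(1) to the short exact sequence
\[
1 \longrightarrow N_i \longrightarrow \pi_1(M_i) \longrightarrow \pi_1(M_i(\alpha)) \longrightarrow 1,
\]
where $N_i$ is the normal closure of the filling slope in $\pi_1(M_i)$. The quotient is left-orderable by hypothesis; the kernel $N_i$ is the fundamental group of the infinite cyclic cover of $M_i$ dual to $\alpha$, a noncompact $\P^2$-irreducible $3$-manifold. Every finitely generated subgroup of $N_i$ is carried by a compact $3$-submanifold with nonempty boundary and therefore has infinite first homology, so $N_i$ is locally indicable, hence left-orderable, by the argument of \cite[Theorem 1.1]{BRW}. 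Combining these two orderings lex-style then yields a left-ordering $<_i$ on each $\pi_1(M_i)$.

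Next I would analyze the induced ordering on $\pi_1(\partial M_i)$. Writing $\pi_1(\partial M_i) = \langle \alpha, \beta_i\rangle$ for a dual slope $\beta_i$, the element $\alpha$ lies in the convex subgroup $N_i$, while $\beta_i$ projects to an element of $\pi_1(M_i(\alpha))$; the restriction is therefore a lex ordering of $\mathbb{Z}^2$ in which $\langle \alpha \rangle$ is a convex rank-one subgroup. The sign of $\alpha$ is controlled by the chosen ordering on $N_i$, and the sign of the image of $\beta_i$ is independently controlled by the chosen ordering on $\pi_1(M_i(\alpha))$; each can be reversed at will. After such adjustments, the restrictions of $<_1$ and $<_2$ to the amalgamating torus group can be made to coincide under $\phi_{\ast}$.

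The main obstacle is the gluing step: mere agreement of the restricted orderings on the amalgamating subgroup is not in itself enough to left-order an amalgamated product. One must verify a Bludov--Glass compatibility condition for $\pi_1(M_1) \ast_{\mathbb{Z}^2} \pi_1(M_2)$, and the crucial extra feature available here is the convexity of $\langle \alpha \rangle$ in each $\pi_1(M_i)$ under $<_i$. This can be arranged via a secondary lex construction inside $N_i$ making $\langle \alpha \rangle$ itself convex in $N_i$, and once in place it supplies precisely the structural compatibility needed for the Bludov--Glass gluing theorem to assemble a left-ordering of $\pi_1(W)$. Carefully verifying this convexity hypothesis and its interaction with the amalgamation is where the technical heart of the proof lies.
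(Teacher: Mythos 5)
Your approach---left-order each $\pi_1(M_i)$ lexicographically over the filling quotient and then glue the two ordered groups over the peripheral $\mathbb{Z}^2$---is not the route taken here, and it has a genuine gap exactly where you flag "the technical heart." The amalgamating subgroup in $\pi_1(W)=\pi_1(M_1)\ast_{\mathbb{Z}^2}\pi_1(M_2)$ is the full rank-two torus group, and in your lexicographic orderings that subgroup is \emph{not} convex (only $N_i$, and possibly $\langle\alpha\rangle$, are). The Bludov--Glass criterion for left-orderability of an amalgam over a non-cyclic subgroup is a strong condition on normal families of orderings of the factors, and neither agreement of the restricted orderings on $\mathbb{Z}^2$ nor convexity of the rank-one subgroup $\langle\alpha\rangle$ is known to imply it; you assert that this convexity "supplies precisely the structural compatibility needed," but no argument is given, and this is precisely the step that fails in general for amalgams over $\mathbb{Z}^2$. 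A secondary error: the kernel $N_i=\langle\langle\alpha\rangle\rangle$ is not the fundamental group of an infinite cyclic cover dual to $\alpha$; it corresponds to the cover of $M_i$ with deck group $\pi_1(M_i(\alpha))$. Local indicability of $N_i$ does still follow from the Scott-core argument of \cite{BRW} when $\pi_1(M_i(\alpha))$ is infinite, so this is repairable, but as stated it is wrong. Finally, note that your argument never uses irreducibility of $W$, which is the hypothesis the actual proof leans on.

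The proof this paper relies on (Theorem 2.7 of \cite{CLW}, recovered here as the left-orderable case of Theorem \ref{CO slope}) avoids ordering $\pi_1(M_i)$ altogether. One observes that $\pi_1(W)$ surjects onto $\pi_1(M_1(\alpha))\ast_{C}\pi_1(M_2(\phi_*(\alpha)))$, where $C$ is the common image of the peripheral subgroup after killing $\alpha$; since left-orderable groups are torsion-free, $C$ is either trivial or infinite cyclic, so the target is either a free product of left-orderable groups or an amalgam of left-orderable groups over a cyclic subgroup, and in either case it is left-orderable by \cite{BG}. Irreducibility of $W$ then lets one invoke \cite[Theorem 3.2]{BRW} (Proposition \ref{COS}(1)) to pull left-orderability back to $\pi_1(W)$ from this nontrivial left-orderable quotient. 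Reducing to a cyclic amalgamating subgroup is what makes the gluing step trivial; if you want to salvage your direct approach you would have to verify the full Bludov--Glass condition over $\mathbb{Z}^2$, which is substantially harder than the theorem itself.
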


In what follows, we develop a generalization of this technique that applies to circular orderability and present applications to various classes of manifolds. To prepare, we recall the following construction.  

If $(G,c)$ and $(H,d)$ are circularly ordered groups, and $\phi : G \rightarrow H$ is a homomorphism satisfying 
\[ c(g_1, g_2, g_3) = d(\phi(g_1), \phi(g_2), \phi(g_3)) \mbox{ for all $g_1, g_2, g_3 \in G$}
\]
then we say that $\phi$ is \textit{order-preserving} or \textit{compatible with the pair $(c,d)$}.  Note that in this case, $\phi$ is necessarily injective.  Then we can define $\widetilde{\phi} : \widetilde{G_c} \rightarrow \widetilde{H_d}$ by $\widetilde{\phi}(n, g) = (n, \phi(g))$, so that $\widetilde{\phi}$ is order-preserving with respect to the left-orderings $<_c$ and $<_d$ of $\widetilde{G_c} $ and  $\widetilde{H_d}$ respectively (or \textit{compatible with the pair $(<_c, <_d)$}).

\begin{proposition}
\label{amalgam_prop}
Suppose that $\{(G_i, c_i)\}_{i \in I}$ are circularly ordered groups each containing a subgroup $H_i$.  If $(D, d)$ is a cyclic circularly ordered group and $\phi_i : D \rightarrow H_i$ is an isomorphism compatible with the pair $(d, c_i)$ for every $i$, then the free product with amalgamation $*_{i \in I} G_i(D \stackrel{\phi_i}{\cong} H_i) $ is circularly orderable.

\end{proposition}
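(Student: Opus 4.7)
The plan is to lift the problem, via Construction \ref{lift_construction}, to a question about left-orderability of a free product with amalgamation, then recover a circular ordering on the quotient via Construction \ref{quotient_construction}.

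First I would form the left-ordered central extensions $(\widetilde{G_i}_{c_i}, <_{c_i})$ for each $i \in I$ together with $(\widetilde{D}_d, <_d)$, each equipped with its canonical positive cofinal central element (written $z_i$ and $z$, respectively). Compatibility of $\phi_i$ with the pair $(d, c_i)$ implies that $f_{c_i}(\phi_i(g), \phi_i(h)) = f_d(g, h)$ for all $g, h \in D$, so the formula $(n, g) \mapsto (n, \phi_i(g))$ defines a homomorphism $\widetilde{\phi_i}: \widetilde{D}_d \hookrightarrow \widetilde{G_i}_{c_i}$ that is injective, order-preserving with respect to the pair $(<_d, <_{c_i})$, sends $z$ to $z_i$, and has image equal to the full preimage $\widetilde{H_i}$ of $H_i$ in $\widetilde{G_i}_{c_i}$.

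Next I would form the amalgamated lift
\[
\widetilde{W} := \ast_{i \in I} \widetilde{G_i}_{c_i}\bigl(\widetilde{D}_d \stackrel{\widetilde{\phi_i}}{\cong} \widetilde{H_i}\bigr).
\]
The identifications force all the $z_i$ to coincide with a single element $z \in \widetilde{W}$, which is central in $\widetilde{W}$ since it is central in each factor. The main obstacle is to show that $\widetilde{W}$ is left-orderable with $z$ positive; I would achieve this by appealing to the Bludov-Glass criterion of \cite{BG}. The relevant observations are that the amalgamated subgroup $\widetilde{D}_d$ is abelian (isomorphic to $\mathbb{Z}$ when $D$ is finite cyclic and to $\mathbb{Z}^2$ when $D$ is infinite cyclic, since $\widetilde{D}_d$ is torsion-free abelian by left-orderability), that every $<_{c_i}$ restricts via $\widetilde{\phi_i}$ to the single common ordering $<_d$, and that the common central element $z$ is positive in every factor; these are precisely the hypotheses needed for the amalgamation to inherit a left-ordering extending each $<_{c_i}$.

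Once $\widetilde{W}$ is left-ordered, cofinality of $z$ follows by induction on syllable length: each factor element is bounded between $z^{-n}$ and $z^n$ for some $n$, and such bounds compose through arbitrary reduced words via left-invariance together with the centrality of $z$. Since $\langle z \rangle$ is normal in $\widetilde{W}$ and is contained in each amalgamating subgroup, taking the quotient commutes with amalgamation, yielding
\[
\widetilde{W}/\langle z \rangle \;\cong\; \ast_{i \in I} \bigl(\widetilde{G_i}_{c_i}/\langle z_i \rangle\bigr)\bigl(\widetilde{D}_d/\langle z \rangle \stackrel{\phi_i}{\cong} H_i\bigr) \;\cong\; \ast_{i \in I} G_i(D \stackrel{\phi_i}{\cong} H_i).
\]
Applying Construction \ref{quotient_construction} to $\widetilde{W}$ with respect to $z$ then endows the right-hand side with a circular ordering, completing the proof.
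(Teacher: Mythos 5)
Your overall architecture---lift to the left-ordered central extensions, amalgamate over $\widetilde{D_d}$, establish left-orderability of the lifted amalgam with the common central element $z$ positive and cofinal, then descend to a circular ordering of the quotient---is the same as the paper's. The gap is at the step where you claim left-orderability of $\widetilde{W}$. When $D$ is finite, $\widetilde{D_d}$ is infinite cyclic and the Bludov--Glass result on amalgamation of left-orderable groups over a cyclic subgroup does apply (this is exactly what the paper does in that case, and there one does not even need the orderings to agree on the amalgamated subgroup). But when $D$ is infinite cyclic, $\widetilde{D_d} \cong \mathbb{Z}^2$, and the conditions you list---the amalgamated subgroup is abelian, every $<_{c_i}$ restricts via $\widetilde{\phi_i}$ to $<_d$, and $z$ is positive and central in each factor---are \emph{not} ``precisely the hypotheses'' of any Bludov--Glass theorem. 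Their general criterion requires compatibility of entire normal families of orderings, not just agreement of one ordering per factor on the amalgamated subgroup, and agreement on an abelian amalgamated subgroup is genuinely insufficient: gluings of two left-orderable knot-manifold groups along $\mathbb{Z}^2$ can fail to be left-orderable, which is the whole reason Theorem \ref{lo_slopes} needs a detected slope rather than an arbitrary gluing. The ingredient that actually makes this case work is that the image of $\widetilde{D_d}$ in each factor contains the cofinal central element $z_i$, hence is unbounded above and below in $(\widetilde{(G_i)_{c_i}}, <_{c_i})$; this unboundedness is the hypothesis of \cite[Proposition 5.6]{CG}, which is what the paper invokes. In your write-up the cofinality of $z$ only enters \emph{after} you have assumed a left-ordering on $\widetilde{W}$ exists, whereas it must enter at the point of constructing that ordering.

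A smaller issue: the identification $\widetilde{W}/\langle z \rangle \cong *_{i \in I} G_i(D \stackrel{\phi_i}{\cong} H_i)$ is correct but requires a normal-form argument to see that the kernel of the induced surjection is exactly $\langle z \rangle$ (using that coset representatives of $\widetilde{H_i}$ in $\widetilde{(G_i)_{c_i}}$ descend to coset representatives of $H_i$ in $G_i$). The paper sidesteps both this and the explicit descent by citing \cite[Theorem 1]{CG}, which converts left-orderability of the lifted amalgam directly into circular orderability of the original one.
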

\begin{proof}

The homomorphism $\phi_i \phi_j^{-1}$ is compatible with the pair $(c_j, c_i)$ for all $i, j \in I$, and therefore the map $\widetilde{\phi_i \phi_j^{-1}}$ is also compatible with the pair $(<_{c_j}, <_{c_i})$ for all $i, j \in I$.  

Now suppose that $D \cong \mathbb{Z}$, in which case $\widetilde{D_d} \cong \mathbb{Z} \times \mathbb{Z}$ and the images $\widetilde{\phi_i}(\widetilde{D_d})$ and $\widetilde{\phi_j}(\widetilde{D_d})$ are neither bounded from above, nor from below, in $\widetilde{(G_i)_{c_i}}$ and $\widetilde{(G_j)_{c_j}}$ respectively, since each image contains the cofinal central element of the respective extension. We may therefore apply \cite[Proposition 5.6]{CG} to conclude that the group $*_{i \in I} \widetilde{(G_i)_{c_i}}(\widetilde{D_d} \stackrel{\widetilde{\phi_i}}{\cong} \widetilde{(H_i)_{c_i}}) $ is left-orderable, and then apply \cite[Theorem 1]{CG} to conclude that $*_{i \in I} G_i(D \stackrel{\phi_i}{\cong} H_i) $ is circularly orderable in this case.

Next suppose that $D$ is finite, in which case $\widetilde{D_d}$ is infinite cyclic.  Then  $*_{i \in I} \widetilde{(G_i)_{c_i}}(\widetilde{D_d} \stackrel{\widetilde{\phi_i}}{\cong} \widetilde{(H_i)_{c_i}}) $ is an amalgamation of left-orderable groups along a cyclic subgroup, which is always left-orderable \cite{BG}.  As above, that $*_{i \in I} G_i(D \stackrel{\phi_i}{\cong} H_i) $ is circularly orderable then follows from \cite[Theorem 1]{CG}.
\end{proof}

In preparation for the next theorem, suppose that $G$ is a group and let $g \in G$.  We use the notation $\langle \langle g \rangle \rangle$ to denote the normal closure of $g \in G$, that is, the smallest (with respect to inclusion) normal subgroup of $G$ containing $g$.

\begin{theorem}\label{CO slope}
Let $M_1$, $M_2$ be two $3$-manifolds with incompressible torus boundaries, and let $\phi:\partial M_1\rightarrow \partial M_2$ be a homeomorphism such that $M=M_1\cup_{\phi} M_2$ is $\mathbb{P}^2$-irreducible.  If there exists a slope $\a \in H_1(\partial M_1; \mathbb{Z})/\{ \pm 1\}$ such that $\pi_1(M_1(\a))$ and $\pi_1(M_2(\phi_*(\a)))$ are infinite circularly orderable groups, and either
\begin{enumerate}
\item at least one of $\pi_1(\partial M_1) \subset \langle \langle \a \rangle \rangle $ or $\pi_1(\partial M_2) \subset \langle \langle \phi_*(\a) \rangle \rangle $ holds, or
 \item  $\pi_1(M_1(\a))$ and $\pi_1(M_2(\phi_*(\a)))$ admit circular orderings $c_1$ and $c_2$ respectively such that the induced map
\[
\overline{\phi_*} : \pi_1(\partial M_1) /(\langle \langle \a \rangle \rangle \cap \pi_1(\partial M_1))  \longrightarrow \pi_1(\partial M_2)/(\langle \langle \phi_*(\a) \rangle \rangle \cap \pi_1(\partial M_2))
\]
is an isomorphism between nontrivial groups which is compatible with the pair $(c_1, c_2)$, 
\end{enumerate}
then $\pi_1(M)$ is circularly orderable.

\end{theorem}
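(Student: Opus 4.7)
The plan is to produce an infinite circularly orderable quotient of $\pi_1(M)$ and then invoke Proposition~\ref{COS}(2), paralleling the strategy of Theorem~\ref{lo_slopes}. By van Kampen's theorem and the incompressibility of the boundaries, $\pi_1(M) \cong \pi_1(M_1) *_H \pi_1(M_2)$ with $H$ the common image of $\pi_1(\partial M_i)$ identified via $\phi_*$. Writing $\alpha_1 = \alpha$ and $\alpha_2 = \phi_*(\alpha)$, let $q_i : \pi_1(M_i) \twoheadrightarrow \pi_1(M_i(\alpha_i))$ be the quotient maps and set $\bar H_i := q_i(H)$; each $\bar H_i$ is a cyclic quotient of $H/\langle\alpha\rangle \cong \mathbb{Z}$.

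The key step is a pushout identification of the quotient
\[
\bar\pi := \pi_1(M)/\langle\langle\alpha\rangle\rangle_{\pi_1(M)} \;\cong\; \pi_1(M_1(\alpha_1)) *_D \pi_1(M_2(\alpha_2)),
\]
where $D$ is the common cyclic image of $H$ in both fillings, identified via the induced map $\overline{\phi_*}$. Justifying this requires tracking, via Bass--Serre theory, the intersection of $\langle\langle\alpha\rangle\rangle_{\pi_1(M)}$ with each $\pi_1(M_i)$ --- specifically, verifying that no unexpected relations arise so that the amalgamating subgroup is precisely the cyclic $D$. This is the main technical obstacle I anticipate.

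Granted this pushout, case~(2) is immediate: the hypothesis identifies $\bar H_1 \cong \bar H_2$ as a single circularly ordered cyclic group $(D,d)$ whose inclusions into $(\pi_1(M_i(\alpha_i)), c_i)$ are order-preserving, which is exactly the setting of Proposition~\ref{amalgam_prop}. Applying it supplies a circular ordering on $\bar\pi$. Since each $\pi_1(M_i(\alpha_i))$ is infinite by hypothesis, $\bar\pi$ is infinite, and Proposition~\ref{COS}(2) applied to the surjection $\pi_1(M) \twoheadrightarrow \bar\pi$ completes this case.

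For case~(1), assume without loss of generality $\pi_1(\partial M_1) \subseteq N_1 := \langle\langle\alpha\rangle\rangle_{\pi_1(M_1)}$. Then $\bar H_1 = \{1\}$, which forces $D = \{1\}$, and the amalgam degenerates to a free product of $\pi_1(M_1(\alpha_1))$ with a quotient of $\pi_1(M_2(\alpha_2))$ by the normal closure of $\bar H_2$. In this regime the compatibility hypothesis of Proposition~\ref{amalgam_prop} becomes vacuous and reduces to Proposition~\ref{free product}(2); the remaining task is to supply circular orderings on each free factor. Here the collapse $\pi_1(\partial M_1) \subseteq N_1$ combined with the infinitude of the fillings is to be used to produce such orderings, typically via surjections onto infinite cyclic quotients arising from the topological degeneration. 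With a circular ordering on $\bar\pi$ in hand, one final application of Proposition~\ref{COS}(2) to $\pi_1(M) \twoheadrightarrow \bar\pi$ yields the conclusion.
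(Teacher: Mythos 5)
Your Case (2) is essentially the paper's argument, but you have manufactured an obstacle that the paper avoids entirely. You do not need to identify $\pi_1(M)/\langle\langle\alpha\rangle\rangle_{\pi_1(M)}$ with the amalgam, and no Bass--Serre analysis is required: Proposition \ref{COS}(2) only asks for a surjection onto an infinite circularly orderable group. Taking the normal closures of $\alpha$ and $\phi_*(\alpha)$ in $\pi_1(M_1)$ and $\pi_1(M_2)$ \emph{separately}, the quotient maps $q_i\colon \pi_1(M_i)\to\pi_1(M_i(\alpha_i))$, composed with the inclusions into $\pi_1(M_1(\alpha))\ast_{\overline{\phi_*}}\pi_1(M_2(\phi_*(\alpha)))$, agree on the peripheral subgroup precisely because $\overline{\phi_*}$ is an isomorphism; the universal property of the pushout $\pi_1(M_1)\ast_{\phi_*}\pi_1(M_2)$ then hands you a homomorphism $f$ onto that amalgam, which is circularly orderable by Proposition \ref{amalgam_prop} and infinite because its factors are. (Your pushout identification is in fact correct under the case-(2) hypothesis, by a presentation computation rather than Bass--Serre theory, but it is more than you need.)

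Case (1) is where your proposal genuinely breaks. Degenerating to the free product $\pi_1(M_1(\alpha))\ast\bigl(\pi_1(M_2(\phi_*(\alpha)))/\langle\langle\bar H_2\rangle\rangle\bigr)$ leaves you needing a circular ordering on the second factor, which is an uncontrolled quotient: circular orderability does not pass to quotients, case (1) supplies no orderings on the fillings to start from, and your proposed mechanism (``surjections onto infinite cyclic quotients arising from the topological degeneration'') has no basis in the hypotheses. The paper's move is different and simpler: since $\pi_1(\partial M_1)\subset\langle\langle\alpha\rangle\rangle_{\pi_1(M_1)}$, the map $q_1$ on $\pi_1(M_1)$ together with the \emph{trivial} map on $\pi_1(M_2)$ agree on the amalgamating subgroup, so the induced map $f$ has image exactly $\pi_1(M_1(\alpha))$ --- a single filled-manifold group, not a free product --- and Proposition \ref{COS}(2) is applied to that. (Even this reduction still requires knowing that $\pi_1(M_1(\alpha))$ itself is circularly orderable, which the paper asserts of the image of $f$; but at least it isolates the issue to one filling rather than to a free product one of whose factors is a further, uncontrolled quotient.) You should adopt this collapse rather than trying to order the free product.
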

\begin{proof} 
Suppose first that there exists a slope $\alpha$ satisfying condition (1) of the theorem, without loss of generality we assume that $\pi_1(\partial M_1) \subset \langle \langle \a \rangle \rangle $. To simplify notation, let $G_i=\pi_1(M_i)$, $i=1,2$, each equipped with an inclusion homomorphism $f_i:\Z\oplus \Z\longrightarrow G_i$ that identifies the peripheral subgroup $\pi_1(\partial M_i)$ with $\Z\oplus \Z$, satisfying $\phi_\ast\circ f_1=f_2$, and let $q_1:G_1\longrightarrow G_1/\langle \langle \a \rangle \rangle$ and $q_2:G_2\longrightarrow G_2/\langle \langle \phi_\ast (\a) \rangle \rangle$ be the quotient maps.

In this case, there exists a unique map $f$ such that the following diagram commutes:
\[
\xymatrix@R=1.2cm@C=1.2cm{
 \Z\oplus \Z \ar[r]^{f_2}      \ar[d]_{f_1}         & G_2 \ar[d]_{} \ar@/^1pc/[rdd]^{1}   \\
 G_1 \ar[r]^{} \ar@/_1pc/[rrd]_{q_1} & G_1\ast_{\phi_\ast}G_2 \ar@{-->}[rd]|{f}             \\
                                      &                                   & G_1/\langle \langle \a \rangle \rangle
}
\]
As the image of the map $f$ is an infinite, circularly orderable group, that $\pi_1(M) \cong G_1\ast_{\phi_\ast}G_2$ is circularly orderable follows from Proposition \ref{COS}(2).

Suppose there exists a slope $\alpha$ satisfying the conditions (2) of the theorem.  Let $\overline{\phi_*} $ denote the map induced by $\phi$, as in the statement of the theorem.  There are two possibilities for subgroup $\langle \langle \a \rangle \rangle \cap \pi_1(\partial M_1)$ of $\pi_1(\partial M_1) \cong \mathbb{Z} \oplus \mathbb{Z}$, it is isomorphic to either $\mathbb{Z}$ or $\mathbb{Z} \oplus n \mathbb{Z}$ for some $n >1$.   In each of these cases, $\overline{\phi_*} $ is an isomorphism between cyclic subgroups of $G_1$ and $G_2$ that is compatible with $(c_1, c_2)$ and so $G_1/\langle \langle \a \rangle \rangle *_{\overline{\phi_*}} G_2/\langle \langle \phi_*(\a) \rangle \rangle$ is circularly orderable by Proposition \ref{amalgam_prop}.

Then as before, there is a unique map $f$ such that the following diagram commutes:

\[
\xymatrix@R=1.2cm@C=1.2cm{
 \Z\oplus \Z \ar[r]^{f_2}      \ar[d]_{f_1}         & G_2 \ar[d]_{} \ar@/^1pc/[rdd]^{q_2}   \\
 G_1 \ar[r]^{} \ar@/_1pc/[rrd]_{q_1} & G_1\ast_{\phi_\ast}G_2 \ar@{-->}[rd]|{f}             \\
                                      &                                   & G_1/\langle \langle \a \rangle \rangle *_{\overline{\phi_*}} G_2/\langle \langle \phi_*(\a) \rangle \rangle
}
\]

Now since the groups $\pi_1(M_1(\a))$ and $\pi_1(M_2(\phi_*(\a)))$ are infinite, so is $G_1/\langle \langle \a \rangle \rangle *_{\overline{\phi_*}} G_2/\langle \langle \phi_*(\a) \rangle \rangle$.  That $\pi_1(M) \cong G_1\ast_{\phi_\ast}G_2$ is circularly orderable follows from Proposition \ref{COS}.

\end{proof}

Note that this recovers Theorem \ref{lo_slopes} in the case that the quotients are both left-orderable.

Recall that $\mathrm{rot}_c : G \rightarrow \mathbb{S}^1$ is an order-preserving homomorphism upon restriction to any abelian subgroup, so long as it is injective.  From this observation and Theorem \ref{CO slope} we arrive at the following:

\begin{corollary}
\label{match rot numbers}
Let $M_1$, $M_2$, $\phi:\partial M_1\rightarrow \partial M_2$ and $M=M_1\cup_{\phi} M_2$ be as in Theorem \ref{CO slope}, and that $\a \in H_1(\partial M_1; \mathbb{Z})/\{ \pm 1\}$ is such that $\pi_1(M_1(\a))$ and $\pi_1(M_2(\phi_*(\a)))$ are infinite circularly orderable groups.   Let $\beta \in \pi_1(\partial M_1)$ denote a dual class to $\alpha$, and let $q_1 : \pi_1(M_1) \rightarrow \pi_1(M_1(\a))$ and $q_2: \pi_1(M_2) \rightarrow \pi_1(M_2(\phi_*(\a)))$ denote the quotient maps.  If there exist circular orderings $c_1$ and $c_2$ of $\pi_1(M_1(\a))$ and $\pi_1(M_2(\phi_*(\a)))$ respectively such that 
\[ c_1(q_1(\beta^j), q_1(\beta^k), q_1(\beta^\ell)) = c_2(q_2(\phi_*(\beta^j)), q_2(\phi_*(\beta^k)), q_2(\phi_*\beta^\ell))) 
\]
for all $j, k , \ell \in \mathbb{Z}$, then $\pi_1(M)$ is circularly orderable.  In particular, if 
\[ \mathrm{rot}_{c_1}(q_1(\beta)) = \mathrm{rot}_{c_2}(q_2(\phi_*(\beta)))
\]
and $\mathrm{rot}_{c_i}$ are injective, then $\pi_1(M)$ is circularly orderable.
\end{corollary}

Applications of Theorem \ref{CO slope} or Corollary \ref{match rot numbers} therefore hinge upon being able to construct circular orderings of fundamental groups of $3$-manifolds where certain elements have prescribed rotation number.

\section{Rational longitudes and knot manifolds}
\label{rat_long}

Recall that a knot manifold is a compact, connected, irreducible and orientable $3$-manifold with boundary an incompressible torus.  In this section we demonstrate a technique for creating circular orderings of fundamental groups of knot manifolds, where the cyclic subgroup generated by a class dual to the rational longitude has a prescribed circular ordering.  

\begin{lemma}
\label{extend}
Suppose that $C$ is a cyclic group, $D$ is a nontrivial subgroup of $C$ and $c$ is a circular ordering of $D$.  Then there exists a circular ordering $c'$ of $C$ such that 
\[ c'(r, s, t)= c(r,s,t) \mbox{ for all $r, s, t \in D$}.
\]
%Fix a positive integer $k$.  Then every circular ordering of $k \mathbb{Z}$ extends to a circular ordering of $\mathbb{Z}$.
\end{lemma}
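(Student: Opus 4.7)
The plan is to split on the size of $D$ and then use Constructions \ref{lift_construction} and \ref{quotient_construction} together with a mild arithmetic input. If $|D| \leq 2$, the circular ordering $c$ is identically zero (no triple of elements of $D$ can be pairwise distinct), so any circular ordering of $C$ restricts to $c$; such an ordering exists since every cyclic group is circularly orderable (Proposition \ref{finite case} in the finite case, and left-orderability of $\mathbb{Z}$ otherwise).

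For the interesting case $|D| \geq 3$, I would form the left-ordered central extension $(\widetilde{D}_c, <_c)$ with canonical positive cofinal central element $z$. Since $D$ is cyclic, $\widetilde{D}_c$ is a finitely generated torsion-free abelian group. My goal is to produce a torsion-free abelian group $\widetilde{C}$ containing $\widetilde{D}_c$ as an index-$[C:D]$ subgroup, fitting into a central extension $1 \to \langle z \rangle \to \widetilde{C} \to C \to 1$ that restricts on $\widetilde{D}_c$ to the given one, and carrying a left-ordering $<$ that extends $<_c$ and keeps $z$ positive cofinal. Given such $\widetilde{C}$, Construction \ref{quotient_construction} yields a circular ordering $c'$ on $C$, and since $<$ restricts to $<_c$ on $\widetilde{D}_c$, the minimal representatives of elements of $D$ agree whether computed in $\widetilde{C}$ or $\widetilde{D}_c$, forcing $c'|_D = c$.

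When $C \cong \mathbb{Z}$, I would take $\widetilde{C} = \mathbb{Z}^2$ with $\widetilde{D}_c$ the preimage of $D$, and extend $<_c$ by declaring $x$ positive iff $[C:D] \cdot x \in \widetilde{D}_c$ is positive in $<_c$; torsion-freeness of $\widetilde{C}$ makes this a well-defined left-ordering, and a short estimate using finiteness of $[C:D]$ confirms that $z$ remains positive cofinal. When $C$ is finite of order $n$, I would use that circular orderings on a finite cyclic group of order $\ell \geq 3$ are parameterized by the rotation numbers $\{a/\ell : a \in (\mathbb{Z}/\ell\mathbb{Z})^*\}$ of a chosen generator, so writing $\mathrm{rot}_c(g) = j/m$ for a generator $g$ of $D$ and some $j$ with $\gcd(j,m) = 1$, the problem reduces to finding $a \in (\mathbb{Z}/n\mathbb{Z})^*$ with $a \equiv j \pmod{m}$. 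Such $a$ exists by the Chinese Remainder Theorem: the constraint $a \equiv j \pmod{m}$ already forces $\gcd(a,m) = 1$, and for each prime $p \mid [C:D]$ with $p \nmid m$ we are free to choose $a \bmod p$ to be nonzero. The main obstacles are the cofinality verification in the infinite case and the arithmetic existence of $a$ in the finite case, both routine but genuinely using the ambient structure.
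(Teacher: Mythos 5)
Your argument is correct, but it proceeds quite differently from the paper's proof. The paper works downstairs throughout: it sends a generator of $C$ into $\mathbb{S}^1$ via $\phi(1)=\exp(2\pi i\alpha/k)$ where $\mathrm{rot}_c$ of a generator of $D$ is $\exp(2\pi i\alpha)$, then splits into cases according to whether $\mathrm{rot}_c$ is injective --- pulling back the standard circular ordering of $\mathbb{S}^1$ when it is, and running a lexicographic construction through a short exact sequence onto roots of unity when it is not --- and asserts that the finite case reduces to the injective case. Your route instead passes upstairs to the left-ordered central extension $\widetilde{D}_c$, extends the left-ordering over a finite-index torsion-free abelian overgroup $\widetilde{C}$ by the standard device ``$x>0$ iff $[C:D]\cdot x>_c 0$,'' checks cofinality of $z$, and descends via Construction~\ref{quotient_construction}; in the finite case you reduce to matching rotation numbers of generators and solve $a\equiv j \pmod m$ with $\gcd(a,n)=1$ by the Chinese Remainder Theorem. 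Both approaches are legitimate, but your treatment of the finite case is actually more careful than the paper's: the paper's prescription amounts to taking $a=j$, and $\gcd(j,m)=1$ does not force $\gcd(j,n)=1$ (for instance $n=12$, $m=4$, $j=3$ gives a non-injective $\phi$), so the map $\phi$ need not be injective on $C$ and the claimed pullback need not define a circular ordering; your CRT step is exactly the repair needed, replacing $j$ by a unit $a$ of $\mathbb{Z}/n\mathbb{Z}$ in the same class mod $m$. What the paper's approach buys is uniformity --- the same formula $\phi(1)=\exp(2\pi i\alpha/k)$ handles irrational rotation numbers on $\mathbb{Z}$ without mentioning central extensions --- whereas your approach stays entirely within the lift/quotient formalism already set up in Constructions~\ref{lift_construction} and~\ref{quotient_construction} and makes the verification that $c'$ restricts to $c$ essentially automatic from the agreement of minimal representatives. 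The only points you should spell out if writing this in full are that $\widetilde{D}_c$ is indeed abelian and the extension trivial (both because $D$ is free or finite cyclic, so the relevant $H^2$ vanishes or the splitting can be chosen compatibly), and that applying Construction~\ref{quotient_construction} to $(\widetilde{D}_c,<_c,z)$ recovers $c$, which is the reversibility the paper cites from \cite{CG}.
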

\begin{proof} 
We first consider the case where $C = \mathbb{Z}$ is infinite cyclic and $D = k \mathbb{Z}$.  Let $c$ be an arbitrary circular ordering of $k \mathbb{Z}$, and let $d$ denote the standard circular ordering of $\mathbb{S}^1$.  Consider the rotation number homomorphism $\mathrm{rot}_c:k \mathbb{Z} \rightarrow \mathbb{S}^1$ corresponding to the circular ordering $c$.  Suppose that $\mathrm{rot}_c(k) = exp(2 \pi i \alpha)$ where $\alpha \in [0,1]$, and define $\phi: \mathbb{Z} \rightarrow \mathbb{S}^1$ by $\phi(1) = exp(\frac{2 \pi i \alpha}{k})$.  There are two cases to consider.

First, if $\mathrm{rot}_c$ is injective, then $c(r,s,t) = d(\mathrm{rot}_c(r), \mathrm{rot}_c(s), \mathrm{rot}_c(t))$ for all $r, s, t \in k \mathbb{Z}$.  Note that $\phi$ is injective because $\mathrm{rot}_c$ is injective, so we can define a circular ordering $c'$ on $\mathbb{Z}$ by $c'(r,s,t) = d(\phi(r), \phi(s), \phi(t))$, which clearly extends $c$ as required.

On the other hand suppose that $\mathrm{rot}_c$ is not injective, say $\alpha= p/q$ with $\gcd(p,q)=1$.  Then $c$ arises lexicographically from a short exact sequence
\[ 1 \longrightarrow H \longrightarrow  k \mathbb{Z} \stackrel{\mathrm{rot_c}}{\longrightarrow} \mathbb{Z}/q\mathbb{Z} \longrightarrow 1, 
\]
where $\mathbb{Z}/q\mathbb{Z}$ is identified naturally with the $q^{th}$ roots of unity and equipped with the restriction of the natural circular ordering $d$ of $\mathbb{S}^1$.  In this case $\phi$ yields a short exact sequence
\[1 \longrightarrow H \longrightarrow   \mathbb{Z} \stackrel{\phi}{\longrightarrow} \mathbb{Z}/ qk \mathbb{Z} \longrightarrow 1
\]
where $\mathbb{Z}/ qk \mathbb{Z}$ is again equipped with the natural circular ordering arising from the natural embedding into $\mathbb{S}^1$.  Thus if we use the latter short exact sequence to lexicographically define a circular ordering $c'$ of $\mathbb{Z}$, using the same left-ordering of $H$ as in the former short exact sequence, then $c'$ will be an extension of the given circular ordering $c$ of $k \mathbb{Z}$.

When $C$ is finite, $\mathrm{rot}_c$ is injective, so we can use the same construction as in the first case above.
\end{proof}

The next lemma is a standard result, but it is essential to our arguments and so we include a proof.

\begin{lemma}\label{rank}
 Let $M$ be a compact, connected, orientable 3-manifold with a torus boundary, and $\mathbb{F}$ be a field. If $i: \partial M \rightarrow M$ denotes the inclusion map, then the image of the map $$i_*^1: H_1(\partial M; \mathbb{F})\longrightarrow H_1(M; \mathbb{F})$$ is of rank one.
\end{lemma}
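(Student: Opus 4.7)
The plan is to combine the long exact sequence of the pair $(M,\partial M)$ with Poincar\'e--Lefschetz duality and an Euler characteristic calculation. Write $b_k = \dim_{\mathbb{F}} H_k(M;\mathbb{F})$ and $b_k' = \dim_{\mathbb{F}} H_k(M,\partial M;\mathbb{F})$. First I would extract the fragment
\[
H_1(\partial M;\mathbb{F}) \xrightarrow{i_*^1} H_1(M;\mathbb{F}) \xrightarrow{j'} H_1(M,\partial M;\mathbb{F}) \xrightarrow{\partial_*} H_0(\partial M;\mathbb{F}) \xrightarrow{i_*^0} H_0(M;\mathbb{F})
\]
of the long exact sequence. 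Since $M$ and $\partial M$ are both connected, $i_*^0$ is an isomorphism of one-dimensional $\mathbb{F}$-vector spaces, which forces $\partial_* = 0$ and hence makes $j'$ surjective. Rank-nullity then yields $\mathrm{rk}(i_*^1) = b_1 - b_1'$. Applying Poincar\'e--Lefschetz duality together with universal coefficients over the field $\mathbb{F}$ identifies $H_1(M,\partial M;\mathbb{F}) \cong H^2(M;\mathbb{F}) \cong H_2(M;\mathbb{F})^*$, so $b_1' = b_2$ and therefore $\mathrm{rk}(i_*^1) = b_1 - b_2$.

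To close the computation I would use the standard fact that $\chi(M) = \tfrac{1}{2}\chi(\partial M)$ for any compact odd-dimensional manifold. Since $\chi(T^2)=0$ this gives $\chi(M) = 0$, and combined with $b_0=1$ (by connectedness) and $b_3=0$ (as $\partial M \neq \emptyset$) we obtain $1 - b_1 + b_2 = 0$, i.e.\ $b_1 - b_2 = 1$. Substituting back gives $\mathrm{rk}(i_*^1) = 1$, as required.

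The only delicate point is the invocation of Poincar\'e--Lefschetz duality with arbitrary field coefficients, which is automatic when $M$ is orientable---the setting relevant for the knot-manifold applications later in this section. For non-orientable $M$ one could either restrict to $\mathbb{F} = \mathbb{Z}/2\mathbb{Z}$, pass to the orientation double cover, or work with twisted coefficients, but none of this poses a real obstacle; the argument is essentially a bookkeeping exercise on the long exact sequence once duality is in hand.
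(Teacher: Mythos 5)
Your argument is essentially identical to the paper's: the same fragment of the long exact sequence of $(M,\partial M)$, the same rank--nullity bookkeeping giving $\mathrm{rk}(i_*^1)=b_1-b_1'$, the same duality identification $b_1'=b_2$, and the same doubling/Euler-characteristic computation forcing $b_1-b_2=1$. Your caveat about orientability is in fact warranted and worth recording: the paper invokes ``Universal Coefficient Theorems and duality'' without comment, yet the statement genuinely requires orientability of $M$ (for the M\"obius band times $S^1$, which is non-orientable with torus boundary, the map $i_*^1$ is injective over $\mathbb{Q}$ and its image has rank two), although this is harmless for the paper since every application is to an orientable manifold.
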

\begin{proof}
Considering the pair $(M, \partial M)$ we have the following long exact sequence,

%$\cdots \longrightarrow H_1(\partial M; \mathbb{F})\stackrel{i_*^1}{\longrightarrow} H_1(M; \mathbb{F})\stackrel{p_*^1}{\longrightarrow} H_1(M, \partial M; \mathbb{F})\stackrel{\partial_1}{\longrightarrow} H_0(\partial M; \mathbb{F})\stackrel{i_*^0}{\longrightarrow} H_0(M; \mathbb{F})\stackrel{\partial_0}{\longrightarrow}\\

% H_0(M, \partial M; \mathbb{F})\longrightarrow 0.$
 
 $\cdots \longrightarrow H_1(\partial M; \mathbb{F})\stackrel{i_*^1}{\longrightarrow} H_1(M; \mathbb{F})\stackrel{p_*^1}{\longrightarrow} H_1(M, \partial M; \mathbb{F})\stackrel{\partial_1}{\longrightarrow} H_0(\partial M; \mathbb{F})\stackrel{i_*^0}{\longrightarrow} H_0(M; \mathbb{F})\stackrel{\partial_0}{\longrightarrow} 0.$

 Since $M$ is connected, it is also path connected, and hence $ H_0(M, \partial M; \mathbb{F})=0$. By exactness, we have that $im(i_*^1)=ker(p^1_*)$, $im(p_*^1)=ker(\partial_1)$, $im(\partial_1)=ker(i_*^0)$ and $im(i_*^0)=H_0(M; \mathbb{F})$. By the Rank-Nullity Theorem,
 $$dim_{\mathbb{F}}(im(i_*^1)) =dim_{\mathbb{F}}(H_1(M; \mathbb{F})) -dim_{\mathbb{F}}(H_1(M, \partial M; \mathbb{F}))+dim_{\mathbb{F}}(H_0(\partial M; \mathbb{F}))-dim_{\mathbb{F}}(H_0(M; \mathbb{F})).$$
 Since $M$ is compact, by Universal Coefficient Theorems and duality we have that $$dim_{\mathbb{F}}(H_1(M, \partial M; \mathbb{F}))=dim_{\mathbb{F}}(H_2(M; \mathbb{F})).$$ Hence,
 \begin{align*}
 dim_{\mathbb{F}}(im(i_*^1)) &=dim_{\mathbb{F}}(H_1(M; \mathbb{F})) - dim_{\mathbb{F}}(H_1(M, \partial M; \mathbb{F})) + dim_{\mathbb{F}}(H_0(\partial M; \mathbb{F})) - dim_{\mathbb{F}}(H_0(M; \mathbb{F}))\\
% &= -(dim_{\mathbb{F}}(H_0(M; \mathbb{F}) - dim_{\mathbb{F}}(H_1(M; \mathbb{F})) + dim_{\mathbb{F}}(H_2(M; \mathbb{F})) - dim_{\mathbb{F}}(H_3(M; \mathbb{F})))\\
% &- dim_{\mathbb{F}}(H_3(M; \mathbb{F})) + dim_{\mathbb{F}}(H_0(\partial M; \mathbb{F}))\\
 &= -\chi(M) + dim_{\mathbb{F}}(H_3(M; \mathbb{F})) + dim_{\mathbb{F}}(H_0(\partial M; \mathbb{F})).
 \end{align*}
 Since the boundary of $M$ is not empty, $dim_{\mathbb{F}}(H_3(M; \mathbb{F}))=0$. Thus,
 $$dim_{\mathbb{F}}(im(i_*^1))=-\chi(M) + dim_{\mathbb{F}}(H_0(\partial M; \mathbb{F})).$$
 
From this, we note that $\chi(M) = \frac{1}{2}\chi(\partial M) =0$ and so we conclude that $dim_{\mathbb{F}}(im(i_*^1)) = dim_{\mathbb{F}}(H_0(\partial M; \mathbb{F})) = 1$.
% Let $Y=M\cup_{\partial M\cong \partial M} M$ be the double of $M$. Since $Y$ is a closed 3-manifold, $\chi(Y)=0$. Hence, $0=\chi(Y)=\chi(M)+\chi(M)-\chi(\partial M)$, and so $$\chi(M)=\frac{1}{2}\chi((\partial M))= \frac{1}{2}(dim_{\mathbb{F}}(H_0(\partial M; \mathbb{F})) - dim_{\mathbb{F}}(H_1(\partial M; \mathbb{F})) + dim_{\mathbb{F}}(H_2(\partial M; \mathbb{F}))).$$ 
% Since $\partial M$ is a torus, $$dim_{\mathbb{F}}(H_0( \partial M; \mathbb{F}))=dim_{\mathbb{F}}(H_2(\partial M; \mathbb{F})).$$ Therefore,
% 
% \begin{align*}
%     dim_{\mathbb{F}}(im(i_*^1)) &= -\frac{1}{2}(dim_{\mathbb{F}}(H_0(\partial M; \mathbb{F}) - dim_{\mathbb{F}}(H_1(\partial M; \mathbb{F})) + dim_{\mathbb{F}}(H_0(\partial M; \mathbb{F})))\\ 
%     & + dim_{\mathbb{F}}(H_0(\partial M; \mathbb{F}))\\
%     &= \frac{1}{2}dim_{\mathbb{F}}(H_1(\partial M; \mathbb{F}))\\
%     &= 1.
%\end{align*}
 
\end{proof}

It follows that the image of $i_*^1: H_1(\partial M; \mathbb{Z})\longrightarrow H_1(M; \mathbb{Z})$ is also rank one when $M$ is orientable.  The unique primitive element in $H_1(\partial M; \mathbb{Z})$ whose image is of finite order in $H_1(M; \mathbb{Z})$ is referred to as the \textit{rational longitude} of $M$, and is denoted $\lambda_M$. 

\begin{remark}
Consider the following long exact sequence with $M$ as in the previous lemma.

$\cdots \longrightarrow H_1(\partial M; \mathbb{Z})\stackrel{i_*^1}{\longrightarrow} H_1(M; \mathbb{Z})\stackrel{p_*^1}{\longrightarrow} H_1(M, \partial M; \mathbb{Z})\stackrel{\partial_1}{\longrightarrow} H_0(\partial M; \mathbb{Z})\stackrel{i_*^0}{\longrightarrow} H_0(M; \mathbb{Z})\stackrel{\partial_0}{\longrightarrow}\\
 H_0(M, \partial M; \mathbb{Z})\longrightarrow 0.$
 
Since $M$ and $\partial M$ are both connected, they are also path connected, and hence $ H_0(M, \partial M; \mathbb{Z})=0$, $H_0(\partial M; \mathbb{Z})\cong \mathbb{Z}$ and $H_0(M; \mathbb{Z})=\mathbb{Z}$. Hence $i_*^0$ is an isomorphism, and $im(\partial_1)=ker(i_*^0)=0$. This implies that $im(p_*^1)=ker(\partial_1)=H_1(M, \partial M; \mathbb{Z})$, and that $i_*^1$ is surjective if and only if $H_1(M, \partial M; \mathbb{Z})=0$.  Thus we do not assume surjectivity of $i_*^1$, which necessitates the use of Lemma \ref{extend} in the proofs below.%\textbf{This does not entirely address whether or not the image of $i_*^1$ is generated by a primitive element.}
\end{remark}

\begin{remark}
    If $M$ is a compact, non-orientable, $\mathbb{P}^2$-irreducible $3$-manifold, then $\pi_1(M)$ is left-orderable by \cite[Lemma 3.3 and Theorem 3.1]{BRW}; on the other hand if $M$ is $\mathbb{P}^2$-reducible then our techniques do not apply in general, see Remark \ref{P2 troubles}. Hence, from now on, we assume that all $3$-manifolds are orientable.
\end{remark}

\begin{proposition}
\label{rat_rot_num}
Suppose that $M$ is a knot manifold, let $\mu$ be the class of any closed curve in $\partial M$ that is dual to $\lambda_M$ and let $q$ denote the quotient map $q: \pi_1(M) \rightarrow \pi_1(M(\lambda_M))$.  If $M(\lambda_M)$ is irreducible, then for every circular ordering $c'$ of the cyclic subgroup $\langle q(\mu) \rangle$, there exists a circular ordering $c$ of $\pi_1(M(\lambda_M))$ such that 
\[ c'(q(\mu^j), q(\mu^k), q(\mu^\ell)) = c(q(\mu^j), q(\mu^k), q(\mu^\ell)) 
\]
for all $j, k, \ell \in \mathbb{Z}$.
%for every $r \in \mathbb{S}^1 = \mathbb{R} / \mathbb{Z}$ there exists a circular ordering $c$ of $\pi_1(M(\lambda_M))$ such that $\mathrm{rot}_c(q(\mu)) = r$.
\end{proposition}
\begin{proof}
For such a manifold $M$, it follows that $|H_1(M(\lambda_M); \mathbb{Z})|$ is infinite, with the class of $\mu$ being infinite order by Lemma \ref{rank}.  Therefore there exists a map $\psi: \pi_1(M(\lambda_M)) \rightarrow \mathbb{Z}$ with the image of $\mu$ being nontrivial, say $\psi( \mu ) = k$.  Let $c'$ be a given circular ordering of $\langle q(\mu) \rangle$.

Denote the the standard circular ordering of $ \mathbb{S}^1$ by $d$, and suppose that $\mathrm{rot}_{c'}(q(\mu)) = r$.  Define a map $\phi : k \mathbb{Z} \rightarrow  \mathbb{S}^1$ by $\phi(k) = \exp(2 \pi i r)$, and first suppose that $\mathrm{rot}_{c'}$ is injective.  Then $\phi $ is injective, so we may define a circular ordering $c''$ of $k \mathbb{Z}$ by $c''(r, s, t) = d(\phi(r), \phi(s), \phi(t))$.  Next suppose that $\mathrm{rot}_{c'}$ is not injective, so that $r = \frac{s}{t} \in \mathbb{Q}$ (with $\frac{s}{t}$ in lowest terms) and the circular ordering $c'$ of $\langle q(\mu) \rangle$ is lexicographic with respect to the short exact sequence
\[ 0 \rightarrow \ker(\mathrm{rot}_{c'} ) \rightarrow \langle q(\mu) \rangle \rightarrow \mathbb{Z}/t\mathbb{Z} \rightarrow 0
\]
for some choice of left-ordering of $\ker(\mathrm{rot}_{c'})$.  Then $\phi$ is not injective, and we may use Proposition \ref{sesprop}(2) to create a circular ordering $c''$ of $k \mathbb{Z}$ from the sequence $1 \rightarrow K \rightarrow k\mathbb{Z} \stackrel{\phi}{\rightarrow}  \mathbb{S}^1$, where $K$ is the kernel of $\phi$, such that our choice of left-ordering of $K$ agrees with the left-ordering of $\ker(\mathrm{rot}_{c'} )$ under the isomorphism $k\mathbb{Z} \cong \langle q(\mu) \rangle$ given by $k \mapsto q(\mu)$. 

In either case, by Lemma \ref{extend}, there is a circular ordering $\hat{c}$ of $\psi(\pi_1(M(\lambda_M))) \cong \mathbb{Z}$ that extends $c''$.  That $\pi_1(M(\lambda_M))$ admits a circular ordering $c$ with the required property now follows from Proposition \ref{sesprop}(2) and the proof of Proposition \ref{COS}(2).
\end{proof}
 
Following \cite[p.428--431]{Scot1}, recall that a Seifert fibred space is a $3$-manifold which is foliated by circles, called fibres, such that each circle $S$ has a closed tubular neighborhood which is a union of fibres and is isomorphic to a fibred solid torus if $S$ preserves the orientation or a fibred solid Klein bottle if $S$ reverses the orientation.  We use $h$ throughout to denote the class of the regular fibre.

\begin{proposition}
\label{rat_long_CO}
Suppose that $M_1$ and $M_2$ are compact, connected and orientable $3$-manifolds with torus boundary, that $\phi: \partial M_1 \rightarrow \partial M_2$ is a homeomorphism, and $M = M_1 \cup_{\phi} M_2$ is irreducible.
\begin{enumerate} 
\item If $\mathrm{rk}(H_1(M_1; \mathbb{Q})) \geq 2$ then $\pi_1(M)$ is left-orderable. 
\item  Suppose $\mathrm{rk}(H_1(M_1; \mathbb{Q})) =1$ and let $\lambda_1$ denote the rational longitude of $M_1$. If \\ $\pi_1(M_2(\phi_*(\lambda_1)))$ is infinite and circularly orderable and either $M_1(\lambda_1)$ is irreducible, or $M_1$ is Seifert fibred with incompressible boundary, then  $\pi_1(M)$ is circularly orderable.
\end{enumerate}
\end{proposition}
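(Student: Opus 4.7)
My plan is to show $b_1(M) \geq 1$ via Mayer--Vietoris and then invoke \cite[Theorem 1.1]{BRW} as in the proof of Theorem \ref{LO_covers}. With $\Q$-coefficients the sequence
\[
H_1(\partial M_1;\Q)\xrightarrow{(i_1,i_2)}H_1(M_1;\Q)\oplus H_1(M_2;\Q)\to H_1(M;\Q)\to H_0(\partial M_1;\Q)\to H_0(M_1;\Q)\oplus H_0(M_2;\Q)
\]
has injective rightmost arrow by connectedness of the $M_i$, so $H_1(M;\Q)\cong \mathrm{coker}(i_1,i_2)$. Each projection $i_j$ has rank-one image by Lemma \ref{rank}, so $\mathrm{im}(i_1,i_2)$ has rank at most $2$; combined with $\mathrm{rk}(H_1(M_1;\Q))\geq 2$ and $\mathrm{rk}(H_1(M_2;\Q))\geq 1$ (Lemma \ref{rank} again), this gives $\mathrm{rk}(H_1(M;\Q))\geq 1$. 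Since $M$ is $\P^2$-irreducible, \cite[Theorem 1.1]{BRW} then delivers left-orderability of $\pi_1(M)$.

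\textbf{Part (2).} My plan is to apply Corollary \ref{match rot numbers} with $\alpha=\lambda_1$ and a dual class $\mu_1\in\pi_1(\partial M_1)$. Rational triviality of $\lambda_1$ forces $H_1(M_1(\lambda_1);\Q)\cong H_1(M_1;\Q)$ of rank one, so $\pi_1(M_1(\lambda_1))$ is infinite and $q_1(\mu_1)$ has infinite order; together with the hypothesis that $\pi_1(M_2(\phi_*(\lambda_1)))$ is infinite, this verifies the background hypotheses of Theorem \ref{CO slope}(2), and makes the peripheral quotients on both sides nontrivial cyclic groups that the map $\overline{\phi_*}$ identifies with one another. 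Fix a circular ordering $c_2$ of $\pi_1(M_2(\phi_*(\lambda_1)))$ (exists by hypothesis) and set $r:=\mathrm{rot}_{c_2}(q_2(\phi_*(\mu_1)))$; it suffices to build a circular ordering $c_1$ of $\pi_1(M_1(\lambda_1))$ with $\mathrm{rot}_{c_1}(q_1(\mu_1))=r$. In Case A, where $M_1(\lambda_1)$ is $\P^2$-irreducible, Proposition \ref{rat_rot_num} supplies such a $c_1$ directly for any prescribed $r\in\R/\Z$.

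In Case B, where $M_1$ is Seifert fibred with incompressible boundary, $M_1(\lambda_1)$ need not be $\P^2$-irreducible, so I would argue via the Seifert structure: the rank-one assumption on $H_1(M_1;\Q)$ forces the base orbifold to have genus zero, and the fibre class $h$ must have infinite order in $H_1(M_1)$, so $\lambda_1\neq h$. Consequently the Seifert fibration of $M_1$ extends to a Seifert fibration on $M_1(\lambda_1)$, and Theorem \ref{SFCO} produces circular orderings in which $\mathrm{rot}(h)$ is prescribed. Writing $\mu_1=ah+bs$ in $\pi_1(\partial M_1)$ (with $s$ a section) and using the filling relation $ph+qs=0$ coming from $\lambda_1$, one obtains $\mathrm{rot}_{c_1}(q_1(\mu_1))$ as an explicit linear expression in $\mathrm{rot}_{c_1}(h)$, which translates the flexibility in $\mathrm{rot}_{c_1}(h)$ into flexibility in $\mathrm{rot}_{c_1}(q_1(\mu_1))$.

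The main obstacle is Case B: when Theorem \ref{SFCO} only produces fibre rotation numbers in a restricted subset of $\R/\Z$ (parts (2)--(3)), the induced set of achievable values for $\mathrm{rot}_{c_1}(q_1(\mu_1))$ may not exhaust $\R/\Z$, so I would need either a direct lexicographic construction in the style of Proposition \ref{rat_rot_num} (built from a surjection of $\pi_1(M_1(\lambda_1))$ onto $\Z$ supplied by the nonzero class of $\mu_1$ in $H_1(M_1(\lambda_1);\Q)$, which exists despite the failure of $\P^2$-irreducibility because $M_1$ is locally indicable), or an auxiliary argument exploiting additional flexibility in choosing $c_2$, to confirm that the value $r$ is always attainable.
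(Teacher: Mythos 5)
Your part (1) is correct and is a mild variant of the paper's argument: you compute $b_1(M)\geq 1$ directly from Mayer--Vietoris (using Lemma \ref{rank} to bound the rank of the image of $H_1(T;\Q)$ by $2$ against a target of rank $\geq 3$), whereas the paper builds an explicit quotient of $\pi_1(M)$ onto an abelian group of positive rank by killing the peripheral image in $H_1(M_1;\Z)$ and extending by zero over $\pi_1(M_2)$. Both routes end at the same citation of \cite{BRW}. Your part (2), Case A, coincides with the paper: fix $c_2$, read off $r$, apply Proposition \ref{rat_rot_num} and Corollary \ref{match rot numbers}.

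The genuine gap is in Case B. Your claim that the rank-one hypothesis forces the fibre class $h$ to have infinite order in $H_1(M_1)$, hence $\lambda_1\neq h$, is false. If the base orbifold of $M_1$ is a once-punctured $\P^2$ (a Möbius band with cone points), the relation $aha^{-1}=h^{-1}$ abelianizes to $2h=0$, so $h$ is torsion in $H_1(M_1;\Z)$ while $H_1(M_1;\Q)$ still has rank one (generated by $a$); in this case $\lambda_1=h$, the filling $M_1(h)$ is reducible, and the Seifert fibration does not extend. This is precisely the situation the ``Seifert fibred with incompressible boundary'' alternative in the hypothesis exists to cover --- note that when $\lambda_1\neq h$ the filling is irreducible by \cite{Hei2} and hence $\P^2$-irreducible, so Proposition \ref{rat_rot_num} already applies and your detour through Theorem \ref{SFCO} and rotation numbers of $h$ is unnecessary. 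The paper handles the missed case by writing
\[
\pi_1(M_1(h)) \cong \bigl(\langle \gamma_1 \mid \gamma_1^{\alpha_1}\rangle * \dots * \langle \gamma_n \mid \gamma_n^{\alpha_n}\rangle * \langle \mu \rangle\bigr) *_{\mathbb{Z}} \langle a \rangle,
\]
putting a circular ordering on $\langle\mu\rangle$ with $\mathrm{rot}(\mu)=r$, extending over the free product by Proposition \ref{free product}(2), and then over the amalgamation by Lemma \ref{extend} and Proposition \ref{amalgam_prop}. Your proposed fallback --- a lexicographic construction from a surjection of $\pi_1(M_1(\lambda_1))$ onto $\Z$ --- does not repair this: when the filling is reducible the kernel of such a surjection contains torsion (e.g.\ the conjugates of the $\gamma_i$), so it is not left-orderable and Proposition \ref{sesprop}(2) cannot be applied.
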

\begin{proof} In what follows, let $i :\partial M_1 \rightarrow M_1$ denote the inclusion map. %Note that by \cite[Lemma 3.3]{BRW} we know that $\mathrm{rk}(H_1(M_1; \mathbb{Q})) \geq 1$.  We deal with two separate cases.

 To prove (1), assume $\mathrm{rk}(H_1(M_1; \mathbb{Q})) \geq 2$.  In this case, the image of $H_1(\partial M_1; \mathbb{Z}) \stackrel{i_*}{\rightarrow} H_1(M_1; \mathbb{Z})$ is rank one by Lemma \ref{rank}.  Therefore we may compose the Hurewicz map $\pi_1(M_1) \rightarrow H_1(M; \mathbb{Z})$ with the quotient $H_1(M; \mathbb{Z}) \rightarrow H_1(M; \mathbb{Z})/im(i_*)$ and obtain a map $\psi: \pi_1(M_1) \rightarrow A$, where $A$ is an abelian group of positive rank, satisfying $\psi( i_*(\pi_1( \partial M_1))) = 0$.  In this case, since $M$ is a free product with amalgamation of $\pi_1(M_1)$ and $\pi_1(M_2)$, there exists a surjective homomorphism $\pi_1(M) \rightarrow A$ induced by $\psi : \pi_1(M_1) \rightarrow A$ and the zero homomorphism $\pi_1(M_2) \rightarrow A$.  Thus $\pi_1(M)$ admits a torsion-free abelian quotient, so $\pi_1(M)$ is left-orderable by \cite[Theorem 3.2]{BRW}.

On the other hand, suppose that $\mathrm{rk}(H_1(M_1; \mathbb{Q})) =1$ and let $q_1 : \pi_1(M_1) \rightarrow \pi_1(M_1(\lambda_1))$ and $q_2: \pi_1(M_2) \rightarrow \pi_1(M_2(\phi_*(\lambda_1)))$ denote the quotient maps.  Equip $\pi_1(M_2(\phi_*(\lambda_1)))$ with a circular ordering $c_2$, let $\mu_2$ denote the class of a curve dual to $\phi(\lambda_1)$.  Let $\mu_1$ denote the class of a curve dual to $\lambda_1$.  In this case, if $M_1(\lambda_{1})$ is irreducible then by Proposition \ref{rat_rot_num} we can equip $\pi_1(M_1(\lambda_1))$ with a circular ordering $c_1$ that agrees with $c_2$ under the identification of $\langle q_1(\mu_1) 
\rangle $ and $\langle q_2(\mu_2) \rangle$ induced by $\phi_*$. The result now follows from Corollary \ref{match rot numbers}.

%and suppose that $\mu_2$, the class of a curve dual to $\phi(\lambda_1)$, satisfies $\mathrm{rot}_{c_2}(\mu_2) = r$.  In this case, if $M_1(\lambda_{1})$ is $\mathbb{P}^2$-irreducible then by Proposition \ref{rat_rot_num} we can equip $\pi_1(M_1(\lambda_1))$ with a circular ordering $c_1$ such that the class $\mu_1$ of a curve dual to $\lambda_1$ satisfies $\mathrm{rot}_{c_1}(\mu_1) = r$. The result now follows from Corollary \ref{match rot numbers}.

On the other hand, if $M_1$ is Seifert fibred then $M_1(\lambda_1)$ is irreducible by \cite{Hei2} unless $\lambda_1$ is the class of a regular fibre; so we need only consider the case that $\lambda_1$ is the class of a regular fibre.   For $\lambda_1$ to be the class of a regular fibre, $M_1$ must have nonorientable base orbifold.  In this case, since $\mathrm{rk}(H_1(M_1; \mathbb{Q})) =1$ we know the base orbifold must be a once-punctured projective plane and we compute 
\[\pi_1(M_1) = \langle a, \gamma_1, \dots, \gamma_n, \mu, h \mid a h a^{-1} =h^{-1}, [\gamma_i, h] = 1,  [\mu, h] = 1, \gamma_i^{\alpha_i} = h^{\beta_i}, a^2 \mu \gamma_1 \dots \gamma_n = 1 \rangle
\]
where $\mu$ is a class dual to $\lambda_1 =h$ on $\partial M_1$. Therefore
\[ \pi_1(M_1(\lambda_1)) = \langle a, \gamma_1, \dots, \gamma_n, \mu \mid  \gamma_i^{\alpha_i} = 1, a^2 \mu \gamma_1 \dots \gamma_n = 1 \rangle
\] 
which is isomorphic to the free product with amalgamation
\[  \langle \gamma_1 \mid \gamma_1^{\alpha_1} = 1\rangle * \dots * \langle \gamma_n \mid \gamma_n^{\alpha_n} = 1\rangle * \langle \mu \rangle*_{ \mu \gamma_1 \dots \gamma_n  = a^{-2}} \langle a \rangle.
\]
In this case, we can first equip the infinite cyclic group $ \langle \mu \rangle$ with a circular ordering $c$ that agrees with $c_2$ under the identification of $\langle \mu 
\rangle $ and $\langle q_2(\mu_2) \rangle$ induced by $\phi_*$.  By Proposition \ref{free product} this circular ordering extends to the free product $\langle \gamma_1 \mid \gamma_1^{\alpha_1} = 1\rangle * \dots * \langle \gamma_n \mid \gamma_n^{\alpha_n} = 1\rangle * \langle \mu \rangle$, which in turn extends to a circular ordering of $\pi_1(M_1(\lambda_1))$ by combining Lemma \ref{extend} and Proposition \ref{amalgam_prop}.  Thus the claim follows from Corollary \ref{match rot numbers}.
 \end{proof}

\section{Seifert fibred manifolds and graph manifolds}
\label{seifert and graph}

Aside from this last tool, there are other situations where we can control the rotation numbers of certain elements in circular orderings of fundamental groups.  We next investigate Seifert fibred manifolds, where our goal is to show that the fundamental group of a Seifert fibred space is always circularly orderable whenever it is infinite, and to describe the possible circular orderings in terms of the rotation numbers of the class of a regular fibre.  

\subsection{Seifert fibred manifolds}
It was first claimed by Calegari in \cite[Remark 4.3.2]{Cal} that if $M$ is Seifert fibred and $\pi_1(M)$ is infinite, then it is circularly orderable.  We provide the details of this claim below.

Recall that when $M$ is an orientable Seifert fibred manifold over an orientable closed surface of genus $g \geq 0$, the fundamental group has presentation
\[ \pi_1(M) = \langle a_1, b_1, \dots, a_g, b_g, \gamma_1, \dots, \gamma_n, h \mid \mbox{$h$ central}, \gamma_i^{\alpha_i} = h^{\beta_i}, [a_1, b_1]\dots [a_g, b_g]\gamma_1 \dots \gamma_n = h^b \rangle
\]
and if the surface is nonorientable, then 
\[ \pi_1(M) = \langle a_1, \dots, a_{g}, \gamma_1, \dots, \gamma_n, h \mid a_j h a_j^{-1} =h^{-1}, [\gamma_i, h] = 1, \gamma_i^{\alpha_i} = h^{\beta_i}, a_1^2\dots a_{g}^2\gamma_1 \dots \gamma_n = h^b \rangle.
\]
In either case, quotienting by the normal subgroup $\langle h \rangle$ (the cyclic subgroup generated by the class of the regular fibre) yields the orbifold fundamental group of the underlying orbifold.  Consequently we observe the following lemma.

\begin{lemma}
\label{orbifold_co}
Suppose that $\alpha_1, \dots, \alpha_n \geq 2$ are integers and that $\mathcal{B}$ is an orbifold of type $\mathbb{S}^2(\alpha_1, \dots, \alpha_n)$ or  $\mathbb{P}^2(\alpha_1, \dots, \alpha_n)$. Assume that:
\begin{enumerate}
    \item $n\geq 3$ and $\Sigma_{i=1}^n \frac{1}{\a_i} <n-2$ if $\mathcal{B}=\mathbb{S}^2(\alpha_1, \dots, \alpha_n)$, and
    \item $n\geq 2$ if $\mathcal{B}=\mathbb{P}^2(\alpha_1, \dots, \alpha_n)$.
\end{enumerate}

Then $\pi_1^{orb}(\mathcal{B})$ is circularly orderable.
\end{lemma}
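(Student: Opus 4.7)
The argument splits on whether $\mathcal{B}$ is spherical or projective. For $\mathcal{B} = \S^2(\alpha_1, \dots, \alpha_n)$, the hypotheses $n \geq 3$ and $\sum_{i=1}^n 1/\alpha_i < 1$ give $\chi^{\mathrm{orb}}(\mathcal{B}) = 2 - n + \sum_i 1/\alpha_i < 0$, so $\mathcal{B}$ is a closed orientable hyperbolic $2$-orbifold. Its orbifold fundamental group is therefore a cocompact Fuchsian group, i.e., a discrete subgroup of $\PSL(2,\R)$. The standard faithful orientation-preserving action on $\mathbb{S}^1 = \partial \mathbb{H}^2$ embeds $\pi_1^{\mathrm{orb}}(\mathcal{B})$ into $\mathrm{Homeo}_+(\mathbb{S}^1)$, and pulling back the standard circular ordering of $\mathbb{S}^1$ along any orbit yields the required circular ordering.

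For $\mathcal{B} = \P^2(\alpha_1, \dots, \alpha_n)$ with $n \geq 2$, the plan is to proceed algebraically. The standard presentation
\[ \pi_1^{\mathrm{orb}}(\mathcal{B}) = \langle a, \gamma_1, \dots, \gamma_n \mid \gamma_i^{\alpha_i} = 1,\; a^2 \gamma_1 \cdots \gamma_n = 1 \rangle \]
exhibits $\pi_1^{\mathrm{orb}}(\mathcal{B})$ as an amalgamated free product
\[ \pi_1^{\mathrm{orb}}(\mathcal{B}) \cong F *_{\langle W \rangle = \langle a^{-2} \rangle} \langle a \rangle, \]
where $F = \Z/\alpha_1 * \cdots * \Z/\alpha_n$, $W = \gamma_1 \cdots \gamma_n$, and the amalgamation identifies $W$ with $a^{-2}$. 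Since $W$ is a reduced word of length $n \geq 2$ in the free product $F$, it has infinite order, so $\langle W \rangle \cong \Z$; likewise $\langle a^{-2} \rangle \cong \Z$ inside $\langle a \rangle \cong \Z$. To finish via Proposition \ref{amalgam_prop}, I select compatible circular orderings on each factor: Proposition \ref{free product}(2) supplies a circular ordering $c_F$ of $F$ (each $\Z/\alpha_i$ being finite cyclic, hence circularly orderable); set $r = \mathrm{rot}_{c_F}(W) \in \R/\Z$. Choose any $s \in \R/\Z$ with $-2s = r$ and let $c_a$ be the circular ordering of $\langle a \rangle \cong \Z$ with $\mathrm{rot}_{c_a}(a) = s$, so that $\mathrm{rot}_{c_a}(a^{-2}) = r$. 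Since a circular ordering of an infinite cyclic group is completely determined by the rotation number of a generator, the restrictions of $c_F$ and $c_a$ to $\langle W \rangle$ and $\langle a^{-2} \rangle$ define the same circular ordering of $\Z$ under the identification $W \leftrightarrow a^{-2}$, so Proposition \ref{amalgam_prop} then produces a circular ordering of $\pi_1^{\mathrm{orb}}(\mathcal{B})$.

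The main obstacle is the compatibility check on the amalgamated cyclic subgroup in the $\P^2$ case. The argument succeeds because of the flexibility of circular orderings on infinite cyclic groups: since multiplication by $-2$ is surjective on $\R/\Z$, whatever rotation number happens to arise for $W$ from the chosen circular ordering on $F$ can always be matched by prescribing $\mathrm{rot}_{c_a}(a)$ accordingly. Consequently no finer control over the circular ordering of $F$ (beyond its existence, supplied by Proposition \ref{free product}(2)) is needed, and the only nontrivial algebraic input is the standard fact that a reduced word of length at least two in a free product has infinite order.
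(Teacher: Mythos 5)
Your proposal is correct and follows essentially the same route as the paper: the hyperbolicity of $\mathbb{S}^2(\alpha_1,\dots,\alpha_n)$ giving a Fuchsian (hence circularly orderable) orbifold group in the first case, and the decomposition of $\pi_1^{orb}(\mathbb{P}^2(\alpha_1,\dots,\alpha_n))$ as $(\mathbb{Z}/\alpha_1\mathbb{Z} * \cdots * \mathbb{Z}/\alpha_n\mathbb{Z}) *_{\langle \gamma_1\cdots\gamma_n\rangle = 2\mathbb{Z}} \mathbb{Z}$ combined with Proposition \ref{free product}(2) and Proposition \ref{amalgam_prop} in the second. The only difference is that you spell out the rotation-number matching needed to verify the compatibility hypothesis of Proposition \ref{amalgam_prop}, a step the paper leaves implicit.
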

\begin{proof}

{\bf Case 1:} Assume that $\mathcal{B} = \mathbb{S}^2(\alpha_1, \dots, \alpha_n)$, $n\geq 3$ and $\Sigma_{i=1}^n \frac{1}{\a_i} <n-2$. Then as the orbifold $\mathcal{B}$ is hyperbolic, the group $ \pi_1^{orb}(\mathcal{B}) $ embeds in $\PSL_2(\mathbb{R})$.  As $\PSL_2(\mathbb{R})$ embeds in $\mathrm{Homeo}_+(\mathbb{S}^1)$, it is circularly orderable, so the result follows.

{\bf Case 2}: Assume that $\mathcal{B}= \mathbb{P}^2(\alpha_1, \dots, \alpha_n)$ and $n\geq 2$. For the group $\mathbb{Z}/\alpha_1 \mathbb{Z} * \dots * \mathbb{Z}/\alpha_n \mathbb{Z}$, suppose that the generator of $\mathbb{Z} /\alpha_i \mathbb{Z}$ is $x_i$, and use $x$ to denote the product $x_1 \dots x_n$.  Then
\[ \pi_1^{orb}(\mathcal{B}) = (\mathbb{Z}/\alpha_1 \mathbb{Z} * \dots * \mathbb{Z}/\alpha_n \mathbb{Z})*_{\langle x \rangle = 2 \mathbb{Z}} \mathbb{Z},
\]
and the group $\mathbb{Z}/\alpha_1 \mathbb{Z} * \dots * \mathbb{Z}/\alpha_n \mathbb{Z}$ is circularly orderable by Proposition \ref{free product}.  That $\pi_1^{orb}(\mathcal{B})$ is circularly orderable then follows from Proposition \ref{amalgam_prop}.
\end{proof}

Last, we note the following holds for all left-ordered groups admitting a cofinal, central element.

\begin{proposition}
\label{cofinal_central_gives_every_r}
Suppose that $(G,<)$ is a left-ordered group, and that $z \in G$ is a cofinal, central element.  Then for every $p \in \mathbb{N}_{>0}$, the group $G$ admits a circular ordering $c$ such that $\mathrm{rot}_c(z) = 1/p$.
\end{proposition}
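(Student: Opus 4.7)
The plan is to use the central subgroup $\langle z^p \rangle$ as the kernel of a short exact sequence, and to combine Construction \ref{quotient_construction} with the lexicographic recipe of Proposition \ref{sesprop}(2). First, I would observe that the reverse $<^{op}$ of a left-ordering is again a left-ordering (left-invariance is clearly preserved) and that $z$ remains central and cofinal with respect to $<^{op}$. Hence, replacing $<$ by $<^{op}$ if necessary, I may assume $z > id$. Since $z$ is cofinal in a nontrivial left-ordered group it has infinite order, so $z^p > id$ is likewise positive, central, and cofinal, with $\langle z^p \rangle \cong \mathbb{Z}$.

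Next, I would apply Construction \ref{quotient_construction} to $(G,<)$ with positive cofinal central element $z^p$ in order to obtain a circular ordering $c_<$ on the quotient $G/\langle z^p\rangle$. Proposition \ref{sesprop}(2) applied to the short exact sequence
\[
1 \longrightarrow \langle z^p \rangle \longrightarrow G \xrightarrow{\,q\,} G/\langle z^p \rangle \longrightarrow 1,
\]
using the restriction of $<$ to $\langle z^p \rangle$ and the circular ordering $c_<$ on the quotient, then yields a circular ordering $c$ on $G$ with the property that $\mathrm{rot}_c(g) = \mathrm{rot}_{c_<}(q(g))$ for every $g \in G$. In particular $\mathrm{rot}_c(z) = \mathrm{rot}_{c_<}(q(z))$, and the problem reduces to showing that this quantity equals $1/p$.

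For the rotation number computation, I would use that Constructions \ref{lift_construction} and \ref{quotient_construction} are mutually inverse: the left-ordered central extension $\widetilde{(G/\langle z^p \rangle)}_{c_<}$ is canonically identified with $(G,<)$ in such a way that the canonical positive cofinal central element corresponds to $z^p$ and any lift of $q(z)$ corresponds to $z \in G$ (taking $p \geq 2$; the case $p = 1$ is immediate since $q(z) = id$ and $0 \equiv 1/1 \pmod{\mathbb{Z}}$). Computing rotation number directly from the definition, I would bracket powers of the lift between consecutive powers of the cofinal element: because $z > id$, the inequalities $z^{p\lfloor n/p\rfloor} \leq z^n < z^{p(\lfloor n/p\rfloor+1)}$ hold in $(G,<)$, so $a_n = \lfloor n/p \rfloor$ and $\mathrm{rot}_{c_<}(q(z)) = \lim_{n\to\infty}\lfloor n/p\rfloor/n = 1/p$. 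The main obstacle is making the identification of $\widetilde{(G/\langle z^p \rangle)}_{c_<}$ with $(G,<)$ precise, which amounts to checking that multiplication of minimal coset representatives in $G$ produces the same $\{0,1\}$-valued ``carries'' as the inhomogeneous cocycle $f_{c_<}$ on the quotient; once this is established the rotation number calculation is a routine unwinding of definitions.
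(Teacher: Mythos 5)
Your proposal follows essentially the same route as the paper: equip $G/\langle z^p\rangle$ with the circular ordering coming from Construction \ref{quotient_construction} applied to the positive cofinal central element $z^p$, then pull it back to $G$ lexicographically via Proposition \ref{sesprop}(2) and check that $z$ acquires rotation number $1/p$. The only differences are cosmetic and in your favour: you explicitly normalize $z>id$ by reversing the ordering if necessary, and you carry out the rotation-number computation (via the inverse identification of $\widetilde{(G/\langle z^p\rangle)}_{c_<}$ with $(G,<)$ and $a_n=\lfloor n/p\rfloor$) that the paper delegates to a citation of \cite[Proof of Theorem 6.2]{BaC}.
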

\begin{proof}
Let $p \in  \mathbb{N}_{>0}$ be given, and first construct a circular ordering $d$ of $G/\langle z^p \rangle$ by mimicking Construction \ref{quotient_construction} as follows.   Define the minimal representative $\overline{g}$ of each $g\langle z^p \rangle \in G/\langle z^p \rangle$ to be the unique coset representative satisfying $id \leq \overline{g} < z^{p}$, and set
\[ d(g_1\langle z^p \rangle, g_2\langle z^p \rangle, g_3\langle z^p \rangle) = \mathrm{sign}(\sigma), 
\]
where $\sigma$ is the unique permutation such that the minimal representatives satisfy $\overline{g_{\sigma(1)}} < \overline{g_{\sigma(2)}} < \overline{g_{\sigma(3)}}$. Then construct a circular ordering $c$ of $G$ by using the short exact sequence 
\[ 1 \rightarrow \langle z^p \rangle  \rightarrow G \rightarrow G/\langle z^p \rangle \rightarrow 1.
\]
the orderings $<$ and $d$ of the kernel and quotient respectively, and Lemma \ref{sesprop}.  The circular ordering $c$ satisfies $\mathrm{rot}_c(z) = 1/p$ (c.f  \cite[Proof of Theorem 6.2]{BaC}). %(\textbf{probably expand on this}).
\end{proof}

\begin{proposition}
\label{cofinal_ordering}
Suppose that $M$ is a compact, connected orientable Seifert fibred space with base orbifold $\mathbb{S}^2(\alpha_1, \dots, \alpha_n)$ where $n\geq 3$, and let $h$ denote the class of a regular fibre in $\pi_1(M)$.  If $\pi_1(M)$ is left-orderable, then $\pi_1(M)$ admits a left-ordering relative to which $h$ is cofinal.
\end{proposition}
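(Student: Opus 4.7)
The plan is to exploit the central extension
\[ 1 \longrightarrow \langle h \rangle \longrightarrow \pi_1(M) \longrightarrow \pi_1^{orb}(\mathcal{B}) \longrightarrow 1 \]
arising from the Seifert fibration of $M$ over its base orbifold $\mathcal{B}$; here the orientability hypotheses on $M$ and $\mathcal{B}$ ensure that $\langle h \rangle$ is central.  By the reversibility of Constructions \ref{lift_construction} and \ref{quotient_construction}, a left-ordering of $\pi_1(M)$ in which $h$ is positive and cofinal is equivalent to a circular ordering of $\pi_1^{orb}(\mathcal{B})$ whose Euler class in $H^2(\pi_1^{orb}(\mathcal{B}); \mathbb{Z})$ equals the extension class of the above sequence.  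The problem therefore reduces to exhibiting a circular ordering on $\pi_1^{orb}(\mathcal{B})$ with the correct Euler class.

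I would then proceed by cases on the base orbifold $\mathcal{B}$.  When $\mathcal{B}$ has nonempty boundary, $\pi_1^{orb}(\mathcal{B})$ is a free product of a free group and finitely many finite cyclic groups, and $H^2$ decomposes accordingly.  Using Proposition \ref{free product} to combine circular orderings of the factors and Lemma \ref{extend} to adjust the Euler class contribution from each cyclic factor, one can realize any prescribed class in $H^2(\pi_1^{orb}(\mathcal{B}); \mathbb{Z})$.  When $\mathcal{B}$ is closed and hyperbolic, $\pi_1^{orb}(\mathcal{B})$ is a Fuchsian group embedded in $\mathrm{PSL}(2,\mathbb{R}) \subset \mathrm{Homeo}_+(\mathbb{S}^1)$, and Fuchsian deformations realize each integer Euler class up to the Milnor--Wood bound; the left-orderability hypothesis on $\pi_1(M)$ is invoked here to guarantee that $e(M)$ lies within this admissible range.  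The few remaining closed Euclidean and spherical cases with infinite $\pi_1^{orb}(\mathcal{B})$ are dealt with by direct inspection using the explicit presentations from Section \ref{seifert and graph}.

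Finally, applying Construction \ref{lift_construction} to the CO obtained in the previous step produces a left-ordered central extension $\widetilde{\pi_1^{orb}(\mathcal{B})}_c$, which is isomorphic to $\pi_1(M)$ because the Euler classes have been arranged to coincide; the canonical positive cofinal central element of this extension corresponds to $h$, yielding the required left-ordering.  The chief technical obstacle in this plan is the closed hyperbolic step: one must both leverage left-orderability of $\pi_1(M)$ to force $e(M)$ into the Milnor--Wood range for $\pi_1^{orb}(\mathcal{B})$, and then invoke the realization of each Euler class in this range by an appropriate representation of $\pi_1^{orb}(\mathcal{B})$ into $\mathrm{Homeo}_+(\mathbb{S}^1)$.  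By contrast, the free-product case reduces to routine combinatorial manipulation of circular orderings on cyclic groups, and the finitely many exceptional closed base orbifolds can each be handled by inspection of the given left-ordering of $\pi_1(M)$.
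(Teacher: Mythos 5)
Your reformulation of the problem is correct and genuinely different from the paper's argument: the paper kills the generators $a_i, b_i, x_i$ to surject onto the fundamental group of a closed Seifert fibred space over $\mathbb{S}^2$, quotes \cite[Proposition 4.7]{BC} for that case, and finishes with a lexicographic construction; you instead use the reversibility of Constructions \ref{lift_construction} and \ref{quotient_construction} to convert the statement into the existence of a circular ordering of $\pi_1^{orb}(\mathcal{B})$ whose Euler class equals the class of the extension $1 \to \langle h \rangle \to \pi_1(M) \to \pi_1^{orb}(\mathcal{B}) \to 1$. That translation is the ``right'' way to see what is at stake, but the realization step you then sketch does not go through.

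Two concrete problems. First, in the free-product case it is not true that every class in $H^2(\pi_1^{orb}(\mathcal{B});\mathbb{Z})$ is the Euler class of a circular ordering: the restriction of $[f_c]$ to a finite cyclic free factor $\mathbb{Z}/\alpha_i\mathbb{Z}$ must be a \emph{generator} of $H^2(\mathbb{Z}/\alpha_i\mathbb{Z};\mathbb{Z})$, since otherwise the central extension $\widetilde{G}_c$ would contain torsion and could not be left-orderable. (This happens to be harmless here, because $\gcd(\alpha_i,\beta_i)=1$ makes the class you need a generator, but the claim as written is false.) Second, and more seriously, the step ``leverage left-orderability of $\pi_1(M)$ to force $e(M)$ into the Milnor--Wood range'' is not an omitted verification but a false implication in the generality you need it. When $\mathcal{B}$ is a closed orientable orbifold of positive genus, $b_1(M) \geq 2g > 0$, so $\pi_1(M)$ is left-orderable no matter what the Euler number is; meanwhile $f_c$ is a bounded cocycle, so the Euler class of \emph{any} circular ordering of $\pi_1^{orb}(\mathcal{B})$ obeys a Milnor--Wood bound. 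For the circle bundle over a genus-two surface with Euler number $100$, your own equivalence then shows that no circular ordering of $\pi_1(\Sigma_2)$ has the required Euler class, so the strategy cannot produce the desired left-ordering there. The only setting in which left-orderability of $\pi_1(M)$ genuinely constrains the Euler data is the closed genus-zero base, and there the needed statement --- that left-orderability forces the Seifert invariants to be realizable by a circle action with the prescribed rotation numbers $\beta_i/\alpha_i$ at the cone points --- is exactly the Eisenbud--Hirsch--Neumann/Jankins--Neumann/Naimi realization theorem that \cite[Proposition 4.7]{BC} packages; compressing it into ``Fuchsian deformations realize each Euler class up to Milnor--Wood'' skips both the hard implication and the matching of the torsion part of the class.
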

\begin{proof} This result is essentially a restatement of \cite[Proposition 4.7]{BC}.  
%On the other hand, if $M$ is arbitrary with orientable base orbifold, then  
%\begin{multline*} \pi_1(M) = \langle a_1, b_1, \dots, a_g, b_g, x_1, \dots, x_m, \gamma_1, \dots, \gamma_n, h \mid \mbox{$h$ central}, \\ \gamma_i^{\alpha_i} = h^{\beta_i}, [a_1, b_1]\dots [a_g, b_g] x_1 \dots x_m \gamma_1 \dots \gamma_n = h^b \rangle.
%\end{multline*}
%Note that there is a surjective homomorphism from $\pi_1(M)$ onto the fundamental group of a Seifert fibred manifold with base orbifold $\S^2$ (by mapping the generators $x_i$, $a_i$ and $b_i$ to the identity), so that this final case follows from the previous paragraph and a lexicographic construction.
\end{proof}

\begin{proof}[Proof of Theorem \ref{SFCO}] We first prove (1).  By \cite[Theorem 1.3]{BRW}, $\pi_1(M)$ is left-orderable whenever the first Betti number $b_1(M)$ is positive and $M \not \cong \mathbb{P}^2 \times \mathbb{S}^1$.  When $M= \mathbb{P}^2 \times \mathbb{S}^1$, the fundamental group $\pi_1(M)$ clearly admits a circular ordering of the required kind, via a lexicographic construction.  This proves the claim for Seifert fibred manifolds $M$ with $b_1(M)>0$.

Thus we can assume that $b_1(M)=0$, which implies that $M$ is closed and orientable (See, e.g. \cite[Lemma 3.3]{BRW}), and the base orbifold of $M$ is either $\mathbb{S}^2(\a_1, \a_2, \dots, \a_n)$ or $\mathbb{P}^2(\a_1, \a_2, \dots, \a_n)$.  

Assume that either condition (1) or (2) of Lemma \ref{orbifold_co} holds.  Since we also assume that $\pi_1(M)$ is infinite, the class of the fibre $h$ is of infinite order (See, e.g.  \cite[Proposition 4.1(1)]{BRW}).  Therefore from the short exact sequence
\[1 \rightarrow \langle h \rangle \rightarrow \pi_1(M) \rightarrow \pi_1^{orb}(\mathcal{B}) \rightarrow 1
\]
we can lexicographically construct the required circular ordering of $\pi_1(M)$ using Lemma \ref{orbifold_co} and Proposition \ref{sesprop}, completing the proof in these cases.

For the remaining cases, first suppose that $\mathcal{B} = \mathbb{S}^2(\a_1, \a_2, \dots, \a_n)$ and $\Sigma_{i=1}^n \frac{1}{\a_i}  \geq n-2$.  Note that necessarily $n \leq 4$, and our assumption that $\pi_1(M)$ is infinite and $b_1(M)=0$ rules out $n=0$, $n=1$, $n=2$.  When $n=3$ and $\frac{1}{\a_1}+\frac{1}{\a_2}+\frac{1}{\a_3} > 1$ the group $\pi_1(M)$ is finite, so we need not consider this case.  On the other hand when $\Sigma_{i=1}^n \frac{1}{\a_i}  = n-2$ then $\pi_1(M)$ has infinite abelianization (\cite[Proposition 2]{Hei1} and \cite[VI.13  Example]{Ja}), and so is left-orderable.  Last, if $M$ has base orbifold $\mathbb{P}^2$ or $\mathbb{P}^2(\a_1)$ then $\pi_1(M)$ is finite, $\mathbb{Z}$ or $\mathbb{Z}_2*\mathbb{Z}_2$ \cite[VI.11.(c)]{Ja}. Since we have assumed that $\pi_1(M)$ is infinite, $\pi_1(M)$ is either $\mathbb{Z}$ or $\mathbb{Z}_2*\mathbb{Z}_2$ which is circularly in both cases.   We construct a circular ordering of $\pi_1(M)$ for which $\mathrm{rot}(h)$ is zero in either case as follows: When $\pi_1(M)$ is $\mathbb{Z}$ we equip $\mathbb{Z}$ with a secret left-ordering, and when $\pi_1(M) \cong \mathbb{Z}_2*\mathbb{Z}_2 \cong \langle a, h \mid a^2=id, aha^{-1} = h^{-1}\rangle$ we circularly order $\pi_1(M)$ lexicographically using 
\[ 1 \rightarrow \langle h \rangle \rightarrow \pi_1(M) \rightarrow \mathbb{Z}/2\mathbb{Z} \rightarrow 1.
\]

We observe that (2) follows from the defining relations of the fundamental group $\pi_1(M)$.  When $M$ has nonorientable base orbifold and admits at least one exceptional fibre, then the relation $\gamma h \gamma^{-1} = h^{-1}$ holds in $\pi_1(M)$.  As rotation number is invariant under conjugation, we get that $\mathrm{rot}_c(h) = \mathrm{rot}_c(h^{-1})$ for every circular ordering $c$ of $\pi_1(M)$, from which it follows that $\mathrm{rot}_c(h) \in \{0, 1/2\}$.

To prove (3), suppose that $\pi_1(M)$ is left-orderable.   Using Proposition \ref{cofinal_ordering}, choose a left-ordering of $\pi_1(M)$ relative to which the class of the fibre is cofinal.  The result now follows from Proposition \ref{cofinal_central_gives_every_r}.

Finally, (4) follows from the short exact sequence of the fibration
\[ 1 \rightarrow \pi_1(F) \rightarrow \pi_1(M) \rightarrow \pi_1(\mathbb{S}^1) \rightarrow 1
\] 
and the observation that we may use any circular ordering of $\pi_1(\mathbb{S}^1) \cong \mathbb{Z}$ we please in applying Lemma \ref{sesprop}.
\end{proof}

\subsection{Graph manifolds}

We begin with a few preliminaries to establish notation and some well-known facts.  Recall that every compact, orientable, irreducible $3$-manifold $M$ admits a unique minimal family of disjoint incompressible tori $\mathcal{T}$ such that $M \setminus \mathcal{T}$ consists of Seifert fibred $3$-manifolds and atoroidal $3$-manifolds, called the JSJ decomposition.  By a graph manifold, we mean a compact, connected, orientable, irreducible $3$-manifold admitting a JSJ decomposition into Seifert fibred pieces.  By using the tori of the JSJ decomposition to cut our graph manifolds into Seifert fibred pieces, we know that the collection of such tori is minimal for each graph manifold, and each torus is incompressible and thus $\pi_1$-injective.

Note that it follows from Proposition \ref{COS} that graph manifolds with infinite first homology always have left-orderable fundamental group, in particular their fundamental groups are always circularly orderable.  Thus when it comes to circular orderability, we will only consider the case of rational homology sphere graph manifolds.  

\begin{lemma}\label{graph1}
 Let $W$ be a graph manifold with a torus boundary that is not homeomorphic to a Seifert fibred manifold. If $\alpha \in H_1(\partial W; \mathbb{Z})/\{ \pm 1\}$ is not the slope of a regular fibre, then $W(\a)$ is a graph manifold.
\end{lemma}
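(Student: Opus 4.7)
The plan is to trace through how the JSJ decomposition of $W$ transforms under the Dehn filling, and to proceed by induction on the number of JSJ pieces of $W$.

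First, let $M$ denote the Seifert fibred piece of the JSJ decomposition of $W$ that contains $\partial W$. Since $W$ is not Seifert fibred, the JSJ decomposition is nontrivial, so $M$ has at least one additional boundary component, which is a JSJ torus $T$ shared with some other Seifert fibred piece $M_1$. Denote the remaining Seifert pieces of $W$ by $M_1,\ldots,M_k$ and let $\mathcal{T}'$ denote the collection of JSJ tori of $W$. Because $\alpha$ is not the slope of a regular fibre of $M$, $\alpha$ has nonzero intersection number with the fibre slope $h_M$ on $\partial W$, so the Seifert fibration on $M$ extends uniquely across the filling to a Seifert fibration on $M(\alpha)$, with the filling solid torus becoming a fibred solid torus (regular if $\alpha$ intersects $h_M$ in $\pm 1$, exceptional otherwise). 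Thus $M(\alpha)$ is Seifert fibred and $W(\alpha) = M(\alpha)\cup M_1\cup \cdots \cup M_k$ is exhibited as a gluing of Seifert fibred pieces along the tori of $\mathcal{T}'$.

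The second step is to verify that the tori in $\mathcal{T}'$ remain incompressible in $W(\alpha)$ and that $W(\alpha)$ is irreducible. For a torus in $\mathcal{T}'$ not adjacent to $M$, it bounds unchanged Seifert fibred pieces on both sides, so its incompressibility in $W$ transfers to $W(\alpha)$. For a torus $T'\in\mathcal{T}'$ adjacent to $M$, it is a boundary component of $M(\alpha)$ and is vertical in the extended Seifert fibration, so $T'$ is incompressible in $M(\alpha)$ provided $M(\alpha)$ is not itself a solid torus. Granting this, $W(\alpha)$ decomposes as a union of irreducible Seifert fibred pieces glued along incompressible tori, from which irreducibility of $W(\alpha)$ follows by a standard Mayer--Vietoris / innermost disk argument, and so $W(\alpha)$ is a graph manifold.

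The hardest case, and the main obstacle, is the degenerate situation where $M(\alpha)$ is a solid torus. This forces $M$ to have exactly two boundary tori ($\partial W$ and $T$) and base orbifold an annulus with at most one cone point. In this case, $W(\alpha)$ is obtained from $W\setminus \mathrm{int}(M)$ by Dehn filling along $T$ with the meridian slope $\beta$ of the solid torus $M(\alpha)$, and the graph manifold $W\setminus \mathrm{int}(M)$ has strictly fewer JSJ pieces than $W$. The induction hypothesis then finishes the argument once we know $\beta$ is not the fibre slope of $M_1$ on $T$. To establish this, I would use that $T$ being a JSJ torus of $W$ forces $h_M \neq h_{M_1}$ as slopes on $T$, combined with the structure of $M(\alpha)$ as a fibred solid torus (whose meridian $\beta$ is constrained to lie in the orbit of a section of the fibration on $M$ restricted to $T$), to conclude $\beta \neq h_{M_1}$, with the base case of the induction handled by the fact that a Dehn filling of a Seifert fibred manifold along a non-fibre slope is Seifert fibred.
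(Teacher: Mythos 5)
Your main line of argument coincides with the paper's: since $\alpha$ is not the fibre slope of the outermost piece $M$, the Seifert fibration extends over the filling torus, $M(\alpha)$ is Seifert fibred with nonempty boundary and hence irreducible, and $W(\alpha)$ is presented as a union of Seifert pieces along the old JSJ tori (the paper simply cites Heil and Daverman--Sher for this and stops). Where you go further is in asking whether those tori remain incompressible, and you have correctly isolated the one configuration where this can fail: when $M(\alpha)$ is a solid torus, i.e.\ when $M$ has exactly two boundary tori and base orbifold an annulus with at most one cone point, so that $W(\alpha)$ is really the Dehn filling of $W\setminus\mathrm{int}(M)$ along the meridian $\beta$ of $M(\alpha)$. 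The paper's proof does not address this case at all.

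The gap is in your resolution of that case. Minimality of the JSJ decomposition gives $\phi_*(h_M)\neq h_{M_1}$ on $T$, and the fibred-solid-torus structure gives $\beta\neq h_M$, but neither fact prevents $\phi_*(\beta)=h_{M_1}$: since $\beta\neq h_M$, one may always choose an (orientation-reversing) gluing $\phi$ carrying $\beta$ to $h_{M_1}$ while still having $\Delta(\phi_*(h_M),h_{M_1})=\Delta(h_M,\beta)\geq 2\neq 0$, so the fibrations still fail to match and $T$ is still a JSJ torus of $W$. Concretely, take $M$ Seifert fibred over the annulus with one cone point of order $2$ (a cable space), take $M_1$ a trefoil exterior (base orbifold $D^2(2,3)$, fibre slope $h_{M_1}$), and glue so that the meridian of the solid torus $M(\alpha)$, for any $\alpha$ with $\Delta(\alpha,h_M)=1$, is identified with $h_{M_1}$. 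Then $W$ is a non-Seifert-fibred graph manifold with torus boundary and $\alpha$ is not a fibre slope, yet $W(\alpha)=M_1(h_{M_1})\cong L(2,1)\# L(3,2)$ is reducible. So the inductive step you propose cannot be completed, and in fact the degenerate case is a genuine exception to the statement as written; the same configuration also defeats the paper's own proof, which implicitly assumes the JSJ tori of $W$ stay incompressible after the filling. A correct version of the lemma must either exclude the case where the outermost piece fills to a solid torus, or impose a hypothesis on where its meridian lands on the adjacent piece.
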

\begin{proof} Let $M_1, M_2, \dots ,M_p$ be the Seifert pieces of the JSJ-decomposition of $W$. Assume that the JSJ-decomposition of $W$ has only two pieces $M_1$ and $M_2$. Without loss of generality, assume that the torus boundary of $W$ is contained in $M_2$. Since $\a$ is not the slope of a regular fibre, $M_2(\a)$ is a Seifert fibred space with boundary $\partial M_2\setminus \partial W$, by \cite{Hei2} and \cite[Theorem 5.1]{DSh}. Therefore, $M_2(\a)$ is irreducible (because it is not $\mathbb{S}^1\times\S^2$ or $\mathbb{S}^1\tilde{\times}\S^2$ or $\mathbb{P}^3\#\mathbb{P}^3$ \cite[Proposition 4.1 (3)]{BRW}). Hence, $W(\a)$ is a graph manifold with Seifert pieces $M_1$ and $M_2(\a)$. Now assume that $p>2$. Let $M_k$ be the Seifert piece of $W$ containing the torus boundary of $W$. Since $\a$ is not the slope of a regular fibre, $M_k(\a)$ is a Seifert fibred space with boundary $\partial M_k\setminus \partial W$, by \cite{Hei2} and \cite[Theorem 5.1]{DSh}. Therefore, $M_k(\a)$ is irreducible (because it is not $\mathbb{S}^1\times\S^2$ or $\mathbb{S}^1\tilde{\times}\S^2$ or $\mathbb{P}^3\#\mathbb{P}^3$ \cite[Proposition 4.1 (3)]{BRW}).
Hence $W(\a)$ is a graph manifold with Seifert pieces $M_1$, $M_2,$ $\dots,$ $M_k(\a),$ $\dots,$ $M_p$.
\end{proof}

\begin{lemma}
\label{reg_fibre}
Suppose that $W$ is a graph manifold and that $\partial W$ is a torus.  Suppose $W$ has Seifert fibred pieces $M_1, \ldots, M_{\ell}$ and that $\partial W \subset M_1$.  Then if $W(\alpha)$ is reducible, $\alpha$ must be the slope of the regular fibre in a Seifert fibration of $M_1$ unless $W$ is a Seifert fibred space with base orbifold $\mathcal{B}_1(\alpha_1,\dots, \alpha_{s_1})$ such that $\alpha_1=\cdots =\alpha_{s_1}=1$ or $s_1=0$, and the geometric intersection number satisfies $\Delta(\a, h)=1$ where $h$ is the slope of a regular fibre.
\end{lemma}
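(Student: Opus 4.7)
The plan is to distinguish cases based on whether $W$ is itself globally Seifert fibered.

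If $W$ is \emph{not} Seifert fibered, then its JSJ decomposition has at least two pieces, and Lemma \ref{graph1} applies directly: since $\alpha$ is not the slope of a regular fibre of $M_1$ and $\partial W \subset M_1$, the manifold $W(\alpha)$ is again a graph manifold, and hence irreducible by definition. This contradicts reducibility of $W(\alpha)$, so in this case $\alpha$ must be the slope of a regular fibre in some Seifert fibration of $M_1$, and we are done.

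If instead $W$ is itself Seifert fibered, then $\ell=1$ and $M_1 = W$. Since $\alpha$ is not the regular fibre slope, the analysis of \cite{Hei2} and \cite[Theorem 5.1]{DSh} already used in the proof of Lemma \ref{graph1} shows that $W(\alpha)$ is again Seifert fibered, with base orbifold obtained from $\mathcal{B}_1(\alpha_1,\dots,\alpha_{s_1})$ by capping off its boundary circle with a disk carrying a cone point of order $\Delta(\alpha,h)$. In particular, the exceptional fibres of $W(\alpha)$ are precisely the exceptional fibres of $W$ together with at most one additional fibre of multiplicity $\Delta(\alpha,h)$ coming from the filling.

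I would then invoke the classical classification of reducible closed orientable Seifert fibered $3$-manifolds --- namely $\mathbb{S}^1\times\mathbb{S}^2$, $\mathbb{S}^1\tilde{\times}\mathbb{S}^2$ and $\mathbb{P}^3\#\mathbb{P}^3$, as recalled in \cite[Proposition 4.1(3)]{BRW} --- together with Heil's explicit description of reducible Dehn fillings of Seifert fibered spaces from \cite{Hei2}. A case-by-case inspection of the admissible Seifert invariants for these three manifolds shows that reducibility of $W(\alpha)$ forces the capped base orbifold of $W(\alpha)$ to carry no nontrivial cone points at all, which in turn forces both $\Delta(\alpha,h)=1$ (so that the filling itself introduces no new cone point) and $\alpha_i = 1$ for all $i$ (or $s_1 = 0$), yielding precisely the exceptional conditions in the statement.

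The main obstacle is this second case: one must carefully rule out every way in which preexisting exceptional fibres of $W$ could combine with a filling multiplicity $\Delta(\alpha,h)\geq 2$ to produce one of the three reducible closed Seifert fibered spaces. I expect this reduces cleanly to a direct appeal to Heil's classification, which constrains the preexisting Seifert data and the filling slope simultaneously and matches them to exactly the exception described in the lemma.
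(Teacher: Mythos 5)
Your proposal is correct and follows essentially the same route as the paper: the non-Seifert-fibred case is handled by Lemma \ref{graph1} (graph manifolds are irreducible), and the Seifert fibred case by Heil's description of Dehn fillings together with the classification of reducible closed Seifert fibred spaces as $\mathbb{S}^1\times\mathbb{S}^2$ or $\mathbb{P}^3\#\mathbb{P}^3$, which pins down the exceptional condition $\alpha_1=\cdots=\alpha_{s_1}=1$ (or $s_1=0$) and $\Delta(\alpha,h)=1$. The paper leaves the final case-by-case inspection of Seifert invariants as a parenthetical assertion, exactly as you do.
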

\begin{proof}
Let $\alpha \in H_1(\partial W; \mathbb{Z})/\{ \pm 1\}$ be a slope. We have two cases:

{\bf Case 1:} Assume that $\alpha$ is not the slope of a regular fibre. Then the geometric intersection number satisfies $\Delta(\a, h)=d>0$, where $h$ is the slope of a regular fibre in $M_1$. Hence, if the base orbifold of $M_1$ is $\mathcal{B}_1(\alpha_1,\dots, \alpha_{s_1})$ then the base orbifold of $M_1(\a)$ is $\mathcal{B}'_1(\alpha_1,\dots, \alpha_{s_1}, d)$, where $\mathcal{B}'$ is obtained from $\mathcal{B}$ by filling a disk. We have two subcases:

{\bf Subcase 1:} Assume that $M_1=W$ that is $W$ is a Seifert fibred space. Then $W(\a)$ is a Seifert fibred space by \cite{Hei2} and \cite[Theorem 5.1]{DSh}. Hence $W(\a)$ is irreducible unless it is $\mathbb{S}^1\times \S^2$ or $\mathbb{P}^3\#\mathbb{P}^3$ \cite[Proposition 4.1 (3)]{BRW} (This is possible only in the case where $\alpha_1=\cdots =\alpha_{s_1}=1$ or $s_1=0$, and $d=1$.)

{\bf Subcase 2:} If $W$ is not a Seifert fibred space, then $W(\a)$ is a graph manifold by Lemma \ref{graph1}. Hence, $W(\a)$ is irreducible by definition of a graph manifold.

{\bf Case 2:} If $\a$ is the slope of a regular fibre then $W(\a)$ may be reducible \cite{Hei2} and \cite[Theorem 5.1]{DSh}.
\end{proof}

Given a rational homology sphere graph manifold $W$, note that the underlying graph must be a tree.  In this situation for a fixed JSJ torus $T$, note that $W \setminus T$ has two components.  We denote the closure of these components by $W_1$ and $W_2$, and $\phi: \partial W_1 \rightarrow \partial W_2$ the homeomorphism such that $W = W_1 \cup_{\phi} W_2$.  We use this fixed notation for the discussion and proof below.

Define a class $\mathcal{C}$ of rational homology sphere graph manifolds to be the smallest collection of $3$-manifolds satisfying:
\begin{enumerate}
\item All connected, irreducible rational homology sphere Seifert fibred manifolds belong to $\mathcal{C}$.
\item A rational homology sphere graph manifold $W$ is in $\mathcal{C}$ if and only if 
\begin{enumerate}
\item For every JSJ torus $T \subset W$, at least one of $W_1(\phi^{-1}_*(\lambda_{W_2}))$ and $W_2(\phi_*(\lambda_{W_1}))$ has infinite fundamental group, and 
\item For every JSJ torus $T \subset W$, every irreducible manifold of the form $W_i(\alpha)$ satisfying $|H_1(W_i(\alpha); \mathbb{Z})| < \infty$ lies in $\mathcal{C}$.
\end{enumerate}
\end{enumerate}

\begin{theorem}\label{graphCO}
If $W \in \mathcal{C}$ and $\pi_1(W)$ is infinite, then it is circularly orderable.
\end{theorem}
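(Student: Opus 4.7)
The plan is to induct on the number $n$ of Seifert fibred pieces in the JSJ decomposition of $W$. The base case $n=1$ is precisely Theorem \ref{SFCO}(1): an irreducible rational homology sphere Seifert fibred manifold with infinite fundamental group is circularly orderable.

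For the inductive step with $n \geq 2$: since $W$ is a rational homology sphere its JSJ graph is a tree, so I would pick a Seifert fibred leaf $M$ with adjacent JSJ torus $T$ and write $W = W_1 \cup_\phi W_2$ where $W_1 = M$ and $W_2 = \overline{W \setminus M}$. Then $W_1$ is Seifert fibred with incompressible boundary torus $T$ and $W_2$ has strictly fewer Seifert fibred pieces than $W$. A Mayer--Vietoris argument using $H_1(W;\Q)=0$ together with Lemma \ref{rank} yields $\mathrm{rk}(H_1(W_1;\Q))=1$, so the hypotheses on $W_1$ needed to apply Proposition \ref{rat_long_CO}(2) are already in hand. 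It therefore suffices to prove that $\pi_1(W_2(\phi_*(\lambda_{W_1})))$ is infinite and circularly orderable, at which point Proposition \ref{rat_long_CO}(2) delivers the conclusion.

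Infiniteness of $\pi_1(W_2(\phi_*(\lambda_{W_1})))$ is furnished by condition (2)(a) in the definition of $\mathcal{C}$, after possibly swapping the roles of $W_1$ and $W_2$ (in the swapped situation one invokes Lemma \ref{reg_fibre} to confirm that $W_2(\lambda_{W_2})$ is $\mathbb{P}^2$-irreducible outside of the exceptional regular fibre slope, which is treated separately). For circular orderability: when $W_2(\phi_*(\lambda_{W_1}))$ is irreducible, condition (2)(b) places it in $\mathcal{C}$, and since Dehn filling does not increase the number of Seifert fibred JSJ pieces the inductive hypothesis applies to it.

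The main obstacle is the reducible case. By Lemma \ref{reg_fibre}, $W_2(\phi_*(\lambda_{W_1}))$ is reducible only when $\phi_*(\lambda_{W_1})$ is the slope of a regular fibre in the Seifert piece of $W_2$ meeting $T$. In that situation I would write the prime decomposition $W_2(\phi_*(\lambda_{W_1})) = P_1 \# \cdots \# P_k$ and apply Proposition \ref{free product}(2), reducing the task to showing each $P_j$ has circularly orderable fundamental group. Each $P_j$ is either $\mathbb{S}^1 \times \mathbb{S}^2$ (with cyclic, hence circularly orderable, fundamental group) or an irreducible closed graph manifold that can be realized as an irreducible filling $W_2(\alpha)$ for a non-regular-fibre slope $\alpha$ and thus placed in $\mathcal{C}$ via condition (2)(b); circular orderability of each summand then follows from the inductive hypothesis, and the free product is circularly orderable by Proposition \ref{free product}(2), completing the induction.
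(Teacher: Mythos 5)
Your outline matches the paper's strategy almost step for step: induct on the size of the JSJ decomposition, cut off a Seifert fibred leaf $W_1$, fill $W_2$ along $\phi_*(\lambda_{W_1})$, and feed the result into Proposition \ref{rat_long_CO}(2), using Lemma \ref{reg_fibre} to isolate the reducible case and Theorem \ref{SFCO} for the base case. The first two thirds of your argument are essentially the paper's proof.

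The gap is in your treatment of the reducible case. When $\phi_*(\lambda_{W_1})$ is the regular fibre slope of the outermost piece $M_1 \subset W_2$, the prime summands of $W_2(\phi_*(\lambda_{W_1}))$ are lens spaces, possibly $\mathbb{S}^1 \times \mathbb{S}^2$, and the fillings $Y_j((\phi_j^{-1})_*(h))$ of the complementary pieces $Y_j$ of $W_2 \setminus M_1$ along the image of the fibre slope --- note these are fillings of the $Y_j$, not of $W_2$, so condition 2(b) must be invoked for the tori $T_j$ adjacent to the $Y_j$ rather than for $T$. More seriously, membership in $\mathcal{C}$ does not imply circular orderability: the inductive hypothesis (like the theorem itself) only applies to manifolds in $\mathcal{C}$ with \emph{infinite} fundamental group, and nothing in the definition of $\mathcal{C}$ prevents some $Y_{j_0}((\phi_{j_0}^{-1})_*(h))$ from having finite non-cyclic fundamental group (condition 2(a) constrains only rational longitude fillings, and $(\phi_{j_0}^{-1})_*(h)$ need not be one; think of $Y_{j_0}$ a trefoil exterior with a finite filling). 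In that situation $\pi_1(W_2(\phi_*(\lambda_{W_1})))$ is a free product with a non-circularly-orderable factor, hence not circularly orderable by Proposition \ref{free product}(2), and your entire approach of filling along $\phi_*(\lambda_{W_1})$ collapses. The paper escapes by abandoning the torus $T$ altogether in this subcase: it re-decomposes $W = Y_{j_0} \cup_{\phi_{j_0}} W''$ along the torus adjacent to $Y_{j_0}$, notes that $W''((\phi_{j_0})_*(\lambda_{Y_{j_0}}))$ is irreducible by Lemma \ref{graph1}, lies in $\mathcal{C}$, and has infinite fundamental group since it still has at least two Seifert pieces, so it is circularly orderable by induction, and then applies Proposition \ref{rat_long_CO}(2) with $Y_{j_0}$ in the role of $M_1$. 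You need some version of this re-choice of decomposition (or another device) to close the argument.
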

\begin{proof}

For every graph manifold $W$ let $n_W$ denote the minimal number of tori required to cut $W$ into Seifert fibred pieces (i.e., the number of tori in its JSJ decomposition).  If $n_W = 0$ and $\pi_1(W)$ is infinite, then it is circularly orderable by Theorem \ref{SFCO}.  For induction, assume that $k \geq 0$ and that $\pi_1(W)$ is circularly orderable for all $W \in \mathcal{C}$ with $n_W \leq k$, and consider the case of $n_W = k+1$.

For a manifold $W \in \mathcal{C}$ with $n_W = k+1$ and infinite fundamental group, choose a JSJ torus $T$ such that $W \setminus T$ results in two pieces $W_1$ and $W_2$, such that $W_1$ is Seifert fibred.  

First suppose that $W_2(\phi_*(\lambda_{W_1}))$ has infinite fundamental group and let $M_1, \dots, M_{\ell}$ denote the Seifert fibred pieces of the JSJ decomposition of $W_2$, assume that $\partial W_2 \subset M_1$.

Next suppose that $W_2(\phi_*(\lambda_{W_1}))$ is irreducible.  If $H_1(W_2(\phi_*(\lambda_{W_1})); \mathbb{Z})$ is finite, then we conclude $W_2(\phi_*(\lambda_{W_1})) \in \mathcal{C}$ by property 2(b).   Then $\phi_*(\lambda_{W_1})$ is not the slope of a regular fibre in the outermost piece of $M_1$ and the JSJ components of $W_2(\phi_*(\lambda_{W_1}))$ are precisely the Seifert fibred manifolds $M_1(\phi_*(\lambda_{W_1})), M_2, \dots, M_{\ell}$ by Lemma \ref{graph1}.  Thus $W_2(\phi_*(\lambda_{W_1}))$ is a graph manifold in $\mathcal{C}$ with $n_{W_2(\phi_*(\lambda_{W_1}))} < n_W$, so $\pi_1(W_2(\phi_*(\lambda_{W_1})))$ is circularly orderable by induction.  On the other hand if $H_1(W_2(\phi_*(\lambda_{W_1})); \mathbb{Z})$ is infinite then $\pi_1(W_2(\phi_*(\lambda_{W_1})))$ is left-orderable, hence circularly orderable, by \cite[Theorem 3.2]{BRW}.  In either case it follows that $\pi_1(W)$ is circularly orderable by Proposition \ref{rat_long_CO}(2).

Now suppose that $W_2(\phi_*(\lambda_{W_1}))$ is reducible, in which case $\phi_*(\lambda_{W_1})$ must be the slope of a regular fibre in $\partial M_1$ or  $W_2(\phi_*(\lambda_{W_1}))$ is $\mathbb{S}^1\times \S^2$ or $\mathbb{P}^3\#\mathbb{P}^3$ by Lemma \ref{reg_fibre}.  If $W_2(\phi_*(\lambda_{W_1}))$ is $\mathbb{S}^1\times \S^2$ or $\mathbb{P}^3\#\mathbb{P}^3$, then as each has circularly orderable fundamental group, $\pi_1(W)$ is circularly orderable by Proposition \ref{rat_long_CO}(2).

Next, in the case that $\phi_*(\lambda_{W_1})$ is the slope of a regular fibre in $\partial M_1$, recall that $H_1(W; \mathbb{Z})$ is finite and so the surface underlying the base orbifold of $M_1$ has genus zero.  Further suppose that $M_1$ has $r$ exceptional fibres and boundary tori $T, T_1, \dots, T_m$, and that $W_2 \setminus M_1$ has components $Y_1, \dots, Y_m$ where each $Y_j$ is a graph manifold with torus boundary.  Let $\phi_j : \partial Y_j \rightarrow T_j \subset M_1$ for $j = 1, \dots, m$ denote the gluing maps that recover $W_2$ from the pieces $M_1, Y_1, \dots, Y_m$.  In this case, by \cite[Theorem 5.1]{DSh} and \cite{Hei2}, filling $M_1$ along $\phi_*(\lambda_{W_1})$ yields
\[ M_1(\phi_*(\lambda_{W_1})) \cong L_1 \# \dots \# L_r \# (\mathbb{S}^1 \times \D^2) \# \dots \# (\mathbb{S}^1 \times \D^2) 
\]
or 
\[ M_1(\phi_*(\lambda_{W_1})) \cong L_1 \# \dots \# L_r \# (\mathbb{S}^1 \times \D^2) \# \dots \# (\mathbb{S}^1 \times \D^2) \#  (\mathbb{S}^1 \times \S^2) 
\]
depending on whether or not the underlying manifold of the base orbifold is a punctured $\S^2$ or $\P^2$.  Here, $L_1, \dots, L_r$ are lens spaces, there are $m$ copies of $\mathbb{S}^1 \times \D^2$ each arising from a torus component of of $\partial M_1 \setminus T$, and each $\partial \D^2$ is path homotopic to (the image of) a regular fibre in the Dehn filled manifold $M_1(\phi_*(\lambda_{W_1}))$.

Therefore if we denote the slope of a regular fibre on $\partial M_1$ by $h$, it follows that
\[ W_2(\phi_*(\lambda_{W_1})) \cong  L_1 \# \dots \# L_r \# Y_1((\phi_1^{-1})_*(h)) \# \dots \# Y_m((\phi_m^{-1})_*(h))
\]
or 
\[ W_2(\phi_*(\lambda_{W_1})) \cong  L_1 \# \dots \# L_r \# Y_1((\phi_1^{-1})_*(h)) \# \dots \# Y_m((\phi_m^{-1})_*(h))\# (\mathbb{S}^1 \times \S^2),
\]
again depending on whether or not the base orbifold is orientable.

In either case, we may proceed as follows.  Suppose that every manifold $Y_j((\phi_j^{-1})_*(h))$ for $j =1 , \dots , m$ has infinite fundamental group.  If $Y_j$ is homeomorphic to a Seifert fibred manifold, it follows that its fundamental group of $Y_j((\phi_j^{-1})_*(h))$ is circularly orderable by Theorem \ref{SFCO}.  On the other hand if $Y_j$ is not homeomorphic to a Seifert fibred manifold, then as $(\phi_j^{-1})_*(h)$ is not the slope of a regular fibre in the outermost Seifert fibred piece of $Y_j$, we know that $Y_j((\phi_j^{-1})_*(h))$ is irreducible by Lemma \ref{graph1}.   Thus either $|H_1(Y_j((\phi_j^{-1})_*(h)); \mathbb{Z})| = \infty$ or $|H_1(Y_j((\phi_j^{-1})_*(h)); \mathbb{Z})| < \infty$ and $Y_j((\phi_j^{-1})_*(h)) \in \mathcal{C}$ by property 2(b).  In the former case, the fundamental group of $Y_j((\phi_j^{-1})_*(h))$ is circularly orderable since it is in fact left-orderable by Theorem \cite[Theorem 3.2]{BRW}.  In the latter case we may argue that $n_{Y_j((\phi_j^{-1})_*(h))} < n_W$ so the fundamental group of $Y_j((\phi_j^{-1})_*(h))$ is circularly orderable by induction.

It follows that $\pi_1( W_2(\phi_*(\lambda_{W_1})))$ is a free product of circularly orderable groups, and thus is circularly orderable.  Note it is also infinite by assumption.  Last, note that $\lambda_{W_1}$ is not the slope of a regular fibre in $\partial W_1$, since it is glued via $\phi_*$ to the slope of a regular fibre in $\partial M_1$, and thus $W_1(\lambda_{W_1})$ is irreducible by \cite{Hei2}. Therefore $\pi_1(W)$ is circularly orderable by Proposition \ref{rat_long_CO}(2). 

Now suppose that there exists $j_0$ such that the fundamental group of $Y_{j_0}((\phi_{j_0}^{-1})_*(h))$ is finite, in which case $Y_{j_0}$ is Seifert fibred and $(\phi_{j_0}^{-1})_*(h)$ is not equal to the rational longitude $\lambda_{Y_{j_0}}$ of $Y_{j_0}$.  One of the components of $W \setminus T_{j_0}$ is $Y_{j_0}$, we will call the other component $W''$, so that $W = Y_{j_0} \cup_{\phi_{j_0}} W''$.  Note that $W''((\phi_{j_0})_*(\lambda_{Y_{j_0}}))$ is irreducible by Lemma \ref{graph1}, since $(\phi_{j_0})_*(\lambda_{Y_{j_0}})$ is not the slope a regular fibre in the outermost piece of $W''$.  Thus either the homology of $W''((\phi_{j_0})_*(\lambda_{Y_{j_0}}))$ is infinite and so it has left-orderable fundamental group, or the homology is finite and so $W''((\phi_{j_0})_*(\lambda_{Y_{j_0}})) \in \mathcal{C}$.  In the latter case $W''((\phi_{j_0})_*(\lambda_{Y_{j_0}}))$ has infinite fundamental group since it is an irreducible graph manifold whose JSJ decomposition consists of two or more Seifert fibred pieces, and its fundamental group is circularly orderable by induction. Thus we may apply Proposition \ref{rat_long_CO}(2) with $Y_{j_0}$ in place of $M_1$ and $W''$ in place of $M_2$ in order to conclude that $\pi_1(W)$ is circularly orderable. 

Last, suppose $W_2(\phi_*(\lambda_{W_1}))$ has finite fundamental group (and thus $W_2$ is Seifert fibred) so that $W_1(\phi^{-1}_*(\lambda_{W_2}))$ is infinite by property (2)(a).  Then $\pi_1(W_1(\phi^{-1}_*(\lambda_{W_2})))$ is circularly orderable by Theorem \ref{SFCO}. Thus the result follows from Proposition \ref{rat_long_CO}(2).
\end{proof}

%\begin{remark}
%The class $\mathcal{C}$ of graph manifolds is quite large.  Indeed, condition 2(a) only fails when both the filled manifolds  $W_1(\phi^{-1}_*(\lambda_{W_2}))$ and $W_2(\phi _*\lambda_{W_1}))$ are Seifert fibred, meaning the graph manifold $W$ has only two Seifert fibred pieces.  Consequently, condition 2(b) only fails when $W$ is a manifold that is constructed via gluing together Seifert fibred pieces admitting finite fillings. 
%\end{remark}

We can more precisely codify the manifolds covered by Theorem  \ref{graphCO} as follows. The possible base orbifolds of a Seifert fibred manifold admitting a single incompressible torus boundary component and a finite filling are:
$$\mathcal{A}:=\{ \mathbb{D}^2(p,q),\; \mathbb{D}^2(2,2,r),\\
 \mathbb{D}^2(2,3,3), \; \mathbb{D}^2(2,3,4),\; \mathbb{D}^2(2,3,5),\; {\rm \; with }\; r\geq 1, p\geq 2, q\geq 2\}.$$
% Let $\mathcal{E}$ be the set of graph manifolds whose JSJ decomposition $($with minimal number of tori$)$ has at least one Seifert piece with base orbifold in $\mathcal{A}$.    
 
 \begin{proof}[Proof of Theorem \ref{special graph case}]
 First, if $W$  is not a rational homology sphere then the first Betti number $b_1(W)$ is positive, so $\pi_1(W)$ is left-orderable by \cite[Theorem 3.2]{BRW}.  Second to this, if $W$ is a graph manifold satisfying the assumptions of Theorem \ref{special graph case}, and if $W$ is Seifert fibred, then Theorem \ref{SFCO} finishes the proof. 

From here, we complete the proof by showing that if $W$ is a rational homology sphere graph manifold satisfying the hypotheses of Theorem \ref{special graph case}, then $W \in \mathcal{C}$ so Theorem \ref{graphCO} applies.

To do this, we induct on the number $n_W$ of tori in the JSJ decomposition of $W$, noting that if $W$ satisfies the hypotheses of Theorem \ref{special graph case} and $n_W = 0$ then $W \in \mathcal{C}$ by definition. Next, suppose that every rational homology sphere graph manifold $W$ satisfying the hypotheses of Theorem \ref{special graph case} with $n_W <k$ lies in $\mathcal{C}$, and consider $W$ with $n_W = k$.  

Suppose $W$ has Seifert fibred pieces $M_1, \ldots, M_{\ell}$. Choose an arbitrary JSJ torus $T \subset W$ and cut along $T$ to arrive at $W = W_1 \cup_{\phi} W_2$.

Considering $W_1(\phi^{-1}_*(\lambda_{W_2}))$ and $W_2(\phi_*(\lambda_{W_1}))$, if either admits a JSJ decomposition with one or more JSJ tori, then its fundamental group is infinite.  On the other hand, if either is Seifert fibred then again the fundamental group is infinite since the hypotheses of Theorem \ref{special graph case} imply the base orbifold of $M_i$ is not in $\mathcal{A}$ for $i = 1, \dots, \ell$.  In any event, $W$ satisfies property 2(a) in the definition of $\mathcal{C}$.

Next, suppose that $W_1(\alpha)$ is irreducible for some $\alpha \in H_1(W_1; \mathbb{Z})/\{ \pm 1\}$ and that $H_1(W(\alpha); \mathbb{Z})$ is finite.  If $W_1$ is Seifert fibred then $W_1(\alpha)$ is Seifert fibred and so lies in $\mathcal{C}$ by definition. On the other hand if $W_1$ is not Seifert fibred then by Lemma \ref{reg_fibre}, $\alpha$ must not be the slope of the regular fibre in the outermost piece of $W_1$ and so $W_1(\alpha)$ is a graph manifold by Lemma \ref{graph1}.  Moreover $n_{W_1(\alpha)} <k$ and so  $W_1(\alpha) \in \mathcal{C}$ by our induction assumption.    As the same arguments hold for $W_2$, we conclude that $W$ satisfies property 2(b) in the definition of $\mathcal{C}$, and thus $W \in \mathcal{C}$.
 \end{proof}
  
  In fact, suppose $W$ is an arbitrary rational homology sphere graph manifold admitting a JSJ decomposition $W = M_1 \cup_{\phi}M_2$ such that the base orbifold of $M_i$ lies in $\mathcal{A}$.  If $\pi_1(W)$ is circularly orderable for every such $W$, then Theorem \ref{graphCO} and its proof can be used, \textit{mutatis mutandis}, to show that the fundamental group of \emph{every} rational homology sphere graph manifold is circularly orderable whenever it is infinite.  We thus make explicit exactly which graph manifolds having two pieces in their JSJ decomposition are covered by Theorem \ref{graphCO}.
  
Let $\mathcal{E}$ be the set of graph manifolds whose JSJ decomposition has at least one Seifert piece with base orbifold in $\mathcal{A}$.    Further, set $\mathcal{F}=\{\mathbb{D}^2(2,2), \mathbb{D}^2(2,3), \mathbb{D}^2(3,3), \mathbb{D}^2(3,4), \mathbb{D}^2(3,5) \}\subset \mathcal{A}$.  
  
\begin{corollary}
\label{2piecescase}
Let $W$ be a rational homology sphere graph manifold whose JSJ decomposition has only two Seifert pieces $M_1$ and $M_2$ with base orbifolds $\mathcal{B}_1(p_1^1,\dots, p_{s_1}^1)$ and $\mathcal{B}_2(p_1^2,\dots, p_{s_2}^2)$ respectively.  Let $\phi : \partial M_1 \rightarrow \partial M_2$ denote the gluing map that recovers $W$ from the pieces $M_1$ and $M_2$, and let $\lambda_i$ and $h_i$ denote the rational longitude and the slope of a regular fibre on $\partial M_i$ for each of $i = 1, 2$. Suppose that $W$ satisfies:
\begin{enumerate}
\item $W\notin \mathcal{E}$; or
    %\item $M_1\in \mathcal{E}$ and $M_2\notin \mathcal{E}$; or
    \item the base orbifold of $M_1$ lies in $\mathcal{A}$ and the base orbifold of $M_2$ is not in $\mathcal{A}$; or
    \item the base orbifolds of both $M_1$ and $M_2$ lie in $\mathcal{A}$, and 
    \begin{enumerate}
        \item if $\mathcal{B}_1(p_1^1,\dots, p_{s_1}^1) \notin \mathcal{F}$ and $\mathcal{B}_2(p_1^2,\dots, p_{s_2}^2)\notin \mathcal{F}$, then either $\Delta(\phi_*(\lambda_1), h_2)> 1$ or $\Delta(\phi^{-1}_*(\lambda_2), h_1)>1$;
     \item if $\mathcal{B}_1(p_1^1,\dots, p_{s_1}^1)\notin\mathcal{F}$ and $\mathcal{B}_2(p_1^2,\dots, p_{s_2}^2)\in \mathcal{F}$, then $\Delta(\phi^{-1}_*(\lambda_2), h_1)> 1$, or $\mathcal{B}_2(p_1^2,\dots, p_{s_2}^2)=\mathbb{D}^2(2,3)$ and $\Delta(\phi_*(\lambda_1), h_2)> 5$, or $\mathcal{B}_2(p_1^2,\dots, p_{s_2}^2)=\mathbb{D}^2(3,3)$ and $\Delta(\phi_*(\lambda_1), h_2)> 2$, or $\mathcal{B}_2(p_1^2,\dots, p_{s_2}^2)=\mathbb{D}^2(3,4)$ and $\Delta(\phi_*(\lambda_1), h_2)> 2$, or $\mathcal{B}_2(p_1^2,\dots, p_{s_2}^2)=\mathbb{D}^2(3,5)$ and $\Delta(\phi_*(\lambda_1), h_2)> 2$;
    %\item if $\mathcal{B}_2(p_1^2,\dots, p_{s_2}^2)\notin\mathcal{F}$ and $\mathcal{B}_1(p_1^1,\dots, p_{s_1}^1)\in \mathcal{F}$, then $\Delta(\phi_*(\lambda_1), h_2)> 1$, or \\ $\mathcal{B}_1(p_1^1,\dots, p_{s_1}^1)=\mathbb{D}^2(2,3)$ and $\Delta(\phi^{-1}_*(\lambda_2), h_1)> 5$, 
  %   or $\mathcal{B}_1(p_1^1,\dots, p_{s_1}^1)=\mathbb{D}^2(3,3)$ and $\Delta(\phi^{-1}_*(\lambda_2), h_1)> 2$, or $\mathcal{B}_2(p_1^2,\dots, p_{s_2}^2)=\mathbb{D}^2(3,4)$ and $\Delta(\phi^{-1}_*(\lambda_2), h_1)> 2$, or $\mathcal{B}_2(p_1^2,\dots, p_{s_2}^2)=\mathbb{D}^2(3,5)$ and $\Delta(\phi^{-1}_*(\lambda_2), h_1)> 2$.
    \end{enumerate}
\end{enumerate}
then $\pi_1(W)$ is circularly orderable.
\end{corollary}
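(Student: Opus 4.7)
The plan is to verify that under any of the listed hypotheses (1), (2), or (3), the manifold $W$ belongs to the class $\mathcal{C}$ defined immediately before Theorem \ref{graphCO}; then $\pi_1(W)$ is circularly orderable by that theorem, using also that $\pi_1(W)$ is automatically infinite because $W$ admits a nontrivial JSJ decomposition. Since the JSJ decomposition of $W$ consists of a single torus $T$ with $W_1=M_1$ and $W_2=M_2$, only this one torus must be inspected when checking the two defining properties of $\mathcal{C}$.

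Property (2)(b), that every irreducible filling $M_i(\alpha)$ lies in $\mathcal{C}$, is straightforward: as each $M_i$ is Seifert fibred, a filling along any slope other than the regular fibre is Seifert fibred by Heil, while a filling along the regular fibre yields a reducible connect sum of lens spaces (possibly with an $\mathbb{S}^1\times\mathbb{S}^2$ or $\mathbb{P}^3\#\mathbb{P}^3$ summand) by \cite{Hei2} and \cite[Theorem 5.1]{DSh} and so is excluded. Every irreducible filling is therefore an irreducible Seifert-fibred rational homology sphere, and lies in $\mathcal{C}$ via property (1) of the definition.

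The substantive step is property (2)(a): at least one of $M_1(\phi^{-1}_*(\lambda_2))$ or $M_2(\phi_*(\lambda_1))$ must have infinite fundamental group. In Case (1), neither base orbifold lies in $\mathcal{A}$, so by the definition of $\mathcal{A}$ neither $M_i$ admits any finite Dehn filling and both candidates are infinite. In Case (2), $\mathcal{B}_2\notin\mathcal{A}$ similarly forces $M_2(\phi_*(\lambda_1))$ to be infinite. For Case (3), I would analyse each entry of $\mathcal{A}$ in turn: a Seifert fibration over $\mathbb{D}^2(\alpha_1,\ldots,\alpha_s)$ filled along a slope with $\Delta(\alpha,h)=d$ produces a closed Seifert fibred manifold whose base orbifold is $\mathbb{S}^2(\alpha_1,\ldots,\alpha_s,d)$ (when $d\geq 2$) or $\mathbb{S}^2(\alpha_1,\ldots,\alpha_s)$ (when $d=1$), and the orbifold Euler characteristic together with the Seifert Euler number determines when the resulting fundamental group is finite. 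For $\mathcal{B}\in\mathcal{A}\setminus\mathcal{F}$ this confines the finite cases to $d=1$, while for $\mathbb{D}^2(2,2)$ every non-fibre filling is finite and for $\mathbb{D}^2(2,3)$ the filling is finite exactly when $d\leq 5$. Hypotheses (3)(a), (3)(b), (3)(c) are engineered so that in each instance at least one of the two $\lambda$-fillings lands outside its finite-$\pi_1$ region, which yields (2)(a).

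The main obstacle is this last case analysis: one must translate, for each $\mathcal{B}\in\mathcal{A}$, finiteness of the filled Seifert-fibred fundamental group into the concrete intersection-number conditions of the corollary, which requires both the orbifold Euler characteristic computation to bound the spherical cases and a check that the Seifert Euler number does not vanish under the rational-homology-sphere hypothesis on $W$ (since Euler number zero over a spherical base gives Euclidean geometry and infinite $\pi_1$). Once property (2)(a) is verified in each sub-case, $W\in\mathcal{C}$ and Theorem \ref{graphCO} completes the proof.
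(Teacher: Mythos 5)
Your strategy is exactly the paper's: the entire published proof of Corollary \ref{2piecescase} is the single sentence that any such $W$ lies in $\mathcal{C}$, whence Theorem \ref{graphCO} applies. Your expansion of the membership check --- one JSJ torus to inspect, property 2(b) via Heil's theorem that non-fibre fillings of Seifert pieces are again Seifert fibred, property 2(a) via a case analysis on $\mathcal{A}$ and $\mathcal{F}$ --- is a faithful reconstruction of what the authors leave implicit.

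However, the quantitative claim on which your Case (3) analysis rests, namely that ``for $\mathcal{B}\in\mathcal{A}\setminus\mathcal{F}$ this confines the finite cases to $d=1$,'' is not correct. It does hold for the three-cone-point members $\mathbb{D}^2(2,2,r)$, $\mathbb{D}^2(2,3,3)$, $\mathbb{D}^2(2,3,4)$, $\mathbb{D}^2(2,3,5)$, since adjoining a fourth cone point never produces a spherical orbifold. But it fails for the two-cone-point members $\mathbb{D}^2(p,q)$ with $\{p,q\}\neq\{2,2\},\{2,3\}$: filling at fibre-distance $d=2$ over $\mathbb{D}^2(2,q)$ with $q\geq 4$ yields base orbifold $\mathbb{S}^2(2,q,2)$, which is spherical of dihedral type, and likewise $\mathbb{D}^2(3,3)$, $\mathbb{D}^2(3,4)$, $\mathbb{D}^2(3,5)$ at $d=2$ and $\mathbb{D}^2(2,4)$, $\mathbb{D}^2(2,5)$ at $d=3$ realize the tetrahedral, octahedral and icosahedral triples. (Concretely, the $(2,5)$-torus-knot exterior admits finite fillings at distance $2$ and $3$ from the fibre.) Since the relevant $\lambda$-fillings here are rational homology spheres, a spherical base orbifold really does force finite fundamental group. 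Consequently, in sub-case 3(a) the hypothesis that at least one $\Delta$ is different from $1$ does not by itself guarantee that the corresponding filling is infinite, and property 2(a) of $\mathcal{C}$ is not verified for these orbifolds. To be fair, the corollary's hypotheses appear to be calibrated to the same premise, so you have arguably inherited an imprecision already present in the paper; but as written your argument does not close Case (3), and one would need either to strengthen the hypotheses (excluding every fibre-distance realizing a spherical triple, not just distance $1$) or to supply a separate argument for the dihedral- and polyhedral-type fillings.
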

\begin{proof}
We prove only case (3)(b), as the other statements claimed are all consequence of Theorem \ref{graphCO}, since any such manifold lies in $\mathcal{C}$.

Assume that  $\mathcal{B}_1(p_1^1,\dots, p_{s_1}^1)\notin\mathcal{F}$, $\mathcal{B}_2(p_1^2,\dots, p_{s_2}^2)\in \mathcal{F}$ and $\Delta(\phi^{-1}_*(\lambda_2), h_1)> 1$. Let $\varphi=\phi^{-1}$. Since $\mathcal{B}_1(p_1^1,\dots, p_{s_1}^1)\notin\mathcal{F}$ and $\Delta(\phi^{-1}_*(\lambda_2), h_1)>1$, $\pi_1(M_1(\varphi(\lambda_2)))$ is infinite and circularly orderable. Since $\mathcal{B}_2(p_1^2,\dots, p_{s_2}^2)\in \mathcal{F}$, $M_2$ is Seifert fibred with incompressible boundary. Therefore, $\pi_1(M)$ is circularly orderable by Proposition \ref{rat_long_CO}.

Next assume that $\mathcal{B}_2(p_1^2,\dots, p_{s_2}^2)=\mathbb{D}^2(2,3)$ (resp. $\mathbb{D}^2(3,3),\mathbb{D}^2(3,4),  \mathbb{D}^2(3,5)$ ) and also that $\Delta(\phi_*(\lambda_1), h_2)> 5$ (resp. $\Delta(\phi_*(\lambda_1), h_2)>2$). Then, the base orbifold of $M_2(\phi(\lambda_1))$ is $\mathbb{S}^2(2, 3, a)$ (resp. $\mathbb{S}^2(3, 3, a)$, $\mathbb{S}^2(3, 4, a)$, $\mathbb{S}^2(3, 5, a)$) where $a \geq 6$ (resp. $a \geq 3$). Hence, $\pi_1(M_2(\phi(\lambda_1)))$ is infinite and circularly orderable. Since $\mathcal{B}_1(p_1^1,\dots, p_{s_1}^1)\in \mathcal{A}$, $M_1$ is Seifert fibred with incompressible boundary. Therefore, $\pi_1(M)$ is circularly orderable by Proposition \ref{rat_long_CO}.
%
%Assume that $\mathcal{B}_2(p_1^2,\dots, p_{s_2}^2)=\mathbb{D}^2(3,3)$ and $\Delta(\phi_*(\lambda_1), h_2)> 2$. Then, the base orbifold of $M_2(\phi(\lambda_1))$ is $\mathbb{S}^2(3, 3, a)$ where $a \geq 3$. Hence, $\pi_1(M_2(\phi(\lambda_1)))$ is infinite and circularly orderable. Since $\mathcal{B}_1(p_1^1,\dots, p_{s_1}^1)\in \mathcal{E}$, $M_1$ is Seifert fibred with incompressible boundary. Therefore, $\pi_1(M)$ is circularly orderable by Proposition \ref{rat_long_CO}.  
%
% Assume that $\mathcal{B}_2(p_1^2,\dots, p_{s_2}^2)=\mathbb{D}^2(3,4)$ and $\Delta(\phi_*(\lambda_1), h_2)> 2$. Then, the base orbifold of $M_2(\phi(\lambda_1))$ is $\mathbb{S}^2(3, 4, a)$ where $a \geq 3$. Hence, $\pi_1(M_2(\phi(\lambda_1)))$ is infinite and circularly orderable. Since $\mathcal{B}_1(p_1^1,\dots, p_{s_1}^1)\in \mathcal{E}$, $M_1$ is Seifert fibred with incompressible boundary. Therefore, $\pi_1(M)$ is circularly orderable by Proposition \ref{rat_long_CO}.
% 
% Assume that $\mathcal{B}_2(p_1^2,\dots, p_{s_2}^2)=\mathbb{D}^2(3,5)$ and $\Delta(\phi_*(\lambda_1), h_2)> 2$. Then, the base orbifold of $M_2(\phi(\lambda_1))$ is $\mathbb{S}^2(3, 5, a)$ where $a \geq 3$. Hence, $\pi_1(M_2(\phi(\lambda_1)))$ is infinite and circularly orderable. Since $\mathcal{B}_1(p_1^1,\dots, p_{s_1}^1)\in \mathcal{E}$, $M_1$ is Seifert fibred with incompressible boundary. Therefore, $\pi_1(M)$ is circularly orderable by Proposition \ref{rat_long_CO}.
\end{proof}

 %Consequently, if $W$ is a graph manifold rational homology sphere none of whose pieces has base orbifold which lie in $\mathcal{A}$, then $W \in \mathcal{C}$.

\subsection{Graph manifolds with two pieces}

Our goal in this section is to show that we can circularly order many of the fundamental groups of graph manifolds having two pieces in their JSJ decomposition, that are not covered by Corollary \ref{2piecescase}.

\begin{lemma}\label{cyclic}
 Let $W$ be a 3-manifold obtained by gluing two knot exteriors in some integer homology 3-spheres, on their torus boundary by some orientation reversing homeomorphism. Then $H_1(W, \Z)$ is cyclic.
\end{lemma}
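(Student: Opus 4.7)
The plan is a direct Mayer--Vietoris computation. Write $W = E_1 \cup_\phi E_2$, where $E_i = \Sigma_i \setminus \nu(K_i)$ is the exterior of a knot $K_i$ in an integer homology sphere $\Sigma_i$ and $\phi\colon \partial E_1 \to \partial E_2$ is the gluing homeomorphism. Denote by $T$ the common torus obtained by identification, and let $(\mu_i, \lambda_i)$ be the standard meridian--longitude basis of $H_1(\partial E_i;\mathbb{Z})$, so that $\lambda_i$ is null-homologous in $E_i$.

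First I would record the well-known consequence of Alexander duality (or a direct Mayer--Vietoris applied to $\Sigma_i = E_i \cup \nu(K_i)$ using that $\Sigma_i$ is an integer homology sphere): $H_1(E_i;\mathbb{Z}) \cong \mathbb{Z}$, generated by the class of the meridian $\mu_i$, while $\lambda_i$ maps to $0$ in $H_1(E_i;\mathbb{Z})$.

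Next, apply the reduced Mayer--Vietoris sequence to $W = E_1 \cup_\phi E_2$. Since $T$, $E_1$, $E_2$ and $W$ are all connected, the map $H_0(T;\mathbb{Z}) \to H_0(E_1;\mathbb{Z}) \oplus H_0(E_2;\mathbb{Z})$ is injective, so the sequence reduces to
\[
H_1(T;\mathbb{Z}) \xrightarrow{\Phi} H_1(E_1;\mathbb{Z}) \oplus H_1(E_2;\mathbb{Z}) \longrightarrow H_1(W;\mathbb{Z}) \longrightarrow 0,
\]
where $\Phi(\alpha) = (i_{1*}\alpha, -i_{2*}\phi_*\alpha)$. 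Write $\phi_*(\mu_1) = a\mu_2 + b\lambda_2$ and $\phi_*(\lambda_1) = c\mu_2 + d\lambda_2$ for some integers with $ad - bc = \pm 1$. Identifying $H_1(E_i;\mathbb{Z}) \cong \mathbb{Z}\langle \mu_i\rangle$, the image of $\Phi$ in $\mathbb{Z}^2$ is generated by $(1, -a)$ (from $\mu_1$) and $(0, -c)$ (from $\lambda_1$), since $i_{1*}\lambda_1 = 0$.

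Finally, quotienting $\mathbb{Z}^2$ by the first generator eliminates the first coordinate, leaving $\mathbb{Z}/c\mathbb{Z}$ (interpreted as $\mathbb{Z}$ when $c = 0$). Hence $H_1(W;\mathbb{Z})$ is cyclic, as required. I do not expect any real obstacles here: the argument is routine once the Mayer--Vietoris sequence is written down, and the orientation-reversing hypothesis plays no role in the cyclicity conclusion (it only ensures $W$ is a well-defined oriented manifold). The one point worth stating carefully is the identification $i_{i*}(\lambda_i) = 0$, which is what collapses a potentially rank-$2$ image down to something generated by two vectors with a zero in the first coordinate.
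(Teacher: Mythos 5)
Your proposal is correct and follows essentially the same route as the paper's own proof: a Mayer--Vietoris computation for $W = E_1 \cup_\phi E_2$, using that each $H_1(E_i;\Z)$ is infinite cyclic generated by the meridian with the longitude null-homologous, so that the image of $H_1(T;\Z)$ is generated by $(1,-a)$ and $(0,-c)$ and the cokernel is $\Z/c\Z$. The paper carries out the identical calculation (presenting the image as the column span of a $2\times 2$ matrix and row-reducing), so there is nothing to add.
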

\begin{proof}
Let $K_1$ and $K_2$ be two knots in some integer homology 3-spheres $W_1$ and $W_2$ respectively. Let $M_1$ and $M_2$ be the knot exteriors of $K_1$ and $K_2$ respectively. Let $W$ be the $3$-manifold obtained by identifying $\partial M_1$ and $\partial M_2$ by some orientation reversing homeomorphism $\varphi$. Let $(\mu_1, \lambda_1)$ and $(\mu_2, \lambda_2)$ be the meridian-longitude slope pairs of $\partial M_1$ and $\partial M_2$ respectively. Let $\varphi_*(\mu_1)=a\mu_2 + b\lambda_2$ and $\varphi_*(\lambda_1)=c\mu_2 + d\lambda_2$.

Let $T=\partial M_1\cong \partial M_2$. We have the Mayer-Vietoris sequence:

$\cdots \longrightarrow H_1(T; \mathbb{Z})\stackrel{\phi_1}{\longrightarrow} H_1(M_1; \mathbb{Z})\oplus H_1(M_2; \Z)\stackrel{\psi_1}{\longrightarrow} H_1(W; \mathbb{Z})\stackrel{\partial_1}{\longrightarrow} H_0(T; \mathbb{Z})\stackrel{\phi_0}{\longrightarrow} H_0(M_1; \mathbb{Z})\oplus H_0(M_2; \Z)\stackrel{\psi_0}{\longrightarrow}
 H_0(W; \mathbb{Z})\longrightarrow 0.$
 
We have that $\lambda_1=0$ in $H_1(M_1; \Z)$, and $H_1(M_1; \Z)$ is generated by $\mu_1.$ We have also that $\lambda_2=0$ in $H_1(M_2; \Z)$, and $H_1(M_2; \Z)$ is generated by $\mu_2.$ We consider $\{\mu_1, \lambda_1\}$ to be the generators of $H_1(T; \Z)$. Hence, $\phi_1(\mu_1)=i_T(\mu_1)\oplus -i_T(\mu_1)=(\mu_1, -a\mu_2)$ and $\phi_1(\lambda_1)=i_T(\lambda_1)\oplus -i_T(\lambda_1)=(0, -c\mu_2)$. By exactness we have that $im(\phi_1)=ker(\psi_1)$, $im(\psi_1)=ker(\partial_1)$, and $im(\partial_1)=ker(\phi_0)$. Since $T$, $M_1$ and $M_2$ are connected, hence path connected, $\phi_0$ is injective. Therefore, $im(\partial_1)=0$ and $im(\psi_1)=ker(\partial_1)=H_1(W; \Z).$ So, $H_1(W; \Z)=im(\psi_1)\cong H_1(M_1; \mathbb{Z})\oplus H_1(M_2; \Z)/ker(\psi_1)=H_1(M_1; \mathbb{Z})\oplus H_1(M_2; \Z)/im(\phi_1)=\Z\oplus\Z/im(\phi_1).$ We have that $im(\phi_1)=\langle (\mu_1, -a\mu_2), (0, -c\mu_2)\rangle\cong \langle (1, -a), (0, -c)\rangle$. Consider the matrix $\big(\begin{smallmatrix}
  1 & 0\\
  -a & -c
\end{smallmatrix}\big)$. By adding $a$ times the first row to the second row we obtain  $\big(\begin{smallmatrix}
  1 & 0\\
  0 & -c
\end{smallmatrix}\big)$. Hence $im(\phi_1)=\langle (\mu_1, -a\mu_2), (0, -c\mu_2)\rangle\cong \langle (1, -a), (0, -c)\rangle\cong \langle (1,0), (0, -c)\rangle\cong\Z\oplus |c|\Z$, and so $$H_1(W; \Z)=im(\psi_1)\cong H_1(M_1; \mathbb{Z})\oplus H_1(M_2; \Z)/ker(\psi_1)=H_1(M_1; \mathbb{Z})\oplus H_1(M_2; \Z)/im(\phi_1),$$
with this final group being isomorphic to $\Z\oplus\Z/(\Z\oplus |c|\Z) \cong \Z_{|c|}.$
\end{proof}
If $W$ is a rational homology sphere graph manifold, we can construct a graph called the {\it splice diagram} $\Gamma(W)$ as follows: nodes are in one to one correspondence with the Seifert pieces of $W$.  Two nodes are connected by an edge if the corresponding Seifert fibered pieces are glued together along a common torus boundary component.  To each node, one attaches a leaf for each singular fibre of the corresponding Seifert fibred piece. 

We then decorate the graph so constructed as follows: each edge corresponding to a leaf is labeled with the multiplicity of the corresponding singular fibre. Let $v$ be a node of $\Gamma(W)$ and $e$ be an edge of $\Gamma(W)$ connecting $v$. Let $N$ and $K$ be the two pieces of $W$ obtained by cutting $W$ along the torus corresponding to $e$ such that $K$ does not contain $M_v$, where $M_v$ is the Seifert piece of $W$ corresponding to $v$. Let $D$ be the manifold obtained by Dehn filling $K$ with a solid torus $\mathbb{D}^2\times \mathbb{S}^1$ by identifying a regular fibre of $\partial K$ with a meridian of $\mathbb{D}^2\times \mathbb{S}^1$. Let $d_v=|H_1(D)|$, and take $d_v$ to be the label of the edge $e$ at the node $v$. We also decorate the nodes of $\Gamma(W)$ with signs $+$ or $-$ corresponding to the sign of the linking number of two non-singular fibers in the Seifert fibration (see \cite[Section 2]{Pe} for more details).

\begin{figure}[H]
\centering
\def\svgwidth{0.65\columnwidth}
 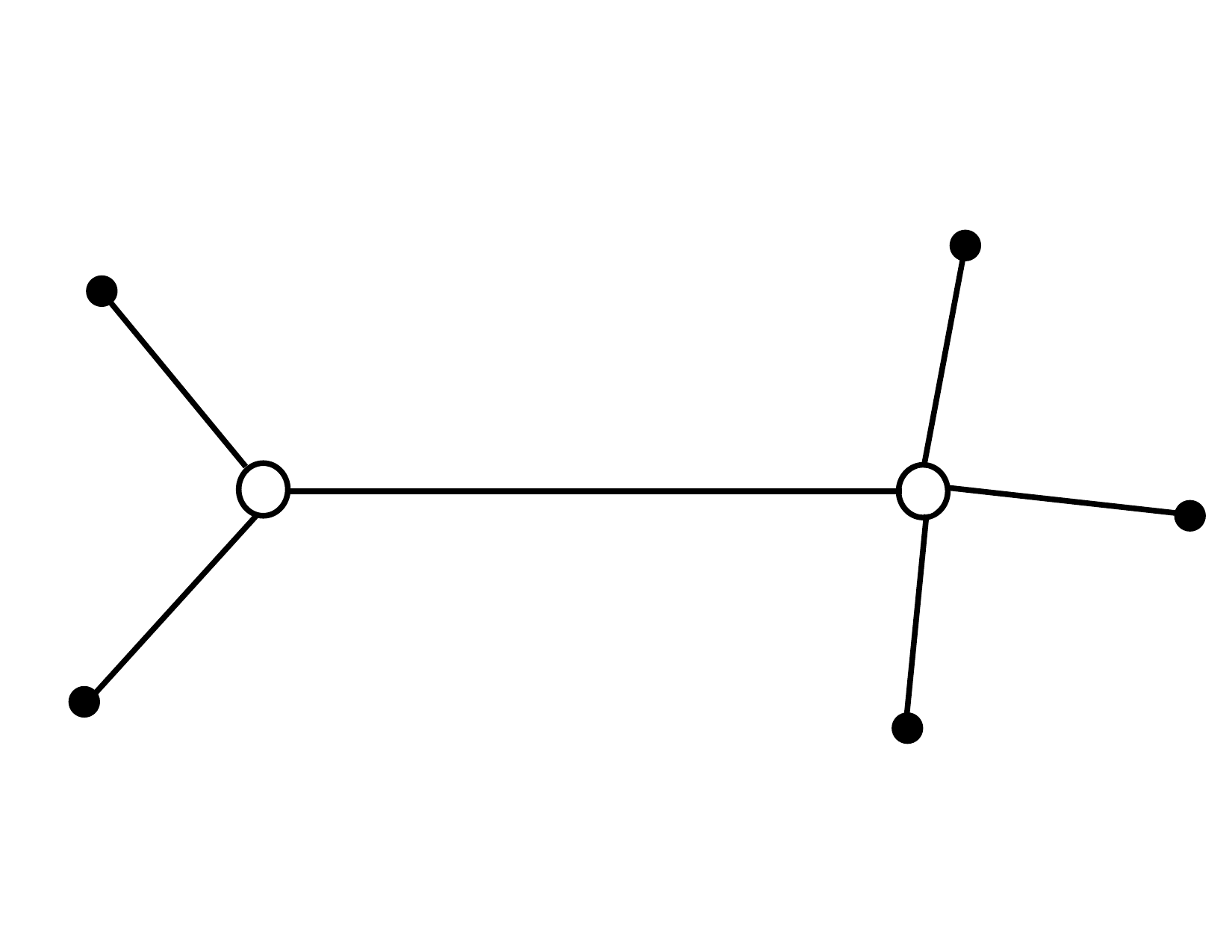
 \caption{Example of a splice diagram.}
\label{splice}
\end{figure}
\begin{proposition}\label{torusknot}
%Let $K_1$ and $K_2$ be two knots in some integer homology 3-spheres $W_1$ and $W_2$, whose exteriors
Suppose that $M_1$ and $M_2$ are Seifert fibred spaces with incompressible boundaries and base orbifolds $\mathbb{D}^2(a_1, \dots, a_s)$ and $\mathbb{D}^2(b_1, \dots, b_t)$ respectively, and that each is the exterior of a knot in an integer homology sphere.
 Let $W$ be a graph manifold obtained by gluing $M_1$ and $M_2$ along their torus boundaries by some orientation reversing homeomorphism. If $gcd(a_i, a_l)=1$, $gcd(b_j, b_k)=1$ $(1\leq i \neq l\leq s,\; 1\leq j\neq k \leq t)$ and $gcd(a_i, b_k) = 1$ for all $i = 1, \dots, s$ and $k = 1, \dots t$ then $\pi_1(W)$ is circularly orderable.
\end{proposition}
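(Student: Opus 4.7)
My plan is to apply Proposition \ref{rat_long_CO}(2), exploiting the fact that each $M_i$ is a knot exterior in an integer homology sphere, so its rational longitude $\lambda_{M_i}$ coincides with the ordinary knot longitude. As a first reduction, if $H_1(W;\mathbb{Z})$ is infinite then by Lemma \ref{cyclic} the first Betti number $b_1(W)>0$ and $\pi_1(W)$ is left-orderable by \cite[Theorem~3.2]{BRW}, hence circularly orderable. So I may assume that $W$ is a rational homology sphere. Taking $M_1$ as the first piece in Proposition \ref{rat_long_CO}(2), condition (ii) is automatic since $M_1$ is Seifert fibred with incompressible boundary, so the task reduces to verifying that $\pi_1(M_2(\phi_*(\lambda_{M_1})))$ is infinite and circularly orderable.

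If $\phi_*(\lambda_{M_1})$ coincides with the regular fibre slope on $\partial M_2$, then by \cite{Hei2} and \cite[Theorem~5.1]{DSh} the filling $M_2(\phi_*(\lambda_{M_1}))$ is a connected sum of lens spaces with fundamental group $\mathbb{Z}/b_1 * \cdots * \mathbb{Z}/b_t$, which is infinite (since incompressibility of $\partial M_2$ forces $t \geq 2$) and circularly orderable by Proposition \ref{free product}. Otherwise, the filling is Seifert fibred over $\mathbb{S}^2(b_1,\ldots,b_t,m)$, where $m \geq 1$ denotes the geometric intersection number of $\phi_*(\lambda_{M_1})$ with the fibre. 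Combining the orbifold Euler characteristic formula with the pairwise coprimality of the $b_k$ shows that the resulting fundamental group is infinite in all but a short list of exceptional configurations---essentially the lens-space case $m=1$ and the spherical triples $(2,3,m)$ for $m \in \{2,3,4,5\}$---and when infinite, Theorem \ref{SFCO} yields circular orderability, completing the argument via Proposition \ref{rat_long_CO}(2).

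When the $M_2$-side longitudinal filling falls into one of the exceptional finite cases, I run the symmetric argument with the roles of $M_1$ and $M_2$ reversed. Here the cross-coprimality $\gcd(a_i,b_k)=1$ is crucial: if $(b_1,b_2)=(2,3)$ then every $a_i$ must be coprime to $6$ and thus $\geq 5$, so no spherical triple $(2,3,m)$ can occur on the $M_1$ side, leaving at most the lens-space obstruction. The main obstacle I anticipate is the ``doubly bad'' residual case in which both longitudinal fillings yield lens spaces simultaneously (precisely the case $\Delta(\phi_*(\lambda_{M_1}),h_2) = \Delta(\phi^{-1}_*(\lambda_{M_2}),h_1) = 1$ not covered by Corollary \ref{2piecescase}); in this case I would fall back on Theorem \ref{CO slope}(2) with a carefully chosen non-longitudinal slope $\alpha$, using Theorem \ref{SFCO}(3)--(4) and Corollary \ref{match rot numbers} to construct circular orderings on the Seifert-fibred fillings $M_1(\alpha)$ and $M_2(\phi_*(\alpha))$ whose prescribed rotation numbers on the dual peripheral classes agree across the JSJ torus.
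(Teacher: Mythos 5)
Your first two steps are sound: when one of the two longitudinal fillings has infinite fundamental group, Proposition \ref{rat_long_CO}(2) applies exactly as you describe (the fibre-slope case giving a connected sum of lens spaces with $\pi_1 \cong \mathbb{Z}/b_1 * \cdots * \mathbb{Z}/b_t$, infinite because incompressibility forces $t\geq 2$). The gap is in the residual case where \emph{both} longitudinal fillings are finite. First, your characterization of that case is too optimistic: it is not only the doubly-lens-space situation $\Delta(\phi_*(\lambda_{M_1}),h_2)=\Delta(\phi^{-1}_*(\lambda_{M_2}),h_1)=1$. For instance with $(a_1,a_2)=(3,5)$ and $(b_1,b_2)=(2,7)$ all the coprimality hypotheses hold, yet a gluing with both intersection numbers equal to $2$ produces spherical base orbifolds $\mathbb{S}^2(2,3,5)$ and $\mathbb{S}^2(2,2,7)$ on the two sides, so both fillings are finite and neither is a lens space; your ``coprime to $6$ hence $\geq 5$'' argument only treats one sub-case. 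Second, and more seriously, your proposed fallback --- Theorem \ref{CO slope}(2) with a non-longitudinal slope $\alpha$, matching rotation numbers of the dual classes via Corollary \ref{match rot numbers} --- does not go through with the tools available. The whole reason the paper works with the rational longitude is Proposition \ref{rat_rot_num}: the class dual to $\lambda_M$ has infinite order in $H_1(M(\lambda_M))$, which yields a surjection onto $\mathbb{Z}$ and hence complete freedom in prescribing its rotation number. For a non-longitudinal $\alpha$ the dual class has finite order in homology and no such control is available; indeed the paper states explicitly that the case of two Seifert pieces each admitting finite fillings ``is out of reach of our current technology.'' So the final step of your plan is precisely the open difficulty, not a routine verification.

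The paper avoids all of this by a different global argument: the coprimality hypotheses mean the edge labels around each node of the splice diagram $\Gamma(W)$ are pairwise coprime, so by Pedersen's criterion the universal abelian cover of $W$ is an integer homology sphere graph manifold; hence the commutator subgroup $[\pi_1(W),\pi_1(W)]$ is left-orderable by \cite{CLW}. Since $H_1(W;\mathbb{Z})$ is cyclic by Lemma \ref{cyclic}, the short exact sequence $1\rightarrow [\pi_1(W),\pi_1(W)]\rightarrow \pi_1(W)\rightarrow H_1(W;\mathbb{Z})\rightarrow 1$ and Proposition \ref{sesprop}(2) give the circular ordering directly, with no case analysis on the fillings. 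If you want to salvage your approach, you would need either to prove that the doubly-finite configuration cannot occur under the stated hypotheses (it can), or to supply a genuinely new rotation-number control for non-longitudinal fillings.
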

\begin{proof}
%Both $M_1$ and $M_2$ have base orbifolds $\mathbb{D}^2(a_1, \cdots, a_s)$ and $\mathbb{D}^2(b_1, \cdots, b_t)$ respectively with $a_i\geq 2$ and $b_j\geq 2$ for any $1\leq i\leq s$ and $1\leq j\leq t$. Moreover, we have also that gcd$(a_i, a_k)=1$ and gcd$(b_j, b_l)=1$ for any $1\leq i\neq k\leq s$ and $1\leq j\neq l\leq t$. 
With the restrictions on $a_i$ and $b_k$ as in the statement of the theorem, the manifold $W$ has a corresponding splice diagram $\Gamma(W)$ with two nodes.  The edge labels around any node in the diagram are all pairwise coprime, so by \cite[Corollary 6.3]{Pe} the universal abelian cover of $W$ is an integer homology sphere graph manifold. Hence, the commutator group $[\pi_1(W), \pi_1(W)]$ is left-orderable by \cite{CLW}. Since $H_1(W)$ is cyclic by Lemma \ref{cyclic}, $\pi_1(W)$ is circularly orderable by Proposition \ref{sesprop}. 
\end{proof}

In particular, Proposition \ref{torusknot} applies to manifolds $W = M_1 \cup_{\phi} M_2$ where $M_i$ are torus knot exteriors whose cone points have relatively coprime orders.  Manifolds of this form are not completely covered by Corollary \ref{2piecescase} or Theorem \ref{graphCO}.  We can further deal with other special cases of interest not covered by these theorems.

\begin{proposition}
\label{ibundle_prop}
Suppose that $W = M_1 \cup_{\phi} M_2$ where each $M_i$ is a twisted $I$-bundle over the Klein bottle.  Then $\pi_1(W)$ is circularly orderable.
\end{proposition}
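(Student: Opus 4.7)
My plan is to exhibit an infinite, circularly orderable quotient of $\pi_1(W)$ and then invoke Proposition \ref{COS}(2). The quotient will be the infinite dihedral group $D_\infty \cong \mathbb{Z}/2 \ast \mathbb{Z}/2$, built out of the orientation characters on each Klein bottle piece.

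Recall that a twisted $I$-bundle $M_i$ over the Klein bottle deformation retracts onto its zero-section, yielding $\pi_1(M_i) \cong \pi_1(K) = \langle a_i, b_i \mid a_i b_i a_i^{-1} b_i \rangle$. The orientation character $w_i : \pi_1(M_i) \to \mathbb{Z}/2$ sending $a_i \mapsto 1$, $b_i \mapsto 0$ is a surjection with kernel $\langle a_i^2, b_i\rangle \cong \mathbb{Z}^2$. The key observation is that $\partial M_i$ is the orientation double cover of the zero-section Klein bottle, so the inclusion-induced map $\pi_1(\partial M_i) \hookrightarrow \pi_1(M_i)$ has image exactly $\ker(w_i)$. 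Consequently both $w_1$ and $w_2$ vanish on the peripheral subgroups, and the pair matches under the gluing isomorphism $\phi_\ast$ since both composites are trivial. By the universal property of the amalgamated free product they combine to a well-defined surjection
\[
w : \pi_1(W) \;\cong\; \pi_1(M_1) \ast_{\pi_1(\partial M_1) \cong \phi_\ast \pi_1(\partial M_2)} \pi_1(M_2) \;\twoheadrightarrow\; \mathbb{Z}/2 \ast \mathbb{Z}/2 \;\cong\; D_\infty.
\]
The target $D_\infty$ is infinite, and circularly orderable by Proposition \ref{free product}(2) since each factor $\mathbb{Z}/2$ is cyclic, hence circularly orderable by Proposition \ref{finite case}.

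To conclude via Proposition \ref{COS}(2) I need only check that $W$ is $\mathbb{P}^2$-irreducible. Because each $M_i$ is aspherical, the peripheral inclusion is injective, so the gluing torus is incompressible in $W$; gluing two irreducible orientable manifolds along an incompressible torus yields an irreducible orientable manifold, which is automatically $\mathbb{P}^2$-irreducible. The only substantive step is the identification of $\pi_1(\partial M_i)$ with $\ker(w_i)$ inside the Klein bottle group, which is standard for the twisted $I$-bundle over $K$; beyond this there is no real obstacle.
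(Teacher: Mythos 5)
Your proof is correct, but it takes a genuinely different route from the paper's. The paper argues entirely at the level of circularly ordered groups: it writes each Klein bottle group as an extension $1 \to \mathbb{Z}^2 \to K_i \to \mathbb{Z}/2\mathbb{Z} \to 1$ with the peripheral subgroup as kernel, builds lexicographic circular orderings on $K_1$ and $K_2$ in which these peripheral subgroups are convex and the gluing isomorphism is order-preserving (the left-orderings on the two copies of $\mathbb{Z}^2$ being free to choose), and then invokes the amalgamation criterion of \cite[Proposition 1.1]{CG}. You instead kill the peripheral subgroups outright: the two orientation characters $w_i$ vanish on the amalgamated $\mathbb{Z}^2$ and hence assemble into a surjection $\pi_1(W) \twoheadrightarrow \mathbb{Z}/2 \ast \mathbb{Z}/2 \cong D_\infty$, after which Proposition \ref{COS}(2) applies. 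Your identification of the peripheral image with $\ker(w_i) = \langle a_i^2, b_i\rangle$ is correct --- it matches the paper's subgroup $\langle a^2, ab\rangle$ under the change of presentation between $\langle a,b \mid a^2=b^2\rangle$ and $\langle a,b \mid aba^{-1}=b^{-1}\rangle$ --- and your check that $W$ is irreducible, hence $\mathbb{P}^2$-irreducible, is the standard innermost-disk argument and is genuinely needed, since Proposition \ref{COS}(2) requires it. What each approach buys: yours is shorter and reduces everything to Propositions \ref{COS}, \ref{free product} and \ref{finite case}, at the cost of importing $3$-manifold topology (irreducibility of $W$ and, through Proposition \ref{COS}, the local indicability of the kernel from \cite{BRW}); the paper's argument is purely algebraic, showing that the amalgam of two Klein bottle groups along their index-two $\mathbb{Z}^2$ subgroups is circularly orderable with no topological input, which is the sharper group-theoretic statement.
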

\begin{proof}
The fundamental group of $W$ is an amalgamated free product of two Klein bottle groups $K_1=\langle a, b\;|\; a^2=b^2\rangle$ and $K_2=\langle c, d\;|\; c^2=d^2\rangle$, whose peripheral subgroups are $\langle a^2, ab \rangle$ and $\langle c^2, cd\rangle$ respectively.  The amalgamation is with respect to an isomomorphism
$\phi :\langle a^2, ab \rangle\longrightarrow \langle c^2, cd\rangle$. Observe that $K_1$ admits a lexicographic circular ordering arising from the short exact sequence
\[ 1 \rightarrow \langle a^2, ab \rangle \rightarrow K_1 \rightarrow \mathbb{Z} /2 \mathbb{Z} \rightarrow 1,
\]
and that $K_2$ admits a similar lexicographic circular ordering, with the choice of left-ordering on the subgroup $\langle a^2, ab \rangle$ (resp. $\langle c^2, cd\rangle$) being arbitrary.  As such, we can construct circular orderings of $K_1$ and $K_2$ so that the homomorphism $\phi$ is order preserving, and the subgroups $\langle a^2, ab \rangle$, $\langle c^2, cd \rangle$ are convex.\footnote{Convexity here means that the quotient group inherits a circular ordering.  For a more general definition and discussion of convexity in the context of circular orderings, see \cite{CG}.} Then by \cite[Proposition 1.1]{CG}, $\pi_1(W)$ is circularly orderable.

\end{proof}

%\begin{corollary}
%\label{thmSol}
%The fundamental group of a compact, connected Sol manifold is circularly orderable. 
%\end{corollary}
\begin{proof}[Proof of Theorem \ref{thmSol}]
Suppose $M$ is such a manifold. By \cite[Theorem 1.7 (1)]{BRW}, if the boundary of $M$ is not empty, or $M$ is non-orientable, or $M$ is a torus bundle over the circle, then $\pi_1(M)$ is left-orderable.  Thus $\pi_1(M)$ is circularly orderable.  By \cite[Section 9]{BRW}, the only case which is left to check is when $M$ is orientable and the union of two twisted $I$-bundles over the Klein bottle $K$, which are glued together along their torus boundaries.  Therefore Proposition \ref{ibundle_prop} finishes the proof.
\end{proof}

It seems, however, that the special case of a graph manifold consisting of two Seifert fibred pieces, each admitting finite fillings, is out of reach of our current technology.  A general notion of ``slope detection by a circular ordering" is likely needed to deal with these last few cases, though our results thusfar contribute ample evidence for the truth of the following conjecture.

\begin{conjecture}
\label{graph conjecture}
Suppose $W$ is a rational homology sphere graph manifold.  If $\pi_1(W)$ is infinite, then it is circularly orderable. 
\end{conjecture}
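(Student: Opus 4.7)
The plan is to reduce Conjecture \ref{graph conjecture} to the single residual case pinpointed in the discussion preceding it: a graph manifold $W = M_1 \cup_\phi M_2$ whose JSJ decomposition consists of exactly two Seifert fibred pieces, each with base orbifold in the list $\mathcal{A}$, so that $\pi_1(M_1(\phi^{-1}_*(\lambda_{M_2})))$ and $\pi_1(M_2(\phi_*(\lambda_{M_1})))$ are both finite. Indeed, Theorem \ref{graphCO} handles $W \in \mathcal{C}$, and the inductive step used in its proof only fails when both rational-longitude fillings produce finite groups; every other piece of the argument (Lemma \ref{graph1}, Lemma \ref{reg_fibre}, Proposition \ref{rat_long_CO}(2)) goes through unchanged. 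So the entire conjecture hinges on understanding a finite catalogue of two-piece amalgamations, several subcases of which are already dispatched by Proposition \ref{torusknot} and Proposition \ref{ibundle_prop}.

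The central tool I would use is a strengthened version of Corollary \ref{match rot numbers}, developed into a genuine notion of \emph{slope detection by circular orderings}. For each Seifert piece $M_i$ and each slope $\alpha_i \subset \partial M_i$ that is not the regular fibre, define
\[
R(M_i,\alpha_i) \;=\; \bigl\{\, \mathrm{rot}_{c}(q_i(\mu_i)) \;\bigm|\; c \text{ is a circular ordering of } \pi_1(M_i(\alpha_i)) \,\bigr\} \subset \mathbb{R}/\mathbb{Z},
\]
where $\mu_i$ is a class dual to $\alpha_i$ and $q_i$ is the filling quotient. By Corollary \ref{match rot numbers}, $\pi_1(W)$ is circularly orderable as soon as one can choose a slope $\alpha$ on $\partial M_1$ with $R(M_1,\alpha) \cap R(M_2,\phi_*(\alpha)) \neq \emptyset$. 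First I would compute these realizable sets explicitly: for fillings that remain Seifert fibred with an orientable base and at least one exceptional fibre, Theorem \ref{SFCO}(3) shows the rotation numbers of a regular fibre sweep out $\{1/p : p \in \mathbb{N}_{>0}\}$, and one can then propagate this via the defining relations to the rotation number of $\mu_i$. The lexicographic machinery of Proposition \ref{sesprop}(2), together with Construction \ref{lift_construction}, lets one realize a large dense subset of $\mathbb{R}/\mathbb{Z}$ in $R(M_i,\alpha_i)$ whenever the filling leaves an infinite, left-orderable group.

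The hard part will be precisely the ``finite filling'' regime in which this density argument collapses. When $M_i$ has base orbifold in $\{\mathbb{D}^2(2,3,3),\mathbb{D}^2(2,3,4),\mathbb{D}^2(2,3,5)\}$, only finitely many slopes give infinite fillings, and for the exceptional finite-filling slopes Proposition \ref{finite case} forces $\pi_1(M_i(\alpha_i))$ to be cyclic for any hope of a circular ordering, which severely constrains $R(M_i,\alpha_i)$. In this regime I would switch strategies and invoke Theorem \ref{LO_covers}: rather than ordering $\pi_1(W)$ directly, construct a finite cyclic cover $\widetilde{W} \to W$ with left-orderable fundamental group. A natural candidate is the universal abelian cover, which for two-piece graph manifolds with the relevant splice diagrams is often an integer-homology-sphere graph manifold whose commutator subgroup was shown in \cite{CLW} and Proposition \ref{torusknot} to be left-orderable. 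The remaining technical step is to verify, case by case over the finite list of base orbifolds in $\mathcal{A}$ and the possible gluing matrices, that either the universal abelian cover (or an appropriate cyclic cover) is a graph manifold already lying in the class $\mathcal{C}$, or that its splice-diagram edge weights satisfy the coprimality condition needed to invoke \cite[Corollary 6.3]{Pe}; this is the principal obstacle I expect, since for some gluings the cyclic cover may itself again fall into the finite-filling regime and require further lifting.
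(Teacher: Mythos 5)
The statement you are trying to prove is stated in the paper as Conjecture \ref{graph conjecture}, not as a theorem: the authors explicitly say that the residual case of a graph manifold glued from two Seifert fibred pieces each admitting finite fillings ``is out of reach of our current technology,'' and they offer no proof. Your proposal correctly identifies the same reduction the paper makes (the remark after Theorem \ref{graphCO} and the discussion before Corollary \ref{2piecescase} confirm that only the two-piece case with both base orbifolds in $\mathcal{A}$ remains), but what you have written is a research program, not a proof, and the two strategies you sketch each have a concrete unfilled gap.

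First, the rotation-number-matching tool you want to strengthen cannot be applied as stated in the regime that matters: Theorem \ref{CO slope} and hence Corollary \ref{match rot numbers} require both filled manifolds $M_1(\a)$ and $M_2(\phi_*(\a))$ to have \emph{infinite} fundamental group, and the whole difficulty of the remaining case is that for pieces with base orbifold in $\{\mathbb{D}^2(2,3,3), \mathbb{D}^2(2,3,4), \mathbb{D}^2(2,3,5)\}$ and suitable gluings, every slope one would like to use (in particular the rational longitudes) can yield a finite filling on at least one side. Computing the sets $R(M_i,\alpha_i)$ does not help until you have a version of the detection theorem that tolerates finite cyclic quotients, and you do not supply one. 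Second, the fallback via Theorem \ref{LO_covers} and universal abelian covers is only verified in the paper under strong hypotheses (Proposition \ref{torusknot} needs the pieces to be knot exteriors in integer homology spheres and needs pairwise coprime multiplicities so that \cite[Corollary 6.3]{Pe} applies); for a general gluing of two pieces with base orbifolds in $\mathcal{A}$ the universal abelian cover need not be an integer homology sphere graph manifold, and you concede yourself that the cyclic cover ``may itself again fall into the finite-filling regime and require further lifting.'' That concession is exactly where a proof would have to do the work, so the conjecture remains open after your argument just as it does in the paper.
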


\section{cyclic branched covers and Dehn surgery}\label{sec3}

With respect to certain well-known geometric constructions, left-orderability is conjectured to exhibit certain predictable behaviours.  In this section we contrast the conjectured ``predictable behaviours" of left-orderability with the behaviour of circular orderability, which is strikingly different and whose expected behaviour at this time is completely unknown.

\subsection{Cyclic branched covers} We recall the standard construction of the cyclic covers and cyclic branched covers of a knot in $\mathbb{S}^3$ in order to establish notation.

Let $K$ be an oriented knot in $\mathbb{S}^3$. Let $M_K$ be the exterior of $K$ and $S$ be a Seifert surface for $K$.
Isotope $S$ so that $S\cap\partial M_K$ is a longitude of $K$ and
let $F=S\cap M_K$. Let $C$ be a tubular neighborhood of $F$ in $M_K$, so that $C$ is homeomorphic to $F\times [-1,1]$.

Set $Y=M_K\setminus F \times (-1,1)$.  The boundary of $Y$ contains two copies of $F$, which we denote by
${F}^{-}\cong F\times \{-1\}$ and ${F}^+\cong F\times \{1\}$ of $F$, use these to create a triple $(Y,F^+,F^-)$. Consider $n$ copies of 
this triple, denoted by $(Y_i,F_i^+,F_i^-)$, $i=0,\dots, n-1$, and glue them together by identifying $F_0^+\subset Y_0$ with
$F_1^-\subset Y_1$, $F_1^+\subset Y_1$ with $F_2^-\subset Y_2$, $\dots$,  $F_{n-2}^+\subset Y_{n-2}$ with  
$F_{n-1}^-\subset Y_{n-1}$ and $F_{n-1}^+\subset Y_{n-1}$ with $F_0^-\subset Y_0$. Call the resulting space $X_n$. 

There is a regular covering map $g:X_n\longrightarrow M_K$ and its 
group of deck transformations is isomorphic to $\mathbb{Z}/n\mathbb{Z}$. 
The manifold $X_n$ is called the $n$-fold cyclic cover of $M_K$ and its 
fundamental group is isomorphic to $ker(\pi_1(M_K)\longrightarrow \mathbb{Z}/n\mathbb{Z})$. 
To construct the $n$-fold cyclic branched
cover $\Sigma_n(K)$, we glue a solid torus $V\cong D^2\times \mathbb{S}^1$ to $Y_n$ by identifying the meridian
$\partial D^2\times \{1\}$ of $V$ with the preimage of the meridian $\mu$ of $\partial M_K$  under the map $g:X_n\longrightarrow M_K$.
The manifold $\Sigma_n(K)$ that results is a closed, oriented $3$-manifold. 
For any $n\in \N$, let $q_n: X_n\longrightarrow \Sigma_n(K)$ be the inclusion map. The map $q_n$ induces a homomorphism $(q_n)_*:  \pi_1(X_n)\longrightarrow \pi_1(\Sigma_n(K))$ and ${\rm ker}(q_n)_*=\langle\langle \mu^n \rangle\rangle.$ Therefore we have a short exact sequence $1\longrightarrow \langle\langle \mu^n \rangle\rangle\longrightarrow\pi_1(X_n)\longrightarrow \pi_1(\Sigma_n(K))\longrightarrow 1$, which identifies the fundamental group of $\Sigma_n(K)$ as the quotient of a certain subgroup of the knot group $\pi_1(M_K)$. 

If $L\subset \mathbb{S}^3$ is an oriented link, then the $n$-fold cyclic branched cover of $L$, $\Sigma_n(L)$, can be also constructed see \cite{BBG}.

Set $$\mathcal{LO}_{br}(K) = \{ n \geq 2 \mid \pi_1(\Sigma_n(K)) \mbox{ is left-orderable} \}$$ and $$\mathcal{CO}_{br}(K) = \{ n \geq 2 \mid \pi_1(\Sigma_n(K)) \mbox{ is circularly orderable} \}.$$ Note that $\mathcal{LO}_{br}(K) \subset \mathcal{CO}_{br}(K)$.

Motivated by the L-space conjecture, in \cite[Question 1.8]{Tu} and \cite{BBG}, the authors ask whether or not the set $\mathcal{LO}_{br}(K)$ is always of the form $\{ n \mid n \geq N\}$ for some $N \geq 2$, or empty.  In contrast, circular orderability does not behave this way, with this behaviour being evident upon examining the torus knots.  For example, considering the trefoil, we have the following.

\begin{proposition}
\label{trefoil prop}
With notation as above, $\mathcal{CO}_{br}(3_1) = \{2 \} \cup \{ n \mid n \geq 6\}$.
\end{proposition}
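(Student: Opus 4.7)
The plan is a case analysis based on the fact that $\Sigma_n(3_1)$ is the Brieskorn manifold $\Sigma(2,3,n)$ (since the trefoil is the torus knot $T(2,3)$). This manifold is Seifert fibered over the base orbifold $\S^2(2,3,n)$, and the dichotomy between finite and infinite fundamental group is governed by the sign of $\tfrac{1}{2} + \tfrac{1}{3} + \tfrac{1}{n} - 1$, which is positive for $n \leq 5$ and non-positive for $n \geq 6$.

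For $n \geq 6$, $\pi_1(\Sigma_n(3_1))$ is infinite and $\Sigma_n(3_1)$ is Seifert fibered, so Theorem \ref{SFCO}(1) immediately yields that $\pi_1(\Sigma_n(3_1))$ is circularly orderable. For $n = 2$, one has $\Sigma_2(3_1) = L(3,1)$ (verified for instance via $|H_1| = |\Delta_{3_1}(-1)| = 3$), so $\pi_1 \cong \Z/3\Z$ is cyclic and thus circularly orderable, giving $2 \in \mathcal{CO}_{br}(3_1)$.

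It remains to treat $n \in \{3,4,5\}$. Using the torus-knot presentation $\pi_1(\S^3 \setminus 3_1) = \langle a, b \mid a^2 = b^3\rangle$ with meridian $\mu = ab^{-1}$, the group $\pi_1(\Sigma_n(3_1))$ is realized as the commutator subgroup of the finite orbifold quotient $\langle a, b \mid a^2 = b^3, \mu^n = 1\rangle$. A classical computation identifies this commutator subgroup as the quaternion group $Q_8$ when $n = 3$, as the binary tetrahedral group $\widetilde{T}$ of order $24$ when $n = 4$, and as the binary icosahedral group $\widetilde{I}$ of order $120$ when $n = 5$ (the Poincar\'e homology sphere). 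All three are finite and non-cyclic, so by Proposition \ref{finite case} none is circularly orderable, giving $\{3,4,5\} \cap \mathcal{CO}_{br}(3_1) = \emptyset$. As a consistency check, their abelianizations have orders $4, 3, 1$, matching $|H_1(\Sigma_n(3_1))| = \bigl|\prod_{k=1}^{n-1}\Delta_{3_1}(e^{2\pi i k/n})\bigr|$ computed from $\Delta_{3_1}(t) = t^2 - t + 1$.

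The main obstacle is the identification of the three finite groups in the range $n \in \{3,4,5\}$, which requires either a direct coset enumeration of the orbifold quotient or an appeal to the classification of spherical Seifert fibered space forms; notably one must verify that the central element $z = a^2 = b^3$ does not collapse in $\langle a, b \mid a^2 = b^3, \mu^n = 1\rangle$ for $n \geq 3$ (whereas it does collapse for $n = 2$, which is why the $n = 2$ quotient is merely $S_3$). With these identifications in hand the four cases combine to give $\mathcal{CO}_{br}(3_1) = \{2\} \cup \{n \mid n \geq 6\}$.
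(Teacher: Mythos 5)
Your proof is correct and follows essentially the same route as the paper: both handle $n=2$ via $\Sigma_2(3_1)=L(3,1)$, and both exclude $n=3,4,5$ by identifying $\pi_1(\Sigma_n(3_1))$ with the quaternion group, the binary tetrahedral group, and the binary icosahedral group and invoking Proposition \ref{finite case}. The only divergence is the case $n\geq 6$: the paper cites Gordon--Lidman to conclude the stronger statement that these $n$ lie in $\mathcal{LO}_{br}(3_1)$, whereas you use the Seifert fibred structure of the Brieskorn manifold $\Sigma(2,3,n)$ and its infinite fundamental group together with Theorem \ref{SFCO}(1), which is equally valid (and self-contained within the paper) but yields only circular orderability.
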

\begin{proof}
The double branched cover of $3_1$ is the lens space $L(3,1)$, so its fundamental group is circularly orderable.  On the other hand, the $3, 4$, and $5$-fold branched cyclic covers of $3_1$ have fundamental group the quaternion group, the binary tetrahedal group, and the binary icosahedral group respectively, all of which are finite and non-cyclic.  It follows from Proposition \ref{finite case} that none of these groups are circularly orderable.  For $n\geq 6$ we know that $n \in \mathcal{LO}_{br}(3_1) \subset \mathcal{CO}_{br}(3_1)$ by \cite[Theorem 1.2(i)]{GL}.
\end{proof}

This behaviour is not confined to torus knots.

\begin{proposition}
\label{52 prop}
 With notation as above, 
$$ 2 \in \mathcal{CO}_{br}(5_2) \mbox{ and } 3 \notin \mathcal{CO}_{br}(5_2), \mbox{ and } \{ n \mid n \geq 9 \}\subset  \mathcal{CO}_{br}(5_2).$$

\end{proposition}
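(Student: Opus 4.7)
The plan is to handle each of the three claims separately, since each value of $n$ corresponds to a rather different geometric situation for $\Sigma_n(5_2)$.

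For the case $n=2$: I would use that $5_2$ is a two-bridge knot with Alexander polynomial $2t^2 - 3t + 2$, so its determinant is $|\Delta_{5_2}(-1)| = 7$. It follows that $\Sigma_2(5_2)$ is a lens space $L(7,q)$ for the appropriate $q$ (this can be read off from the standard Goeritz matrix computation, or more directly from the two-bridge classification of $5_2$). Since $\pi_1(L(7,q)) \cong \mathbb{Z}/7\mathbb{Z}$ is finite cyclic, it is circularly orderable, giving $2 \in \mathcal{CO}_{br}(5_2)$.

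For the case $n=3$: I would begin by obtaining a presentation of $\pi_1(\Sigma_3(5_2))$ from the Wirtinger presentation of the knot group by passing through the short exact sequence $1 \to \langle\langle \mu^n \rangle\rangle \to \pi_1(X_n) \to \pi_1(\Sigma_n(K)) \to 1$ constructed earlier in this section. A Fox-calculus computation gives $|H_1(\Sigma_3(5_2);\mathbb{Z})| = |\Delta_{5_2}(\omega)\Delta_{5_2}(\bar\omega)| = 25$, where $\omega$ is a primitive cube root of unity, so the resulting group has non-cyclic abelianization ($\mathbb{Z}/25\mathbb{Z}$ or $\mathbb{Z}/5\mathbb{Z}\oplus\mathbb{Z}/5\mathbb{Z}$). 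The key step is then to verify that $\pi_1(\Sigma_3(5_2))$ is in fact finite, at which point Proposition \ref{finite case} immediately yields $3 \notin \mathcal{CO}_{br}(5_2)$. If finiteness fails, one would instead apply Theorem \ref{eulerclassorder} to reduce non-circular-orderability to obstructing left-orderability of a finite-index normal subgroup, in the style of \cite{CaD}.

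For the case $n \geq 9$: I would invoke a left-orderability result for the cyclic branched covers of $5_2$, analogous to what was used in the proof of Proposition \ref{trefoil prop} for $3_1$. Existing techniques in the vein of \cite{Hu,Tr,Tu,GL} construct one-parameter families of $\mathrm{PSL}(2,\mathbb{R})$-representations of the knot group that descend through the branched covers, giving left-orderability of $\pi_1(\Sigma_n(5_2))$ for $n$ past the threshold $n = 9$. Since left-orderable groups are circularly orderable, this yields the inclusion $\{n \mid n \geq 9\} \subset \mathcal{CO}_{br}(5_2)$.

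The main obstacle is the $n = 3$ case. The existence of a circular ordering cannot be ruled out directly from homological information, so the burden lies in either establishing finiteness of $\pi_1(\Sigma_3(5_2))$ by a careful group-theoretic analysis of the presentation above, or, failing that, producing an Euler-class-based obstruction via Theorem \ref{eulerclassorder}, which in practice typically requires computer assistance.
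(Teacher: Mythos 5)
Your $n=2$ and $n\geq 9$ cases match the paper: $\Sigma_2(5_2)$ is the lens space $L(7,4)$ with cyclic fundamental group, and the $n\geq 9$ inclusion follows from left-orderability of $\pi_1(\Sigma_n(5_2))$ for $n\geq 9$ due to Hu \cite{Hu}, exactly as in the proof of Proposition \ref{trefoil prop}.

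The $n=3$ case, however, contains a genuine gap, and your primary strategy there cannot succeed. The group $\pi_1(\Sigma_3(5_2))$ is \emph{infinite}: $\Sigma_3(5_2)$ is a closed hyperbolic $3$-manifold --- in fact it is homeomorphic to the Weeks manifold by the main result of Mednykh--Vesnin \cite{MeV} --- so its fundamental group is a cocompact lattice in $\mathrm{PSL}(2,\mathbb{C})$ and the finiteness you hope to establish is false. (Your homology computation $|H_1(\Sigma_3(5_2))|=25$ is correct, and indeed $H_1 \cong \mathbb{Z}/5\oplus\mathbb{Z}/5$, but a non-cyclic finite abelianization does not obstruct circular orderability of an infinite group.) Your fallback --- an Euler-class obstruction ``in the style of \cite{CaD}'' --- is the right idea in spirit, but you leave it entirely unexecuted, and carrying it out from scratch is the hard content of Calegari--Dunfield's Theorem 9 in \cite{CaD}, which required enumerating the finitely many index-$\leq 5$ (and related) subgroups and obstructing left-orderability of each. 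The missing ingredient is precisely the identification $\Sigma_3(5_2)\cong W$ (the Weeks manifold) together with the already-known theorem of \cite{CaD} that $\pi_1(W)$ is not circularly orderable; once you quote these two results the $n=3$ case is immediate, and without them your argument does not close.
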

\begin{proof}
The knot $5_2$ is a two-bridge knot, corresponding to the fraction $7/4$.  As such, its double branched cover is the lens space $L(7,4)$ having fundamental group $\mathbb{Z}/7\mathbb{Z}$ and is therefore circularly orderable.

On the other hand, the fundamental group of the Weeks manifold $W$ is not circularly orderable by \cite[Theorem 9.2]{CaD}, yet $W$ is homeomorphic to $\Sigma_3(5_2)$ by \cite[Main result]{MeV}.

Last, $n \in \mathcal{LO}_{br}(5_2) \subset \mathcal{CO}_{br}(5_2)$ for all $n \geq 9$ by \cite{Hu}.
\end{proof}

There are also examples of knots for which $\mathcal{LO}_{br}(K)$ is empty.  Notable examples are the two-bridge knots $p/q = 2m+\frac{1}{2k}$, or $L_{[2k, 2m]}$ $(k, m>0)$ in Conway's notation \cite{DPT}.  However, for these knots $\mathcal{CO}_{br}(K)$ is never empty, because the double branched cover is the lens space $L(p,q)$ for which $\pi_1(L(p,q)) = \mathbb{Z}/p\mathbb{Z}$, and is thus circularly orderable.   Indeed, for the figure eight knot $4_1$, which is $L_{[2,2]}$, we can show that infinitely many of the cyclic branched covers have circularly orderable fundamental group.  We first require a proposition.

\begin{proposition} 
\label{divisible covers}
Let $K$ be a prime knot in $\mathbb{S}^3$. If $n \geq2$ and $\pi_1(\Sigma_n(K))$ is circularly orderable and infinite, then $\pi_1(\Sigma_m(K))$ is circularly orderable for all $m$ divisible by $n$.
\end{proposition}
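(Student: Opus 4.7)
The approach is to exhibit a surjection $\pi_1(\Sigma_m(K)) \twoheadrightarrow \pi_1(\Sigma_n(K))$ and then apply Proposition \ref{COS}(2). First, since $K$ is a prime knot and $m \geq 2$, the branched cover $\Sigma_m(K)$ is irreducible by a standard application of the equivariant sphere theorem to the cyclic deck action (any essential sphere could be made equivariant and would descend to an essential sphere in $\S^3$ contradicting primality of $K$). As $\Sigma_m(K)$ is orientable, it is therefore $\P^2$-irreducible, verifying the hypothesis of Proposition \ref{COS}.

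Next, write $m = kn$. The unbranched cyclic covers $X_n$ and $X_m$ of $M_K$ correspond to the kernels of the canonical maps $\pi_1(M_K) \to \Z/n\Z$ and $\pi_1(M_K) \to \Z/m\Z$; since the first factors through the second, $\pi_1(X_m)$ sits inside $\pi_1(X_n)$ as an index-$k$ subgroup. Explicitly, the quotient $\pi_1(X_n)/\pi_1(X_m) \cong \Z/k\Z$ is generated by the class of $\mu^n$, so that $\pi_1(X_n) = \pi_1(X_m) \cdot \langle \mu^n \rangle$ as sets.

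I would then consider the composition
\[
\pi_1(X_m) \hookrightarrow \pi_1(X_n) \twoheadrightarrow \pi_1(X_n)/\langle\langle \mu^n \rangle\rangle = \pi_1(\Sigma_n(K)).
\]
Surjectivity is immediate from the coset decomposition above and the fact that $\mu^n$ is killed in the target. For the map to descend to $\pi_1(\Sigma_m(K)) = \pi_1(X_m)/\langle\langle \mu^m \rangle\rangle$, it suffices to note that $\mu^m = (\mu^n)^k$ lies in $\pi_1(X_m)$ and maps to the identity in $\pi_1(\Sigma_n(K))$; hence the normal closure $\langle\langle \mu^m \rangle\rangle$ inside $\pi_1(X_m)$ lies in the kernel. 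This produces the desired surjection $\pi_1(\Sigma_m(K)) \twoheadrightarrow \pi_1(\Sigma_n(K))$.

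Finally, since $\pi_1(\Sigma_n(K))$ is infinite and circularly orderable by hypothesis, and $\Sigma_m(K)$ is $\P^2$-irreducible, Proposition \ref{COS}(2) applied to this epimorphism yields that $\pi_1(\Sigma_m(K))$ is circularly orderable. The only delicate input is the $\P^2$-irreducibility of $\Sigma_m(K)$, which is precisely where the primality hypothesis on $K$ is used; the rest is elementary bookkeeping with the two short exact sequences defining $\pi_1(\Sigma_n(K))$ and $\pi_1(\Sigma_m(K))$.
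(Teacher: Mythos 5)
Your proof is correct and follows the same strategy as the paper: produce an epimorphism $\pi_1(\Sigma_m(K)) \twoheadrightarrow \pi_1(\Sigma_n(K))$ and apply Proposition \ref{COS}(2), with primality of $K$ entering only through irreducibility (hence $\P^2$-irreducibility) of $\Sigma_m(K)$. The only difference is that you construct the surjection and argue irreducibility by hand, whereas the paper simply cites \cite[Lemma 2.11]{GL} for the former and \cite{Plo} for the latter; your explicit coset argument with $\pi_1(X_m)\subset\pi_1(X_n)$ and the class $\mu^n$ is a correct unwinding of the Gordon--Lidman lemma in this setting.
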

\begin{proof}
By \cite[Lemma 2.11]{GL}, there exists a surjective group homomorphism $$q_{m,n} : \pi_1(\Sigma_m(K)) \longrightarrow \pi_1(\Sigma_n(K))$$ for any positive integer $m$ divisible by $n$.  By \cite{Plo}, the manifolds $\Sigma_m(K)$ are irreducible and so Proposition \ref{COS}(2) applies, we conclude that $\pi_1(\Sigma_m(K))$ is circularly orderable.
\end{proof}

\begin{proposition}
If $n$ is divisible by $3$, then $\pi_1(\Sigma_n(4_1))$ is circularly orderable.  In particular,  $ 3 \mathbb{N} \subset \mathcal{CO}_{br}(4_1)$.
\end{proposition}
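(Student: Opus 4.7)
The plan is to reduce immediately to the single case $n=3$ via Proposition \ref{divisible covers}: once we know that $\pi_1(\Sigma_3(4_1))$ is infinite and circularly orderable, then for every $n$ with $3 \mid n$ we get a surjection $\pi_1(\Sigma_n(4_1)) \twoheadrightarrow \pi_1(\Sigma_3(4_1))$, and Proposition \ref{divisible covers} delivers circular orderability of $\pi_1(\Sigma_n(4_1))$ directly. So the entire content of the statement is concentrated in understanding the $3$-fold branched cover.

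Next, I would identify $\Sigma_3(4_1)$ explicitly. The figure-eight knot is hyperbolic, and it is a classical fact coming from the hyperbolic Dehn surgery analysis of $4_1$ (equivalently, from Thurston's orbifold deformation argument on the $4_1$ cone manifolds, or from direct computation with the two-bridge presentation) that the $3$-fold cyclic branched cover of $4_1$ sits at the Euclidean point: $\Sigma_3(4_1)$ is a closed orientable flat $3$-manifold. In particular, every closed orientable flat $3$-manifold is Seifert fibered, so $\Sigma_3(4_1)$ is a Seifert fibered space. Moreover, the fundamental group of any closed flat $3$-manifold is virtually $\mathbb{Z}^3$ and therefore infinite; this can also be seen directly by computing $H_1(\Sigma_3(4_1);\mathbb{Z}) \cong (\mathbb{Z}/4\mathbb{Z})^2$ from the Alexander polynomial $\Delta_{4_1}(t) = t^2-3t+1$, combined with the observation that no spherical Seifert fibered space has that first homology, so the Seifert piece must be of Euclidean (hence infinite) type.

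With $\Sigma_3(4_1)$ recognized as an infinite Seifert fibered manifold, Theorem \ref{SFCO}(1) applies and yields that $\pi_1(\Sigma_3(4_1))$ is circularly orderable. Combined with Proposition \ref{divisible covers}, this gives $\pi_1(\Sigma_n(4_1))$ circularly orderable for every $n \in 3\mathbb{N}$, so $3\mathbb{N} \subset \mathcal{CO}_{br}(4_1)$.

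The only nontrivial step is the geometric identification of $\Sigma_3(4_1)$. If one prefers to avoid quoting the Euclidean identification, an alternative route is to use the two-bridge presentation $4_1 = L_{[2,2]}$ and a Reidemeister–Schreier calculation to write $\pi_1(\Sigma_3(4_1))$ as a quotient of the commutator subgroup of the figure-eight knot group, then exhibit an epimorphism onto an infinite Seifert or poly-$\mathbb{Z}$ group and appeal to Proposition \ref{COS}(2); but the cleanest path is the Euclidean identification followed by Theorem \ref{SFCO}(1).
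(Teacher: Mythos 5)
Your argument is essentially identical to the paper's: the paper also identifies $\Sigma_3(4_1)$ as the Hantzsche--Wendt manifold (the flat manifold you describe), notes it is Seifert fibred with infinite fundamental group, applies Theorem \ref{SFCO}, and then invokes Proposition \ref{divisible covers}. Your additional remarks (the homology computation $H_1 \cong (\mathbb{Z}/4\mathbb{Z})^2$ and the Reidemeister--Schreier alternative) are correct but not needed beyond what the paper does.
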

\begin{proof}
The manifold $\Sigma_3(4_1)$ is homeomorphic to the Hantzsche-Wendt manifold, which is a Seifert fibred manifold with infinite fundamental group.  Therefore $\pi_1(\Sigma_3(4_1))$ is circularly orderable by Theorem \ref{SFCO}.  The result now follows from Proposition \ref{divisible covers}.
\end{proof}

These observations naturally lead to the following questions.

\begin{question}
What subsets of $\mathbb{N}$ can occur as $\mathcal{CO}_{br}(K)$ for a knot $K$ in $\mathbb{S}^3$?
\end{question}

\begin{question}
Is there a knot $K$ in $\mathbb{S}^3$ for which $\mathcal{CO}_{br}(K) = \emptyset$?
\end{question}

\subsection{Double branched covers}\label{sec4}
In this section we study the circularly orderability of the double branched cover of links, in particular, the case of double branched covers of alternating links. We start with the the observation that, if the L-space conjecture is true, then the fundamental group of the double branched cover of a quasi-alternating knot is never left-orderable (the double branched cover of a quasi-alternating link is an L-space \cite{OSz}).  

In contrast to this, there are alternating Montesinos links whose double branched cover are prism manifolds by \cite{JR, BHMNOV}, so their fundamental groups are not circularly orderable by Proposition \ref{finite case}.  Similarly, it turns out that the Weeks manifold is homeomorphic to $\Sigma_2(9_{49})$ \cite[Main result]{MeV}, and so the double branched cover of $9_{49}$ has non-circularly orderable fundamental group.  Yet $9_{49}$ is a quasi-alternating knot \cite{Jab}.

On the other hand, there are many examples of alternating and quasi-alternating knots whose double branched covers do have circularly orderable fundamental groups, as the next two results show.

\subsubsection{Generalized Fibonacci groups} Let $M(k, m)$ denote the double branched cover of the alternating link which is the closure of the $3$-strand braid $(\s_1^k\s_2^{-k})^m$, where $m$ and $k$ are positive integers. By \cite[p.169]{MaR}, the fundamental group of $M(k, m)$ is isomorphic to the Generalized Fibonacci group 
$$F^k_{2m}=\langle x_1, \dots, x_{2m}\; | \; x_ix_{i+1}^k=x_{i+2}, \; {\rm for \; any}\; i=1,\dots, 2m \rangle,$$
where the indices  are taken mod $2m$.
\begin{proposition}
For any $k\geq 2$, the fundamental group of $M(k,m)$ is circularly orderable.
\end{proposition}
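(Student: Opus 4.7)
My plan is to exhibit a surjection from $F^k_{2m}$ onto an infinite circularly orderable group, and then invoke Proposition \ref{COS}(2) to transport circular orderability back to $F^k_{2m}\cong \pi_1(M(k,m))$. The natural target group is the free product $G = \mathbb{Z}/k\mathbb{Z} \ast \mathbb{Z}/k\mathbb{Z} = \langle a, b \mid a^k = b^k = 1\rangle$, which for $k\geq 2$ is infinite (any nontrivial free product of nontrivial groups is infinite, and here $G$ is in fact virtually free) and circularly orderable by Proposition \ref{free product}(2), since each cyclic factor is trivially so.

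Next, I would define a homomorphism $\phi : F^k_{2m} \to G$ by sending $x_i \mapsto a$ when $i$ is odd and $x_i \mapsto b$ when $i$ is even. Because $2m$ is even, this parity-based assignment is well-defined modulo $2m$, so it is consistent at the two wrap-around relations $x_{2m-1}x_{2m}^k = x_1$ and $x_{2m}x_1^k = x_2$ as well as all the other relations. To check that $\phi$ respects each relation $x_i x_{i+1}^k = x_{i+2}$, observe that for $i$ odd both sides map to $a b^k = a$, while for $i$ even both sides map to $b a^k = b$; in each case the verification reduces to $a^k = b^k = 1$, which holds in $G$ by construction. The map is clearly surjective since both $a$ and $b$ lie in its image.

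Finally, I would apply Proposition \ref{COS}(2) to conclude. To do so, one needs $M(k,m)$ to be $\mathbb{P}^2$-irreducible; since $M(k,m)$ is closed and orientable (as a double branched cover of $\mathbb{S}^3$ over a link), this reduces to ordinary irreducibility, which in turn follows from the fact that the alternating link $(\sigma_1^k\sigma_2^{-k})^m$ is prime and non-split for $k\geq 2$, together with the classical result that double branched covers of prime non-split links are irreducible. Granting this, Proposition \ref{COS}(2) applied to the epimorphism $\phi$ yields circular orderability of $\pi_1(M(k,m))$.

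The main obstacle I anticipate is verifying primeness (and hence irreducibility of $M(k,m)$) uniformly across all $k \geq 2$ and $m\geq 1$, particularly at small values such as $(k,m)=(2,1)$ where the link is a three-component chain. If any sporadic case resists a direct primeness argument, the fallback is to pass to the prime decomposition of $M(k,m)$, whereby $\pi_1(M(k,m))$ becomes a free product of the fundamental groups of the prime summands, and then apply Proposition \ref{free product}(2) after verifying circular orderability on each factor separately.
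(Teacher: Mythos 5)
Your proposal is correct and follows essentially the same route as the paper: the same parity-defined epimorphism $F^k_{2m}\twoheadrightarrow \mathbb{Z}/k\mathbb{Z}\ast\mathbb{Z}/k\mathbb{Z}$ onto an infinite circularly orderable free product, followed by Proposition \ref{COS}(2). The only difference is in justifying irreducibility of $M(k,m)$, where the paper simply cites Maclachlan--Reid (Lemma 6 and p.~171 of \cite{MaR}) rather than arguing primeness of the link directly, which sidesteps the case-checking you were worried about.
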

\begin{proof}
For any $k\geq 2$, $M(k, m)$ is irreducible by \cite[Lemma 6 and p.171]{MaR}. Moreover we can define an epimorphism $\r: F_{2m}^k\longrightarrow \mathbb{Z}_k\ast\mathbb{Z}_k$ by $\r(x_{2i})\longmapsto x$ and $\r(x_{2i+1})\longmapsto y$, where $x$ and $y$ are the generators of $\mathbb{Z}_k\ast\mathbb{Z}_k$. Since $\mathbb{Z}_k\ast\mathbb{Z}_k$ is circularly orderable by Proposition \ref{free product}(2), by Proposition \ref{COS}(2) $\pi_1(M(k, m))$ is circularly orderable.
\end{proof}
\begin{remark}
 For $k\geq 2$ and $m=2$, the manifolds $M(k,2)$ are obtained by identifying two Seifert fibred spaces along a common torus boundary \cite[p.171]{MaR}. Thus $M(k,2)$ is a graph manifold. On the other hand for $k\geq 2$ and $m\geq 3$, the manifolds $M(k, m)$ are irreducible, Haken and atoroidal Hyperbolic 3-manifolds by \cite[Lemma 6]{MaR}. 
\end{remark}

\subsubsection{Generalized Takahashi manifolds}
Fix two positive integers $n, m$ and a collection of integers $\{p_{k,j}, q_{k,j}, r_{k,j}, s_{k,j}\}$ satisfying $gcd(p_{k,j}, q_{k,j}) = 1$, $gcd(r_{k,j}, s_{k,j}) = 1$ , and $p_{k,j}, r_{k,j} \geq 0$ for all $1\leq k\leq n$ and $1\leq j \leq m$. The generalized Takahashi manifold $T_{n,m}(\frac{p_{k,j}}{q_{k,j}}; \frac{r_{k,j}}{s_{k,j}})$ is the double branched cover of $\mathbb{S}^3$, branched over the closure of the braid appearing in Figure \ref{fig1} \cite[Theorem 3]{MV}, first defined in \cite[Section 2]{MV}.  We will denote the closure of this braid by $L_{n,m}(\frac{p_{k,j}}{q_{k,j}}; \frac{r_{k,j}}{s_{k,j}})$.

\begin{figure}[H]
	\centering
	\def\svgwidth{0.70\columnwidth}
	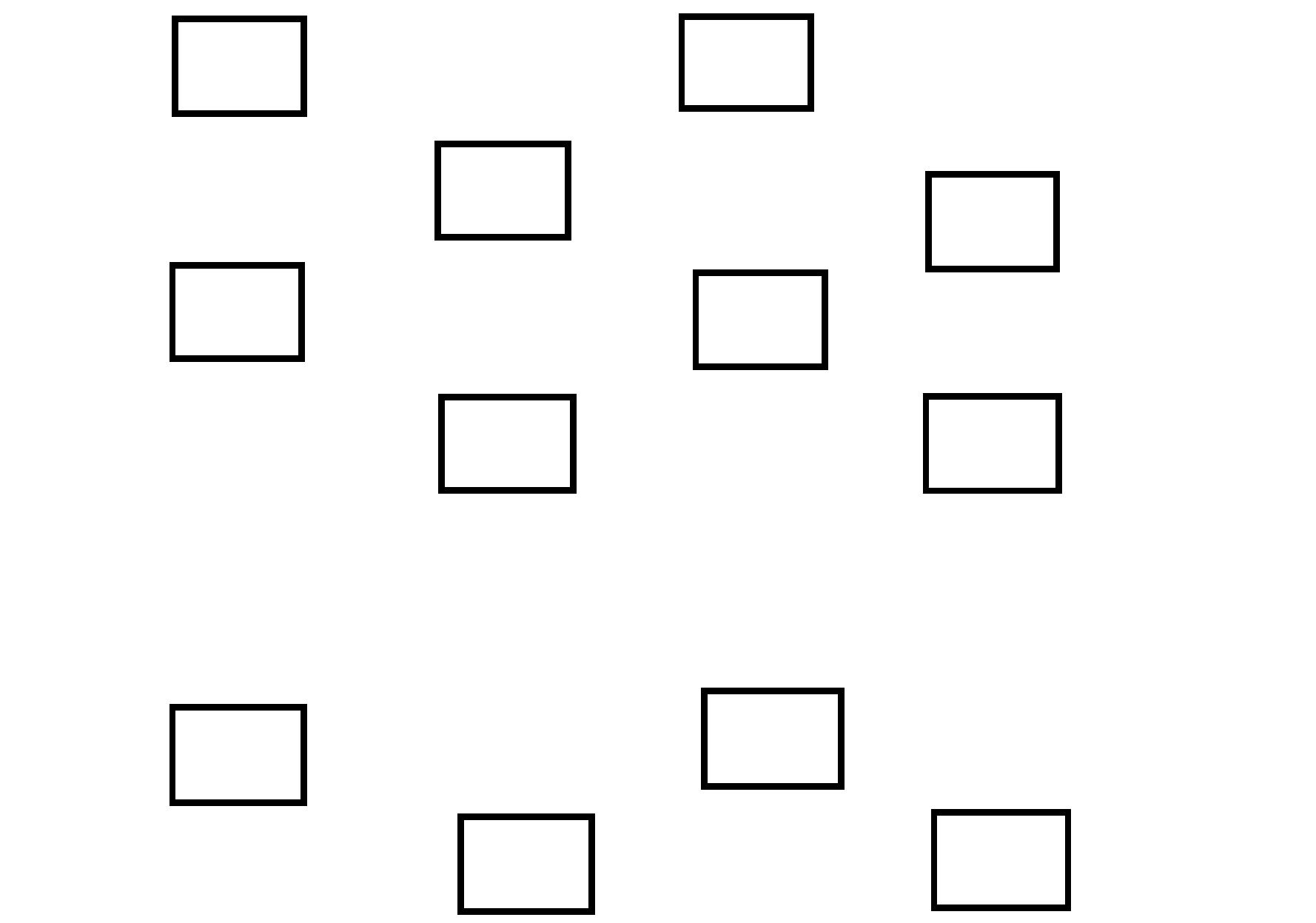
	\caption{The braid that defines the generalized Takahashi manifold $T_{n,m}(\frac{p_{k,j}}{q_{k,j}}; \frac{r_{k,j}}{s_{k,j}})$.  The fraction used to label each box determines the rational tangle used in that box to create $L_{n,m}(\frac{p_{k,j}}{q_{k,j}}; \frac{r_{k,j}}{s_{k,j}})$.}
	\label{fig1}
\end{figure}

This family of manifolds contains many well know $3$-manifolds, such as all $n$-fold cyclic branched covers of $2$-bridge knots, the Weeks manifold, and some graph manifolds. When the $p_{k,j}=p_j$, $q_{k,j}=q_j$, $r_{k,j}=r_j$ and $s_{k,j}=s_j$ for all $1\leq k\leq n$ and $1\leq j \leq m$, the manifold $T_{n,m}(\frac{p_{j}}{q_{j}}; \frac{r_{j}}{s_{j}})$ is called generalized periodic Takahashi manifold, correspondingly it is the double branched cover of the link $L_{n,m}(\frac{p_{j}}{q_{j}}; \frac{r_{j}}{s_{j}})$.

%\begin{theorem}{\rm \cite[Theorem 3]{MV}} The generalized Takahashi manifold $T_{n,m}(\frac{p_{k,j}}{q_{k,j}}; \frac{r_{k,j}}{s_{k,j}})$ is the double branched cover of $\mathbb{S}^3$ branched over the link $L_{n,m}(\frac{p_{k,j}}{q_{k,j}}; \frac{r_{k,j}}{s_{k,j}})$.
%\end{theorem}
Each generalized periodic Takahashi manifold can also be viewed as a cyclic branched cover over a knot in a connected sum of lens spaces.

\begin{theorem}{\rm \cite[Theorem 6]{MV}}\label{T1} The generalized periodic Takahashi manifold $T_{n,m}(\frac{p_{j}}{q_{j}}; \frac{r_{j}}{s_{j}})$ is the $n$-fold cyclic branched cover of the connected sum of $2m$ lens spaces $$L(p_1,q_1)\# L(r_1, s_1)\# \cdots \# L(p_m,q_m)\# L(r_m, s_m)$$ branched over a knot which does not depend on $n$.
\end{theorem}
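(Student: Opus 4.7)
The plan is to exploit the manifest $\mathbb{Z}/n\mathbb{Z}$ cyclic symmetry of the defining braid of $L_{n,m}(\frac{p_{j}}{q_{j}}; \frac{r_{j}}{s_{j}})$, which is present precisely because in the periodic case the tangle labels do not depend on the column index $k$. Concretely, cyclic rotation of the $n$ columns of $2m$ rational tangles gives an orientation-preserving homeomorphism $\varphi: S^3 \to S^3$ of order $n$ preserving $L_{n,m}$ setwise. First I would realize $\varphi$ as a standard rotation whose fixed set is an unknotted axis $A \subset S^3$ disjoint from $L_{n,m}$; this is automatic from the positive resolution of the Smith conjecture, which guarantees $S^3/\langle\varphi\rangle \cong S^3$ and that the axis is unknotted.

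Next I would identify the quotient pair $(S^3,L_{n,m})/\langle\varphi\rangle = (S^3, \bar L)$. A fundamental domain for $\varphi$ consists of a single ``column'' of $2m$ rational tangles, and the image $\bar L$ of $L_{n,m}$ is obtained from this column by closing it up around the image $\bar A$ of the axis. The resulting diagram $\bar L$ is a chain of $2m$ rational $(p_j/q_j)$-- and $(r_j/s_j)$--tangles joined end-to-end by trivial connecting arcs around $\bar A$. Applying the classical Montesinos description of double branched covers of sums of rational tangles, each rational $(p/q)$-tangle lifts to a solid torus of slope $p/q$, and the trivial connecting arcs lift to $2$-spheres that split the cover as a connected sum. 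Thus the double branched cover of $S^3$ over $\bar L$ is exactly
\[ L(p_1,q_1) \# L(r_1,s_1) \# \cdots \# L(p_m,q_m) \# L(r_m,s_m). \]

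Now I would lift the symmetry to the double branched cover. Writing $p: T_{n,m}(\frac{p_j}{q_j};\frac{r_j}{s_j}) \to S^3$ for the double branched cover over $L_{n,m}$, the action $\varphi$ lifts to an order-$n$ homeomorphism $\tilde\varphi$ commuting with the covering involution. The induced commutative diagram of branched covers $T_{n,m} \to T_{n,m}/\langle\tilde\varphi\rangle \to S^3/\langle\varphi\rangle = S^3$ identifies the middle space with the double branched cover of $S^3$ over $\bar L$, which by the previous paragraph is the desired connected sum of lens spaces. The map $T_{n,m} \to T_{n,m}/\langle\tilde\varphi\rangle$ is then an $n$-fold cyclic branched cover whose branch set is the preimage $\tilde A := p^{-1}(A)$; since $A$ is disjoint from $L_{n,m}$ and linked with it so that the generator $[A] \in H_1(S^3 \setminus L_{n,m}; \mathbb{Z}/2)$ is nonzero, $\tilde A$ is a single knot $K$ in the connected sum of lens spaces, and because $\bar L$ and $A$ depend only on the fundamental domain, $K$ is manifestly independent of $n$.

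The main obstacle will be the second step: verifying that the double branched cover of the quotient link $\bar L$ is precisely the claimed connected sum of lens spaces, rather than some more complicated graph manifold arising from the plumbing of solid tori. This reduces to checking that the trivial connecting arcs produced by collapsing the fundamental domain each lift to essential $2$-spheres in the double branched cover; the cleanest route is a direct computation in the tangle calculus together with the standard handlebody description of double branched covers, followed by verifying the orientation conventions so that the lens space slopes $p_j/q_j$ and $r_j/s_j$ appear with the claimed signs.
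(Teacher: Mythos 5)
First, a point of comparison: the paper does not prove this statement at all. Theorem \ref{T1} is quoted verbatim from Mulazzani--Vesnin \cite[Theorem 6]{MV}, so there is no internal argument to measure your proposal against; you are attempting to reprove the cited result. Your overall strategy --- exploit the visible $\mathbb{Z}/n\mathbb{Z}$ periodicity of the defining braid, quotient $(\mathbb{S}^3, L_{n,m})$ by the rotation, identify the double branched cover of the quotient link, and lift the rotation to $T_{n,m}$ so that the intermediate quotient realizes the $n$-fold cyclic branched covering --- is the standard one and is essentially the route of \cite{MV}, who work with the cyclically symmetric chain-link surgery description of these manifolds rather than directly with the branch link, but use the same symmetry.

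As written, though, the proposal has two genuine gaps. The decisive one is the step you yourself flag as the main obstacle: the claim that the double branched cover of the quotient link $\bar L$ is $L(p_1,q_1)\#L(r_1,s_1)\#\cdots\#L(p_m,q_m)\#L(r_m,s_m)$. Collapsing one column of $2m$ rational tangles and closing it up around the image of the axis produces, a priori, a closed ``necklace'' of rational tangles, and the double branched cover of such a necklace is in general a Seifert fibred or graph manifold, not a connected sum of lens spaces. To get the connected sum you must exhibit, between consecutive tangles, separating $2$-spheres meeting $\bar L$ in exactly two points, i.e.\ show that $\bar L$ is literally a connected sum of $2m$ two-bridge links; this depends on the precise combinatorics of Figure \ref{fig1} and is the actual mathematical content of the theorem, which the sketch does not establish. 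Second, the lifting argument needs care: you must verify that $\varphi$ admits a lift $\tilde\varphi$ of order exactly $n$ (rather than $2n$) whose fixed set is all of $p^{-1}(A)$, that $p^{-1}(A)$ is connected (your parity-of-linking claim is asserted, not computed from the braid --- this is what makes the branch set a knot rather than a link), and that the intermediate quotient $T_{n,m}/\langle\tilde\varphi\rangle \rightarrow \mathbb{S}^3$ is branched over $\bar L$ alone and not additionally over the image of the axis. All of this is checkable and the statement is true, but none of it is automatic, so the proposal is a correct plan rather than a proof.
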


We use this as follows.

\begin{theorem}
The fundamental group of any generalized periodic Takahashi manifold \\ $T_{n,m}(\frac{p_j}{q_j}; \frac{r_j}{s_j})$ is circularly orderable if the set $\{p_j, r_j \mid \; 1\leq j \leq m\}$ contains at least two elements different from $1$, and the link $L_{n,m}(\frac{p_{j}}{q_{j}}; \frac{r_{j}}{s_{j}})$ is prime.
\end{theorem}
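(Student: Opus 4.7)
The plan is to invoke Theorem~\ref{T1}, which realises $T := T_{n,m}(\tfrac{p_j}{q_j}; \tfrac{r_j}{s_j})$ as the $n$-fold cyclic branched cover of $Y := L(p_1,q_1)\#L(r_1,s_1)\#\cdots\#L(p_m,q_m)\#L(r_m,s_m)$ over a knot $K \subset Y$, and then apply Proposition~\ref{COS}(2) to a surjection $\pi_1(T) \twoheadrightarrow \pi_1(Y)$. By Van Kampen's theorem
\[
\pi_1(Y) \cong \mathbb{Z}/p_1\mathbb{Z} * \mathbb{Z}/r_1\mathbb{Z} * \cdots * \mathbb{Z}/p_m\mathbb{Z} * \mathbb{Z}/r_m\mathbb{Z},
\]
after deleting any trivial factors coming from indices with $p_j = 1$ or $r_j = 1$. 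The hypothesis on $\{p_j, r_j\}$ guarantees that at least two nontrivial finite cyclic factors survive, so $\pi_1(Y)$ is infinite; since each finite cyclic group is circularly orderable, Proposition~\ref{free product}(2) implies that $\pi_1(Y)$ is circularly orderable.

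To produce the desired epimorphism, I would mimic the construction recalled at the start of Section~\ref{sec3}. Let $M_K = Y \setminus \nu(K)$, let $\mu$ denote the meridian of $K$, and let $X_n$ be the $n$-fold cyclic cover of $M_K$ corresponding to the kernel of the composition $\pi_1(M_K) \twoheadrightarrow H_1(M_K; \mathbb{Z}) \twoheadrightarrow \mathbb{Z}/n\mathbb{Z}$, so that $\pi_1(T) \cong \pi_1(X_n)/\langle\langle \mu^n \rangle\rangle$. Separately, the inclusion $M_K \hookrightarrow Y$ induces a surjection $\pi_1(M_K) \twoheadrightarrow \pi_1(Y)$ with kernel $\langle\langle \mu\rangle\rangle$. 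Its restriction to $\pi_1(X_n)$ is still surjective: given $g \in \pi_1(Y)$ with lift $\widetilde g \in \pi_1(M_K)$ mapping to $k \in \mathbb{Z}/n\mathbb{Z}$, the element $\widetilde g\, \mu^{-k}$ lies in $\pi_1(X_n)$ and still maps to $g$. Because $\mu$, and hence $\mu^n$, dies in $\pi_1(Y)$, this restriction descends to an epimorphism $\pi_1(T) \twoheadrightarrow \pi_1(Y)$.

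Finally, since $L_{n,m}(\tfrac{p_j}{q_j}; \tfrac{r_j}{s_j})$ is prime, its double branched cover $T$ is irreducible by the standard fact that the double branched cover of a prime link in $\mathbb{S}^3$ is irreducible; as $T$ is orientable, it is in fact $\mathbb{P}^2$-irreducible. Combining this with the epimorphism $\pi_1(T) \twoheadrightarrow \pi_1(Y)$ onto the infinite circularly orderable group $\pi_1(Y)$, Proposition~\ref{COS}(2) yields that $\pi_1(T)$ is circularly orderable.

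The only nontrivial steps beyond citing Theorem~\ref{T1}, Proposition~\ref{free product}(2), and Proposition~\ref{COS}(2) are the verification that the restriction $\pi_1(X_n) \twoheadrightarrow \pi_1(Y)$ is surjective (handled by the $\mu^{-k}$ trick above) and the appeal to $\mathbb{P}^2$-irreducibility from primality of $L_{n,m}$; I expect the latter to be the main place where care is required, since \emph{prime} must be understood in a sense that also excludes split links so that the double branched cover is genuinely irreducible.
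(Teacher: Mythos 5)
Your proposal is correct and follows essentially the same route as the paper: the authors likewise deduce irreducibility of $T_{n,m}(\frac{p_j}{q_j};\frac{r_j}{s_j})$ from primeness of the link (citing the equivariant sphere theorem and the Smith conjecture for the ``standard fact'' you invoke), obtain the epimorphism onto the infinite, circularly orderable free product $\Z_{p_1}\ast\Z_{r_1}\ast\cdots\ast\Z_{p_m}\ast\Z_{r_m}$ from Theorem \ref{T1} together with \cite[Lemma 2.11]{GL}, and conclude via Proposition \ref{COS}(2). Your explicit $\widetilde{g}\,\mu^{-k}$ argument is simply an unpacking of that cited lemma, and your closing caveat about excluding split links is exactly the care the irreducibility step requires.
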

\begin{proof} Since the link $L_{n,m}(\frac{p_{j}}{q_{j}}; \frac{r_{j}}{s_{j}})$ is prime, $T_{n,m}(\frac{p_j}{q_j}; \frac{r_j}{s_j})$ is irreducible by the equivariant sphere theorem \cite{MSY} and the positive answer of the Smith conjecture \cite{MB}.
By Theorem \ref{T1} and \cite[Lemma 2.11]{GL}, there exists a surjective homomorphism from the fundamental group of the generalized periodic Takahashi manifold $\pi_1(T_{n,m}(\frac{p_{j}}{q_{j}}; \frac{r_{j}}{s_{j}}))$ to the free product
$\Z_{p_1}\ast \Z_{r_1}\ast \cdots \ast \Z_{p_m} \ast \Z_{r_m}$. Therefore, if the set $\{p_j, r_j \mid \; 1\leq j \leq m\}$ contains at least two elements different from $1$, this free product is infinite, and hence by Proposition \ref{COS}, $\pi_1(T_{n,m}(\frac{p_{j}}{q_{j}}; \frac{r_{j}}{s_{j}}))$ is circularly orderable.
\end{proof}
\begin{remark}
{\rm 

     The family of generalized Fibonacci manifolds is a subfamily of the family of generalized periodic Takahashi manifolds.
   
}
\end{remark}

\begin{question}
Is it possible to characterize the knots $K \subset \mathbb{S}^3$ for which $\pi_1(\Sigma_2(K))$ is circularly orderable?
\end{question}

\subsection{Dehn surgery}

In this brief section, we point out that circular orderability of manifolds arising from Dehn surgery on a knot in an integer homology $3$-sphere has already appeared in the literature under a different guise, from which we already observe different behaviour than left-orderability with respect to Dehn surgery.  

Recall that for a knot $K$ in an irreducible integer homology $3$-sphere $M$, the result of $p/q$ Dehn surgery on $M$ is denoted by $M_{p/q}(K)$.  The L-space conjecture predicts that for a knot $K$ in $\mathbb{S}^3$, if $\pi_1(\mathbb{S}^3_{p/q}(K))$ is non-left-orderable for some $p/q >0$, then in fact $\pi_1(\mathbb{S}^3_{p/q}(K))$ is non-left-orderable precisely when $p/q \geq 2g(K) -1$, where $g(K)$ is the genus of $K$ (\cite[Proposition 2.1]{OSz2} and \cite[Theorem 1]{R}) 

 To contrast this with circular orderability, we first observe that in light of Theorem \ref{LO_covers}:

\begin{proposition}
Suppose that $M$ is compact, connected, $\P^2$-irreducible $3$-manifold and that $H_1(M; \mathbb{Z})$ is cyclic.  Then $\pi_1(M)$ is circularly orderable if and only if the universal abelian cover of $M$ has left-orderable fundamental group.
\end{proposition}

Consequently, for an irreducible integer homology $3$-sphere $M$, since we have $H_1(M_{p/q}(K) ; \mathbb{Z} ) \cong \mathbb{Z} / p \mathbb{Z}$, we precisely need to investigate the universal abelian covers of these manifolds in order to know whether or not their fundamental groups are circularly orderable.  This is precisely what is done in \cite{BH}.

When $K$ is fibred, we use $h$ to denote its monodromy and $c(h)$ the fractional Dehn twist coefficient of $h$, see \cite[Section 4]{BH} for details. 

\begin{theorem}[\cite{BH}]
\label{boyerhu}
Suppose that $K$ is a fibred hyperbolic knot in an irreducible integer homology $3$-sphere $M$.  Given coprime $p, q$ with $p \geq 1$, the universal abelian cover of $M_{p/q}(K)$ has left-orderable fundamental group whenever 
\begin{enumerate}
\item $pc(h) \in \mathbb{Z}$ and $q \neq pc(h)$, and
\item $pc(h) \notin \mathbb{Z}$ and $q \notin \{ \lfloor pc(h) \rfloor, \lfloor pc(h) \rfloor + 1 \}$.
\end{enumerate}
\end{theorem}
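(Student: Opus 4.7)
The plan is to prove this by exploiting the mapping torus structure provided by a fibred knot and the pseudo-Anosov monodromy arising from hyperbolicity, then constructing a left-ordering on the universal abelian cover via a circle action with controlled rotation numbers. Since $K$ is hyperbolic and fibred, Thurston's theory gives that the monodromy $h : F \to F$ of the fibre surface $F$ is pseudo-Anosov, so the knot exterior $X = M \setminus \nu(K)$ is homeomorphic to the mapping torus $M_h$, with $\pi_1(X) \cong \pi_1(F) \rtimes_{h_*} \mathbb{Z}$ generated by $\pi_1(F)$ and a stable letter $t$ whose image is a class dual to the meridian. Since $M$ is an integer homology sphere, $H_1(M_{p/q}(K); \mathbb{Z}) \cong \mathbb{Z}/p\mathbb{Z}$, so the universal abelian cover $\widetilde{M_{p/q}(K)}$ corresponds to the kernel of the composition $\pi_1(X) \to \mathbb{Z} \to \mathbb{Z}/p\mathbb{Z}$ (which sends the meridian to $1$), followed by Dehn filling along the lift of the $p/q$ slope.

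Next I would describe this cover concretely: the $p$-fold cover of $X$ corresponding to the mod $p$ reduction of the meridian is the mapping torus $M_{h^p}$, and the preimage of the $p/q$ filling slope consists of $\gcd(p,q) = 1$ component(s) whose slope, measured against the lifted longitude, is determined by $q$ and the framing of the fibre. The fractional Dehn twist coefficient $c(h)$ is precisely the quantity that measures the discrepancy between the monodromy's action on $\partial F$ and a horizontal foliation in the fibred framing; equivalently, if one considers the lift $\widetilde{h} : \widetilde{F} \to \widetilde{F}$ to the universal cover that matches the pseudo-Anosov suspension, then $c(h)$ records the translation length of the boundary action. I would then invoke the universal circle construction of Thurston/Calegari-Dunfield for the pseudo-Anosov flow on $X$, which produces a faithful action $\rho : \pi_1(X) \to \mathrm{Homeo}_+(S^1)$ whose Euler class can be computed in terms of the fibred framing.

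The crux of the argument is that under $\rho$, the rotation number of the meridian $\mu$ equals $c(h) \bmod \mathbb{Z}$; this is the central computation tying the geometry to the dynamics and it should be extracted from Fenley's or Calegari's analysis of universal circles of pseudo-Anosov flows. Lifting $\rho$ to the $p$-fold cover gives an action of $\pi_1(M_{h^p})$ on $S^1$ whose Euler class is $p$ times the original; after filling, the filled class $\mu^p \lambda^q$ has rotation number determined by $c(h^p) = pc(h)$ and $q$. When $pc(h) \in \mathbb{Z}$ and $q \neq pc(h)$, the rotation number is nonzero and rational (with denominator dividing the relevant quantity), so one can deform or pass to a further finite cover to realize a trivial rotation number for the filled slope; when $pc(h) \notin \mathbb{Z}$ and $q$ avoids the two integers straddling $pc(h)$, the filled slope again acts with nontrivial rotation, and the induced quotient action on $S^1$ has trivial Euler class, allowing a lift to $\mathrm{Homeo}_+(\mathbb{R})$ via Theorem \ref{eulerclassorder} applied to the quotient. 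The resulting $\mathbb{R}$-action gives the desired left-ordering on $\pi_1(\widetilde{M_{p/q}(K)})$.

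The main obstacle is the precise identification of the rotation number of the meridian with $c(h)$ and, after taking the $p$-fold cover and filling, the verification that the excluded values of $q$ are exactly those where the Euler class obstruction becomes non-torsion or where the filled slope is forced to act as the identity on $S^1$ (destroying faithfulness of any lift). The boundary cases $q = pc(h)$ (integer case) and $q \in \{\lfloor pc(h) \rfloor, \lfloor pc(h) \rfloor + 1\}$ (non-integer case) are exactly the slopes where the filled curve becomes degenerate in the universal circle picture, so the exclusion is sharp rather than an artifact of the method.
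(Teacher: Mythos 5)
First, a point of comparison: the paper does not prove Theorem \ref{boyerhu} at all --- it is imported verbatim from Boyer--Hu \cite{BH} --- so there is no internal argument to measure yours against. Judged on its own terms, your sketch correctly identifies the ingredients of the Boyer--Hu circle of ideas (fibredness, pseudo-Anosov monodromy via Thurston, the fractional Dehn twist coefficient as the quantity controlling admissible slopes, and an eventual Euler class computation on the $p$-fold cyclic cover), but it is not a proof, for two concrete reasons.

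The step you yourself call ``the crux'' --- that the meridian has rotation number $c(h) \bmod \mathbb{Z}$ under a universal circle representation, and that the excluded values of $q$ are exactly those where the lifting obstruction survives after filling --- is asserted rather than proved, and deferred to ``Fenley's or Calegari's analysis.'' That identification \emph{is} the content of the theorem; without it the sketch is circular. Moreover, the argument in \cite{BH} does not run through the universal circle of the suspension flow on the knot exterior: it constructs co-oriented taut foliations on the filled manifold directly (via Roberts-type branched-surface constructions from the fibration, which exist precisely when the surgery slope avoids the interval determined by $pc(h)$), transfers them to the cyclic cover, and performs the Euler class computation there. Second, your lifting step is wrong as stated: Theorem \ref{eulerclassorder} of this paper converts a circular ordering with torsion Euler class into a left-orderable \emph{normal subgroup} with finite cyclic quotient; it does not produce a lift of a circle action to $\mathrm{Homeo}_+(\mathbb{R})$. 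To left-order $\pi_1$ of the universal abelian cover itself you must show the Euler class of the restricted representation vanishes integrally in $H^2$ of that cover, and ``deform or pass to a further finite cover'' does not achieve this --- a further cover would only left-order the fundamental group of a different manifold. Finally, your closing claim that the excluded slopes are ``sharp'' is not part of the theorem and is unsupported; the statement is only a sufficient condition.
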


Consequently, we observe that for any fibred knot $K$ in an irreducible integer homology $3$-sphere $M$, the result of $p/q$ surgery is a manifold with circularly orderable fundamental group whenever the surgery coefficient $p/q$ satisfies either condition (1) or (2) of Theorem \ref{boyerhu}.   

\begin{question}
Fix a knot $K$ in an irreducible integer homology $3$-sphere $M$.  Is it true that the set $\{ p/q  \mid M_{p/q}(K) \mbox{ is not circularly orderable} \}$ is always finite?
\end{question}

\end{document}